\documentclass[a4paper,11pt,oneside,openright,reqno]{amsart}
\usepackage[english]{babel}
\usepackage[colorlinks,citecolor=green,linkcolor=red]{hyperref}
\usepackage{amsthm}
\usepackage{color}
\usepackage{mathrsfs}
\usepackage{amsmath}
\usepackage{mathtools}
\usepackage{amsfonts}
\usepackage{amssymb}
\usepackage{accents}
\usepackage{bm}
\usepackage{physics}
\usepackage{enumitem}
\usepackage[margin=0.7in]{geometry}
\usepackage{cleveref}
\usepackage{esint}
\usepackage{nicefrac}
\usepackage{yfonts}
\usepackage{xcolor}

\numberwithin{equation}{section}

\newtheorem{thm}{Theorem}[section]
\newtheorem*{thm*}{Theorem}
\newtheorem{lem}[thm]{Lemma}
\newtheorem{prop}[thm]{Proposition}

\theoremstyle{definition}
\newtheorem{defn}[thm]{Definition}
\theoremstyle{remark}
\newtheorem{rem}[thm]{Remark}
\newtheorem{ex}[thm]{Example}

\newcommand{\fr}{}

\def\Xint#1{\mathchoice
	{\XXint\displaystyle\textstyle{#1}}%
	{\XXint\textstyle\scriptstyle{#1}}%
	{\XXint\scriptstyle\scriptscriptstyle{#1}}%
	{\XXint\scriptscriptstyle\scriptscriptstyle{#1}}%
	\!\int}
\def\XXint#1#2#3{{\setbox0=\hbox{$#1{#2#3}{\int}$}
		\vcenter{\hbox{$#2#3$}}\kern-.5\wd0}}
\def\dashint{\Xint-}

\newcommand{\mres}{\mathbin{\vrule height 1.6ex depth 0pt width 0.13ex\vrule height 0.13ex depth 0pt width 1.3ex}}
\newcommand{\mressmall}{\mathbin{\vrule height 1.2ex depth 0pt width 0.11ex\vrule height 0.10ex depth 0pt width 1.0ex}}

\let\mathrm\operatorname
\newcommand{\RCD}{{\mathrm {RCD}}}

\newcommand{\XX}{X}%{{\mathsf{X}}}
%{{\mathsf{Y}}}
\newcommand{\ZZ}{Z}%{{\mathsf{Z}}}
\newcommand{\mass}{{\mathsf{m}}}

\newcommand{\LIP}{{\mathrm {LIP}}}
\newcommand{\BV}{{\mathrm {BV}}}

\newcommand{\RR}{\mathbb{R}}
\newcommand{\NN}{\mathbb{N}}

\newcommand{\HH}{\mathcal{H}}
\newcommand{\LL}{\mathcal{L}}

\newcommand{\sign}{\mathrm {sign}}
\newcommand{\hess}{\mathrm{Hess}}

\newcommand{\defeq}{\vcentcolon=}
\newcommand{\eqdef}{=\vcentcolon}

\let\oldchi\chi
\renewcommand{\chi}{\text{\raisebox{\depth}{\(\oldchi\)}}}
\let\phi\varphi
\let\epsilon\varepsilon
\let\bar\overline
\renewcommand{\det}{\operatorname{det}}
\let\big\relax
\let\bigg\Big

\let\lft\Big
\let\rgt\Big

\DeclareMathOperator*{\esssup}{ess\,sup}

\DeclareMathOperator\supp{supp}
\title{Lower Ricci Curvature Bounds and the Orientability of Spaces}
%\author{Camillo Brena \and Elia\and Alessandro}
\author{Camillo Brena}
\address{Institute for Advanced Study. Einstein Drive 1, 08540 Princeton, New Jersey}
\email{cbrena@ias.edu}

\author{Elia Bruè}
\address{Bocconi University, Department of Decision Sciences. Via Sarfatti 25, 20136 Milano}
\email{elia.brue@unibocconi.it}

\author{Alessandro  Pigati}
\address{Bocconi University, Department of Decision Sciences. Via Sarfatti 25, 20136 Milano}
\email{alessandro.pigati@unibocconi.it}

\begin{document}

\raggedbottom

\begin{abstract}
    We study orientability in spaces with Ricci curvature bounded below. Building on the theory developed by Honda in \cite{HondaOrient}, we establish equivalent characterizations of orientability for Ricci limit and $\RCD$ spaces in terms of the orientability of their manifold part. We prove a new stability theorem and, as a corollary, we deduce that four-manifolds with Ricci curvature bounded below and volume non-collapsing  are uniformly locally orientable. As a global counterpart of the latter, we show that four-manifolds with nonnegative Ricci curvature and Euclidean volume growth are orientable.
\end{abstract}
\maketitle

	\tableofcontents

\section{Introduction}

We extend the theory of orientability on nonsmooth spaces with Ricci curvature bounded below, originally introduced by Honda in \cite{HondaOrient} for Ricci limit spaces. Our contributions advance the theory in three key directions:
\begin{itemize}
    \item[(i)] a characterization of orientability in terms of the \emph{effective manifold part} of the space;
    \item[(ii)] an analysis of the \emph{stability of non-orientability} with respect to Gromov--Hausdorff convergence;
    \item[(iii)] the construction of a \emph{ramified orientable double cover} for non-orientable spaces, along with a discussion of its main properties, within the setting of Ricci limit spaces.
\end{itemize}

As an application of our theory and the recent results in \cite{brupisem}, we prove that four-dimensional manifolds with a uniform lower bound on Ricci curvature and non-collapsing volume are uniformly locally orientable. More precisely, we establish the following statements.

\begin{thm}[Uniform Local Orientability]\label{thm:UniformLocalOrientability}
    Let $(M^4,g)$ satisfy ${\rm Ric}_g \ge -3$ and ${\rm Vol}_g(B_1(p)) \ge v>0$ for some $p\in M^4$. There exists $r(v)>0$ such that $B_{r(v)}(x)$ is orientable for every $x\in B_1(p)$.
\end{thm}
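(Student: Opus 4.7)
The plan is to argue by contradiction and compactness. Suppose the claim fails: then for each $k \in \NN$ one can find a pointed manifold $(M_k^4, g_k, p_k)$ satisfying the hypotheses, a point $x_k \in B_1(p_k)$, and a radius $r_k \downarrow 0$ such that $B_{r_k}(x_k)$ is non-orientable. I would then rescale by $\tilde g_k \defeq r_k^{-2} g_k$: the (topological) unit ball $B_1^{\tilde g_k}(x_k) = B_{r_k}^{g_k}(x_k)$ is non-orientable, the Ricci lower bound becomes $\mathrm{Ric}_{\tilde g_k} \geq -3 r_k^2 \to 0$, and using $B_1(p_k) \subset B_2^{g_k}(x_k)$ together with Bishop--Gromov at the original scale yields a uniform non-collapsing bound $\mathrm{Vol}_{\tilde g_k}(B_1^{\tilde g_k}(x_k)) \geq c(v) > 0$. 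Gromov precompactness then delivers, along a subsequence, a pointed Gromov--Hausdorff limit $(Y, \mathsf{d}_Y, y_\infty)$, a non-collapsed four-dimensional Ricci limit space with $\mathrm{Ric}_Y \geq 0$.

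By the stability of non-orientability (item (ii) of the main results), the limit ball $B_1^Y(y_\infty)$ is itself non-orientable. To derive a contradiction, I would appeal to the structural theory for non-collapsed four-dimensional Ricci limits with Ricci bounded below developed in \cite{brupisem}: the $\epsilon$-regularity theorem rules out $y_\infty$ being a regular point of $Y$ (otherwise a small neighborhood would be diffeomorphic to a ball in $\RR^4$ and hence orientable), and constrains the tangent cones at singular points. Combining this with the characterization of orientability via the effective manifold part (item (i) of the main results), orientability of $B_1^Y(y_\infty)$ reduces to that of its regular stratum, which the classification in \cite{brupisem} forces to be orientable, contradicting the stability conclusion.

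The hardest step will be the last one: extracting from \cite{brupisem} enough control on the four-dimensional limit $Y$ to ensure orientability of its effective manifold part. The rescaling, compactness, and stability steps are essentially standard once items (i) and (ii) of the main results are available; the real content of the theorem lies in the interplay between the orientability theory developed here and the fine structural rigidity of non-collapsed four-dimensional Ricci limits.
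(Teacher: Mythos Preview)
Your contradiction/rescaling/compactness setup is correct and matches the paper. The gap is in the final step. After rescaling by $r_k$ and passing to a limit $(Y,y_\infty)$, stability of non-orientability (Theorem~\ref{thm:stabnonorRicciLimit}) does give that $B_1^Y(y_\infty)$ is non-orientable. But your argument that this is impossible does not go through: what \cite{brupisem} gives (via Example~\ref{ex:RCD(2,3)or}) is that every \emph{tangent cone} of $Y$ has orientable cross-section, i.e.\ every point of $Y$ is \emph{locally} orientable (Theorem~\ref{thm:localnonorientable}). Local orientability everywhere does not imply orientability of a fixed ball---think of a M\"obius-type example. Your remark that $\epsilon$-regularity rules out $y_\infty$ being regular is also off: regularity of $y_\infty$ would only force some \emph{small} ball to be orientable, not $B_1^Y(y_\infty)$; there is no contradiction there.

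The paper closes this gap by arranging that the limit is itself a cone $C(Z^3)$, not just some Ricci limit $Y$. The key observation is that if $B_{r_k}(x_k)$ is non-orientable then so is $B_s(x_k)$ for every $s\ge r_k$ (the orientation-reversing loop persists), so non-orientability holds across the whole range of scales $[r_k,1]$. Combining this with Cheeger--Colding volume-cone rigidity---the volume ratio is monotone and pinched between $c(v)$ and $1$, hence nearly constant at most dyadic scales in $[r_k,1]$---one can choose $s_k\in[r_k,\sqrt{r_k}]$ with $s_k\to0$ at which the rescaled ball is simultaneously non-orientable and $o(1)$-close to a cone. The limit along this sequence is then $C(Z^3)$ with a non-orientable unit ball; but \cite{brupisem} forces $Z^3$, and hence $C(Z^3)$ (Example~\ref{ex:cones}), to be orientable, which is the contradiction.
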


\begin{thm}[Maximal Volume Growth and Orientability]\label{thm:MaximalVolumeGrowth}
    Let $(M^4,g)$ be an open manifold with ${\rm Ric}_g \ge 0$ and Euclidean volume growth. Then $M^4$ is orientable.
\end{thm}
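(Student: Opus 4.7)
The plan is to deduce the global statement from Theorem \ref{thm:UniformLocalOrientability} by a rescaling argument, exploiting the scale-invariance of both the Ricci lower bound $\mathrm{Ric}\ge 0$ and the non-collapsing condition provided by Euclidean volume growth. Fix a basepoint $p\in M^4$ and let $V>0$ be such that $\mathrm{Vol}_g(B_R(p))\ge VR^4$ for every $R>0$.

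For each $R>0$, I would consider the rescaled pointed Riemannian manifold $(M^4,R^{-2}g,p)$. Since Ricci curvature is invariant as a $(0,2)$-tensor under constant rescalings, one has $\mathrm{Ric}_{R^{-2}g}\ge 0\ge -3(R^{-2}g)$. Moreover $B_1^{R^{-2}g}(p)$ coincides as a set with $B_R^g(p)$, and $\mathrm{Vol}_{R^{-2}g}(B_1^{R^{-2}g}(p))=R^{-4}\mathrm{Vol}_g(B_R(p))\ge V$. Hence Theorem \ref{thm:UniformLocalOrientability} applies uniformly in $R$, with the same constant $v=V$, yielding a radius $r_0=r(V)>0$ (independent of $R$) such that $B_{r_0}^{R^{-2}g}(x)$ is orientable for every $x\in B_1^{R^{-2}g}(p)$. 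Taking $x=p$ and undoing the rescaling, $B_{r_0R}^g(p)$ is an orientable open subset of $M^4$ for every $R>0$.

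Now any loop $\gamma\subset M^4$ is compact and therefore contained in $B_{r_0R}^g(p)$ for $R$ large enough. Since this ball is orientable, $\gamma$ preserves orientation, or equivalently its pairing with $w_1(M^4)\in H^1(M^4;\mathbb{Z}/2)$ vanishes. As this holds for every loop, $w_1(M^4)=0$ and $M^4$ is orientable. Equivalently, one can choose orientations on the nested exhausting family $\{B_{r_0R}^g(p)\}_{R>0}$ inductively in $R$, using connectedness of each ball to make the choices compatible, and then take the union to orient $M^4$.

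The only substantial obstacle is Theorem \ref{thm:UniformLocalOrientability} itself, which presumably rests on the full orientability theory developed in the paper together with the stability statement in item (ii) of the introduction and the results of \cite{brupisem}. Once it is in hand, the passage to the global statement is essentially formal: Euclidean volume growth is exactly the assumption that turns uniform local orientability at scale $1$, after rescaling, into orientability at every scale.
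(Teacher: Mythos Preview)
Your argument is correct. It differs from the paper's proof in organization: the paper argues by contradiction, assuming $M^4$ is non-orientable, blowing down $(M^4,g)$ to a tangent cone at infinity $C(Z^3)$, and then invoking Theorem \ref{thm:stabnonorRicciLimit} together with \cite{brupisem} (which forces $Z^3$ to be orientable) to reach a contradiction. You instead take Theorem \ref{thm:UniformLocalOrientability} as a black box and observe that, since the hypotheses ${\rm Ric}\ge 0$ and $\mathrm{Vol}(B_R(p))\ge VR^4$ are scale-invariant, the uniform radius $r(V)$ applies to $(M^4,R^{-2}g)$ for every $R$, producing an exhaustion of $M^4$ by orientable balls. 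Both routes rest on exactly the same ingredients (the stability of non-orientability and the orientability of cross-sections from \cite{brupisem}), so your reduction is a clean repackaging rather than an independent proof; what it buys is that one does not have to rerun the blow-down contradiction, at the cost of invoking Theorem \ref{thm:UniformLocalOrientability} as an intermediate step.
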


In dimensions two and three, a stronger version of the previous statement holds: uniform lower bound on the Ricci curvature and volume non-collapsing imply {\it uniform local contractibility} of the space \cite{Zhu93}. Moreover, in these dimensions manifolds with nonnegative Ricci curvature and Euclidean volume growth are known to be diffeomorphic to $\RR^3$ \cite{Liu13, SchoenYau82}.
However, these facts do not hold in dimension at least four, as it is readily seen by looking at the Eguchi-Hanson space and its rescalings.

\begin{rem}[Orientability in Higher Dimension]
Theorems \ref{thm:UniformLocalOrientability} and \ref{thm:MaximalVolumeGrowth} admit higher-dimensional extensions under the assumption of an (almost) split structure of the space. By applying the same argument, it follows that a manifold $(M^n, g)$ with ${\rm Ric}_g \geq 0$ and Euclidean volume growth is orientable, provided there exists a tangent cone at infinity that splits isometrically an $\mathbb{R}^{n-4}$ factor.
\end{rem}

Without additional splitting or symmetry assumptions, our results are sharp, as demonstrated by the following example of Otsu \cite{Otsu91}.

\begin{ex}[Sharpness of Theorem \ref{thm:UniformLocalOrientability}]\label{ex:Otsu}
There exists a sequence of smooth metrics $g_k$ on $S^3 \times \mathbb{RP}^2$ with  ${\rm Ric}_{g_k} \ge 1$ and ${\rm Vol}_{g_k}(S^3 \times \mathbb{RP}^2)\ge v>0$  such that 
\begin{equation}
    (S^3 \times \mathbb{RP}^2, g_k) \to (\Sigma(S^2 \times \mathbb{RP}^2), d) ,
    \quad 
    \text{as $k\to \infty$},
\end{equation}
in the Gromov--Hausdorff topology. Here, $\Sigma(S^2 \times \mathbb{RP}^2)$ denotes the spherical suspension over $S^2 \times \mathbb{RP}^2$, and $d$ is the limit distance. The limit space has two singular points $x, y$, where the tangent cone is $C(S^2 \times \mathbb{RP}^2)$, which is non-orientable according to our notion of orientability (see Definition \ref{dfn:orientable}).
Moreover, it is not hard to check that arbitrarily small balls around the points $x_k, y_k \to x, y$ in $S^3 \times \mathbb{RP}^2$, approximating these singular points, are not orientable for sufficiently large $k$.
\end{ex}

\begin{ex}[Sharpness of Theorem \ref{thm:MaximalVolumeGrowth}]
A construction similar to the previous example allows us to build a metric with nonnegative Ricci curvature and Euclidean volume growth on \( \mathbb{R}^3 \times \mathbb{RP}^2 \), showing that Theorem \ref{thm:MaximalVolumeGrowth} fails in dimension five. More precisely,
using polar coordinates for the $\mathbb{R}^3$ factor,
one considers a doubly warped product metric 
\begin{equation}\label{ex:metric}
    dr^2 + f_1(r)^2 g_{S^2} + f_2(r)^2 g_{\mathbb{RP}^2} ,
\end{equation}
where $f_1(r)$ is concave and satisfies $f_1(r) = r$ for $0 \le r \le 10^{-1}$, and $f_1(r) = 10^{-1}(r + 100)$ for $r \ge 1000$, while $f_2(r)$ is convex and given by $f_2(r) = \delta$ for $0 \le r \le 10^{-1}$, and $f_2(r) = \delta'(r + 100)$ for $r \ge 1000$. By selecting $0 < \delta < 1$ sufficiently small and a suitable $0 < \delta' < \delta$ depending on $\delta$, one can smoothly define $f_1(r)$ and $f_2(r)$ in the transition region $10^{-1} \leq r \leq 1000$ to ensure that the Ricci curvature is nonnegative. For further details on the construction of $f_1(r)$ and $f_2(r)$, we refer the reader to \cite[Section 7]{BrueNaberSemola23Milnor}.
Note that the metric \eqref{ex:metric} is smooth and isometric to $C(S^2_{10^{-1}} \times \mathbb{RP}^2_{\delta'})$ for $r > 1000$. Consequently, it exhibits Euclidean volume growth.

In dimension \( n = 10 \), Dancer and Wang have constructed an Einstein metric with Euclidean volume growth on \( \mathbb{R}^4 \times \mathbb{RP}^6 \) \cite{DancerWang03}; see also the discussion in \cite{HondaOrient}.
\end{ex}

\subsection{Orientable RCD Spaces}

We propose a notion of orientability for non-collapsed $\RCD(-(n-1),n)$ spaces without boundary, based on the {\it orientability of their manifold part}. Our notion turns out to be equivalent to the one introduced by Honda \cite{HondaOrient} for Ricci limit spaces, which involves the existence of a (weak notion of) volume form. We refer to Section \ref{intro:VolumeForemCurrents} for the detailed statement.

\begin{defn}[Orientable $\RCD$ Spaces]\label{dfn:orientable}
	Let $(\XX, d, \mathcal{H}^n)$ be a non-collapsed $\RCD(-(n-1),n)$ space with no boundary. We say that $(\XX, d, \mathcal{H}^n)$ is \emph{orientable} if every open set $A \subseteq \XX$ which is a topological manifold is orientable.
\end{defn}

Definition \ref{dfn:orientable} is obviously consistent with the standard notion of orientability for smooth Riemannian manifolds. However, in practical applications, it is not necessary to verify the orientability of every subset $A \subseteq \XX$ as described. It suffices to identify one sufficiently large subset with the required properties.

\begin{prop}\label{thm:char1}  
	A non-collapsed $\RCD(-(n-1),n)$ space $(\XX, d, \mathcal{H}^n)$ with no boundary is orientable according to Definition \ref{dfn:orientable} if and only if there exists an open subset $A \subseteq \XX$ which is an orientable topological manifold and satisfies $\HH^{n-1}(\XX \setminus A) =0$.
\end{prop}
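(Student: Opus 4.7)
For the ``only if'' direction, let $A$ denote the union of all open subsets of $X$ which are topological manifolds. Since being a topological manifold is a local property, $A$ is itself an open topological manifold, and by Definition~\ref{dfn:orientable} it is orientable. That $\HH^{n-1}(X \setminus A)=0$ then follows from the regularity theory of non-collapsed $\RCD(-(n-1),n)$ spaces without boundary: the set of points admitting a neighborhood homeomorphic (indeed bi-Hölder) to a Euclidean ball is open, sits inside $A$, and has complement of Hausdorff codimension at least $2$ (combining the work of Kapovitch--Mondino with the recent \cite{brupisem}).

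For the converse, fix $A$ as in the statement and let $U \subseteq X$ be any open topological manifold; we show $U$ is orientable by exhibiting a continuous section of its orientation double cover $\pi : \tilde U \to U$. Note first that $A \cap U$ is an open subset of $A$, hence an orientable topological manifold, and the restriction of $\pi$ over $A \cap U$ coincides with the orientation double cover of $A \cap U$; in particular it admits a continuous section $\sigma : A \cap U \to \tilde U$. We then extend $\sigma$ to $U$ using local trivializations: for each $x \in U$ pick a connected open $V_x \subseteq U$ homeomorphic to a Euclidean ball, so $\pi^{-1}(V_x) = V_x^{+} \sqcup V_x^{-}$. Using the removability step below, $V_x \cap A$ is connected and nonempty, so $\sigma(V_x \cap A)$ lies in a single component, say $V_x^{+}$, and one defines $\sigma_x := (\pi|_{V_x^{+}})^{-1}$. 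For any $x, x' \in U$, each connected component of $V_x \cap V_{x'}$ meets $A$ (by density, again via removability), so $\sigma_x$ and $\sigma_{x'}$ agree with $\sigma$ there and therefore on the whole component by uniqueness of local trivializations. Gluing produces a global continuous section of $\pi$, so $U$ is orientable.

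The technical core — and the main obstacle — is the topological removability statement used above: if $V \subseteq X$ is a connected open set which is a topological $n$-manifold and $N \subseteq V$ is closed with $\HH^{n-1}(N)=0$, then $V \setminus N$ is connected. In Euclidean space this is the classical fact that closed $\HH^{n-1}$-null sets do not locally separate $\RR^n$. Transferring it to our chart $V_x$ requires reconciling the ambient Hausdorff measure on $X$ with the Euclidean structure pulled back through the chart; this can be done by invoking the bi-Hölder (or, on the tame part of the regular set, essentially bi-Lipschitz) structure supplied by the RCD regularity theory, or alternatively by appealing directly to the coarea/isoperimetric inequality in non-collapsed $\RCD(-(n-1),n)$, which implies that any set separating a ball in $X$ must carry positive $\HH^{n-1}$-measure. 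Once this removability is in hand, the double cover extension argument goes through and completes the proof.
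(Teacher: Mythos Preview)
Your proof takes a different route than the paper. The paper argues by contradiction using orientation-reversing loops: assuming $A'$ non-orientable, one takes an orientation-reversing loop $\gamma$ in $A'$, approximates it by a loop $\tilde\gamma$ in $A$ (using short geodesics in $A$ supplied by Lemma~\ref{moltoconnesso}, which shows that closed $\HH^{n-1}$-null sets are avoided by almost every geodesic from a given point), and shows $\tilde\gamma$ is homotopic to $\gamma$ inside $A'$---contradicting orientability of $A$. Your section-extension approach via the orientation double cover is equivalent in spirit but packaged differently.

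There is, however, a gap in your removability step. Your approach (a) via bi-H\"older charts does not suffice: the Reifenberg charts supplied by the $\RCD$ regularity theory are Lipschitz (so $\HH^{n-1}$-nullity transfers to $\RR^n$), but they exist only at points of $A_\epsilon(\XX)$. A general $x\in U$ need not lie in $A_\epsilon(\XX)$---think of the tip of a narrow cone $C(S^{n-1}_r)$, which is a topological manifold point but metrically singular---and there the only available chart is the topological one coming from $U$, which may distort Hausdorff dimension arbitrarily. So you cannot conclude $V_x\cap A$ is connected by pushing to $\RR^n$.

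Your approach (b) is the right one, and it is essentially the paper's Lemma~\ref{moltoconnesso} (proved by an optimal-transport argument rather than coarea). But it must be applied to \emph{metric} balls, and there is a further subtlety: even for a metric ball one does not directly get that $B_r(x)\cap A$ is connected---only that any two of its points are joined by a path in $A\cap B_{3r}(x)$, since the geodesics of Lemma~\ref{moltoconnesso} have controlled length but may exit $B_r(x)$. The fix is to take $V_x:=B_r(x)$ with $r$ small enough that $B_{3r}(x)$ sits inside a topological chart ball $W_x\subset U$; then the cover trivializes over $W_x$ (simply connected), and the path-connectedness of $V_x\cap A$ \emph{inside} $W_x\cap A$ forces $\sigma(V_x\cap A)$ into a single sheet over $W_x$, which is all you need to define $\sigma_x$. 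This is exactly the structure underlying the paper's proof (metric balls $B_\delta(x_i)\subset B_{10\delta}(x_i)$ inside topological balls $B_i\subset A'$), just phrased in covering-space language rather than loop language.
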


%This result is highly non-trivial within the $\RCD$ framework and requires a fine analysis involving a \emph{volume form}. However, in the context of smooth Riemannian manifolds, it becomes an elementary exercise. Specifically, if there exists a subset $A \subseteq M^n$ as described, but $M^n$ is not orientable, then the lift $\widehat{A} \defeq  \pi^{-1}(A)$ is disconnected, where $\pi : \widehat{M} \to M$ is the orientable double cover. Furthermore, $\dim_\mathcal{H}(\widehat{M} \setminus \widehat{A}) < n-1$. However, a subset with Hausdorff dimension smaller than $n-1$ cannot disconnect a smooth manifold.

\begin{rem}[Local Orientability] \label{rmk.local.ori}
	Both Definition \ref{dfn:orientable} and Proposition \ref{thm:char1} can be applied locally. Specifically, in a non-collapsed $\RCD$ space without boundary, we say that the ball $B_1(x)$ is {\it orientable} if $A \cap B_1(x)$ is orientable for any $A \subseteq \XX$ as in Definition \ref{dfn:orientable}. Furthermore, to establish orientability of $B_1(x)$, it suffices to find an open set $A$ which is an orientable manifold with $\mathcal{H}^{n-1}(B_1(x) \setminus A) =0$.
\end{rem}

The characterization of orientability in Proposition \ref{thm:char1} is particularly convenient for our setting of non-collapsed $\RCD$ spaces without boundary, where the $\epsilon$-regularity theorem \cite{Cheeger-Colding97I,DPG17} ensures that, if $0<\epsilon < \epsilon(n)$, the open subset
\begin{equation}\label{eq:Aeps}
	A_{\epsilon}(\XX)\defeq \lft\lbrace x\in \XX \, : \, d_{GH}(B_r(x), B_r(0^n)) < \epsilon r \  \text{for some $r\in (0,\epsilon)$}\rgt\rbrace 
\end{equation}
is a connected topological manifold without boundary  whose complement has Hausdorff dimension  smaller than or equal to $n-2$, making it a suitable candidate to test orientability.

\begin{ex}[Cones and Spherical Suspensions]\label{ex:cones}
	Let $(\XX, d, \mathcal{H}^n)$ be a non-collapsed $\RCD(n-1,n)$ space with no boundary. It is well-known that the cone $C(\XX)$ and the spherical suspension $\Sigma(\XX)$ are $\RCD(0,n+1)$ spaces. By Theorem \ref{thm:char1}, it is easy to see that both $C(\XX)$ and $\Sigma(\XX)$ are orientable if and only if $\XX$ is orientable. 
	Indeed, consider the subset $A_{\epsilon}(\XX) \subseteq \XX$ for $\epsilon<\epsilon(n)$ small enough. For the cone, the set $(0, \infty) \times A_\epsilon(\XX)$ serves as a suitable open subset for testing orientability, and it is orientable if and only if $\XX$ is orientable. A similar argument holds for spherical suspensions.
\end{ex}

\begin{ex}[Three-Dimensional Cones]
Non-collapsed $\RCD$ spaces of dimension two  are topological manifolds. In particular, in the case of empty boundary, they are orientable according to Definition \ref{dfn:orientable} if and only if they are orientable manifolds. A three-dimensional cone $C(\XX^2)$ where $\XX^2$ has no boundary is orientable if and only if $\XX^2$ is homeomorphic to $S^2$. Indeed, $\XX^2$ is homeomorphic either to $S^2$ or to $\mathbb{RP}^2$, as a consequence of \cite{LytSta18} and standard topological arguments: see e.g.\ \cite[Corollary 3.3]{brupisem}. In particular, $C(\mathbb{RP}^2)$ is an example of non-orientable (but simply connected) $\RCD$ space with no boundary.
\end{ex}

\begin{ex}[$\RCD(2,3)$ Spaces]\label{ex:RCD(2,3)or}
Let $(Z^3,d, \HH^3)$ be a non-collapsed $\RCD(2,3)$ space without boundary whose tangent cones are all homeomorphic to $C(S^2)$. As a consequence of \cite{brupisem}, $Z^3$ is a topological manifold covered by $S^3$, and hence an orientable manifold. Indeed, \cite[Theorem 1.8]{brupisem} shows that $Z^3$ is a topological $3$-manifold without boundary and, by the solution to the Poincaré conjecture, its universal cover (which is compact as it is $\RCD(2,3)$) is homeomorphic to $S^3$.
Moreover, a homeomorphism $h:S^3\to S^3$ without fixed points cannot reverse the orientation: if this happened, the induced map
$h_*:H_3(S^3)\to H_3(S^3)$ would be given by $h_*([\sigma])=-[\sigma]$, and hence $h$ would have Lefschetz number $2\neq0$,
contradicting the fixed-point theorem of Lefschetz.
%In particular, $(Z^3, d, \HH^3)$ is orientable according to Definition \ref{dfn:orientable}. 
This class of spaces is significant, as they arise as cross-sections of tangent cones (also at infinity) to non-collapsed four-dimensional Ricci limit spaces \cite{brupisem}.

\end{ex}

\begin{ex}[Four-Dimensional Cones]\label{ex:4cones}
In light of Examples \ref{ex:cones} and \ref{ex:RCD(2,3)or}, any tangent cone to a non-collapsed four-dimensional Ricci limit space $C(\XX^3)$ is orientable. This property is a key ingredient in the proof of Theorems \ref{thm:UniformLocalOrientability} and \ref{thm:MaximalVolumeGrowth}, along with the stability result presented in the following section.
\end{ex}

\subsection{Volume Form and Currents}
\label{intro:VolumeForemCurrents}
Our notion of orientability is equivalent to the existence of a volume form $\omega \in L^\infty(\Lambda^n T^*X)$, and agrees with the one proposed by Honda in \cite{HondaOrient}.

\begin{thm}[Volume Form vs Orientability]
	\label{thm:OrientabilityVolumeForm}
	A non-collapsed $\RCD(-(n-1),n)$ space $(\XX, d, \mathcal{H}^n)$ with no boundary is orientable according to Definition \ref{dfn:orientable} if and only if there exists $\omega \in L^\infty(\Lambda^n T^*\XX)$ such that $|\omega| =1$ a.e.\ and $\omega \,\cdot\, \eta \in W^{1,2}(\XX)$ for every $\eta = f_0\, df_1 \wedge \ldots \wedge df_n$ with $f_i \in \mathrm{Test}(\XX)$ with compact support. Moreover, the volume form $\omega$ is unique up to scalar multiplication by $-1$.
\end{thm}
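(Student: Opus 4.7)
The plan is to use the characterization of orientability from Proposition \ref{thm:char1} together with the effective manifold part $A_\epsilon(X)$ defined in \eqref{eq:Aeps}, which for $\epsilon < \epsilon(n)$ is a connected open topological manifold with $\mathcal{H}^{n-1}(X \setminus A_\epsilon) = 0$. Both directions of the equivalence, and uniqueness, proceed by comparing $\omega$ with a local Riemannian volume form on $A_\epsilon$.

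Assume first that $X$ is orientable. Then $A_\epsilon$ carries a topological orientation, which combined with its Riemannian tangent-module structure determines a canonical unit measurable $n$-form $\omega_0$ on $A_\epsilon$; I extend $\omega_0$ by zero to the $\mathcal{H}^n$-null set $X\setminus A_\epsilon$ and call the extension $\omega$. To verify the Sobolev condition, fix $\eta = f_0\, df_1 \wedge \cdots \wedge df_n$ with $f_i \in \mathrm{Test}(X)$ compactly supported. Working on small open subsets of $A_\epsilon$ in an orthonormal Lipschitz frame, I compute $\omega \cdot \eta$ explicitly as a scalar determinant in the first derivatives of the $f_i$ (times $f_0$), and this expression lies in $W^{1,2}_{\mathrm{loc}}(A_\epsilon)$ thanks to the second-order calculus available for test functions. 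Since $X\setminus A_\epsilon$ has $\mathcal{H}^{n-1}$-measure zero, hence zero $2$-capacity, a removable-singularity argument promotes the local regularity to $\omega \cdot \eta \in W^{1,2}(X)$.

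Conversely, suppose such an $\omega$ exists. By Proposition \ref{thm:char1} it suffices to show that $A_\epsilon$ is orientable as a topological manifold. Around any point of $A_\epsilon$ I pick test functions $f_1, \ldots, f_n$ providing a chart on a small open neighborhood $U$ (available via the $\epsilon$-regularity theorem \cite{Cheeger-Colding97I, DPG17} and the associated splitting-map machinery), and let $\omega_U^{\mathrm{std}}$ be the corresponding local Riemannian volume form. Since $\Lambda^n T^*X$ has rank one on $A_\epsilon$ and both forms have unit length, one may write $\omega = \sigma_U \omega_U^{\mathrm{std}}$ almost everywhere on $U$ for some $\sigma_U : U \to \{-1, +1\}$. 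Applying the Sobolev hypothesis to $\eta = f_0\, df_1 \wedge \cdots \wedge df_n$ for varying compactly supported test $f_0$ forces $\sigma_U$ into $W^{1,2}_{\mathrm{loc}}(U)$, and since it takes only the values $\pm 1$ it must be locally constant. The local signs then patch to a consistent orientation of $A_\epsilon$. Uniqueness follows by the same chart argument: if $\omega_1, \omega_2$ satisfy the hypothesis, one gets $\omega_2 = \sigma \omega_1$ almost everywhere with $\sigma$ locally constant on the connected set $A_\epsilon$, hence globally equal to $\pm 1$.

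The main obstacle is the removable-singularity step: transferring local $W^{1,2}$ regularity on $A_\epsilon$ across the $\mathcal{H}^{n-1}$-negligible singular set to obtain global $W^{1,2}$ regularity on $X$, together with the frame-by-frame identification of $\omega \cdot \eta$ with the desired determinantal expression in the RCD setting without smooth charts. Once these are in place, both implications and the uniqueness statement reduce to clean local computations on the regular part.
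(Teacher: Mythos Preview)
Your overall strategy matches the paper's: build the volume form locally on $A_\epsilon(\XX)$ from $\delta$-splitting maps, glue via the topological orientation, and extend across $\XX\setminus A_\epsilon$ by a removable-singularity argument; for the converse, compare $\omega$ with a local reference form and show the sign is locally constant.

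However, there is a genuine technical gap, and it is not where you locate it. You invoke an ``orthonormal Lipschitz frame'' on pieces of $A_\epsilon$ and treat the local reference form $\omega_U^{\mathrm{std}}$ as if it were a smooth Riemannian volume element. In the $\RCD$ setting this is not available: the charts given by $\delta$-splitting maps $u=(u_1,\dots,u_n)$ are only bi-H\"older (Theorem~\ref{prel2}), and the natural local form is $\nu/|\nu|$ with $\nu=du_1\wedge\cdots\wedge du_n$. Crucially, $|\nu|$ is \emph{not} bounded below by a positive constant; on a two-dimensional cone with small angle defect it behaves like $r^{2\epsilon}$. The paper's central technical input (Proposition~\ref{integrability}) is that nonetheless $|\nu|^{-1}\in L^r\cap W^{1,q}$ for every $r<\infty$ and $q<2$, proved by an iterated covering argument exploiting the transformation estimates \eqref{T.distortion}. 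Without this, neither direction works. In the forward direction one cannot show $\omega\cdot\eta\in W^{1,2}$: the computation in Proposition~\ref{sdfdascasdca} needs $|\nu|^{-1}\in W^{1,4/3}$ to see that $\nabla(\nu/|\nu|)$ vanishes against top forms. In the reverse direction your claim that the Sobolev hypothesis ``forces $\sigma_U$ into $W^{1,2}_{\mathrm{loc}}$'' is unjustified as stated: what the hypothesis gives is $\omega\cdot\nu\in W^{1,2}\cap L^\infty$, and to reach $\sigma_U=(\omega\cdot\nu)\,|\nu|^{-1}$ in some $W^{1,q}$ with $q>1$ you must multiply by $|\nu|^{-1}$, which is exactly where Proposition~\ref{integrability} enters (this is Step~1 of Proposition~\ref{bgfvsdca}). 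The Gram--Schmidt frame you want does exist, but only with $L^r\cap W^{1,q}$ coefficients, not Lipschitz ones.

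By contrast, the removable-singularity step you flag as the main obstacle is comparatively soft: the paper handles it in Lemma~\ref{sobolevext} by a cutoff argument together with the BV/coarea fact that $\HH^{n-1}(C)=0$ forces $|Df|(C)=0$.
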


Actually, to ensure that $\XX$ is orientable it is enough to build a less rigid form: see Theorem \ref{vefdacscdcsd}, where all the detailed equivalences are collected.

% {\color{red}

% \begin{rem}[Regularity of the Volume Form]
%     The condition $\omega \cdot \eta \in W^{1,2}(\XX)$ should be interpreted as a Sobolev-like regularity of the volume form and coincides with the one used in \cite{HondaOrient}. Actually, as a consequence of our construction, we can show that for every $q<2$, the local representation holds\footnote{EB: vi torna?}
%     \begin{equation}
%         \omega = f_0 d f_1 \wedge \ldots \wedge d f_n \, ,
%         \quad f_0 \in W^{1,q}(\XX)\, , \quad f_1, \ldots , f_n \in W^{2,q}(\XX) 
%     \end{equation}
%     Moreover, we can prove that $\omega$ is parallel in a week sense, i.e., for every $\eta\in \mathrm{TestForms}_{n}(\XX)$, $\nabla (\omega\,\cdot\,\eta)=\omega\,\cdot\,\nabla \eta\ \HH^n$-a.e.
% \end{rem}
% \footnote{CB: forse è vero che nelle nozioni equivalenti di orientabilità possiamo aggiungere $(7)$: ho una forma $\omega$ con $|\omega|=1$ e una successione $\omega_k$ di forme regolari con $\omega_k$ limitata in $L^\infty$ e $\delta\omega_k$ limitata in $L^q$ per $q>1$. Non so se valga la pena però pensarci bene e controllare davvero
% \\
% EB: Si, non so se ho voglia di sbattermi per questo fattoCB: propongo di saltare
% }
% }

\begin{rem}[Harmonic Volume Form]
	 It is possible to show that the volume form $\omega \in L^\infty(\Lambda^n T^*\XX)$ in Theorem \ref{thm:OrientabilityVolumeForm} is harmonic. Specifically, $\delta \omega = 0$ in the sense of distributions, i.e.\ $\int_\XX \omega\,\cdot\,d\eta\,\dd\HH^n=0$ for every  $\eta\in \mathrm{TestForms}_{n-1}(\XX)$ with compact support (see Theorem \ref{vefdacscdcsd}). 
\end{rem}

For the sake of completeness, we  investigate also the link between  the existence of a non-vanishing harmonic top form, and the existence of a non-vanishing top dimensional metric current with no boundary \cite{AmbrosioKirchheim00,lan11}. This provides us with another equivalent notion of orientability.
As in the rest of the paper, the investigation is limited to non-collapsed $\RCD$ spaces with \emph{no boundary}. For Ricci limit spaces, an analogue investigation has been conducted in \cite{HondaOrient}. Our main result in this direction is basically a corollary of Proposition \ref{formsandcurrents}, in which the link between forms and currents is established, and is as follows
(see also Theorems \ref{brfvaed} and \ref{vfedacadc}).

\begin{thm}[Metric Currents vs Orientability]
		A non-collapsed $\RCD(-(n-1),n)$ space $(\XX, d, \mathcal{H}^n)$ with no boundary is orientable according to Definition \ref{dfn:orientable} if and only if there exists a nonzero metric $n$-current $T$, with $|T|\in L^\infty$, with no boundary. Moreover, the current $T$ is unique up to scalar multiplication by $c\in \RR\setminus \{0\}$.
\end{thm}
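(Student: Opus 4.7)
The plan is to reduce the theorem to Theorem \ref{thm:OrientabilityVolumeForm} via the bijective correspondence, supplied by Proposition \ref{formsandcurrents}, between bounded top-degree forms and bounded metric $n$-currents. Under this correspondence, a form $\omega\in L^\infty(\Lambda^n T^*\XX)$ is associated with the current $T_\omega(\eta)\defeq \int_\XX\omega\,\cdot\,\eta\,\diff\HH^n$ acting on test $n$-forms $\eta$; boundedness of $T$ corresponds to $\omega\in L^\infty$, and the condition $\partial T_\omega=0$ (i.e.\ $T_\omega(d\eta)=0$ for all test $(n-1)$-forms $\eta$) translates precisely into the distributional codifferential equation $\delta\omega=0$ appearing in the harmonic volume form remark.

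Granting this correspondence, the forward implication is immediate: if $\XX$ is orientable, Theorem \ref{thm:OrientabilityVolumeForm} together with the harmonic volume form remark furnishes $\omega\in L^\infty(\Lambda^n T^*\XX)$ with $|\omega|=1$ a.e.\ and $\delta\omega=0$, so that $T\defeq T_\omega$ is nonzero, satisfies $|T|\in L^\infty$, and has $\partial T=0$. Conversely, from a current $T$ as in the statement, the correspondence yields a nonzero form $\omega\in L^\infty(\Lambda^n T^*\XX)$ with $\delta\omega=0$ distributionally; one then invokes Theorem \ref{vefdacscdcsd}, which collects several equivalent (and weaker than $|\omega|=1$) criteria for orientability—among them the existence of a nontrivial bounded harmonic top form—to conclude that $\XX$ is orientable.

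For uniqueness, fix the unit volume form $\omega_0$ produced by Theorem \ref{thm:OrientabilityVolumeForm} (unique up to sign) and let $T_1,T_2$ be two nonzero currents as in the statement, corresponding via the previous step to forms $\omega_1,\omega_2\in L^\infty(\Lambda^n T^*\XX)$ with $\delta\omega_i=0$. Since $|\omega_0|=1$ a.e., pointwise division gives $\omega_i=f_i\omega_0$ for some $f_i\in L^\infty(\XX)$. Via the Hodge star identifying top forms with functions on the effective manifold part, the equations $\delta\omega_0=0$ and $\delta(f_i\omega_0)=0$ reduce, by the Leibniz rule, to $df_i=0$ in the distributional sense; connectedness of $\XX$ (which holds here, as $A_\epsilon(\XX)$ is a dense connected subset) then forces each $f_i$ to be a constant $c_i\neq 0$, and therefore $T_2=(c_2/c_1)T_1$. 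The main technical obstacle, which I expect to be absorbed into Proposition \ref{formsandcurrents}, is the rigorous derivation of the equivalence $\partial T=0\Leftrightarrow\delta\omega=0$ for merely $L^\infty$ objects on a singular space; a secondary delicate point, appearing in uniqueness, is executing the Hodge-star/Leibniz computation on the regular part in a way that is compatible with the weak formulations of $\delta$ employed throughout.
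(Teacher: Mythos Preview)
Your reduction of the equivalence to the form--current correspondence is exactly the paper's route: Proposition \ref{formsandcurrents} gives the bijection $\omega\leftrightarrow T_\omega$, and the equivalence $\partial T_\omega=0\Leftrightarrow \int\omega\cdot d\eta=0$ is then the content of Theorems \ref{maincurr} and \ref{bgrvfdcassc} (so this step is not absorbed into Proposition \ref{formsandcurrents} itself, but is proved separately and rather easily). With this in hand, orientability $\Leftrightarrow$ existence of such a current is just $(5)\Leftrightarrow(6)$ in Theorem \ref{vefdacscdcsd}, as you say.

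Where you diverge from the paper is in the uniqueness argument. The paper does not redo a Hodge-star/Leibniz computation; it simply notes that the map $T\mapsto\omega_T$ of Proposition \ref{formsandcurrents} is injective, and then invokes the already-established uniqueness of the volume form (Theorem \ref{uniqueness}). This is cleaner and avoids the very difficulty you flag: justifying a Leibniz-type identity $\delta(f\omega_0)=f\,\delta\omega_0-\iota_{\nabla f}\omega_0$ for an a priori merely bounded $f$ on a singular space. In the paper that identity is never needed at the level of currents; the analogous step (showing the scalar $f$ is constant) is carried out once, inside the proof of Proposition \ref{bgfvsdca}, via the local orthonormal frame of Proposition \ref{integrability} and Lemma \ref{divconst}, and then Theorem \ref{uniqueness} packages it. So your uniqueness sketch is not wrong in spirit, but it reproves something already available and at the cost of the delicate point you yourself identify; citing injectivity plus Theorem \ref{uniqueness} closes the argument in one line.
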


\subsection{Stability of (Non-)Orientability}\label{intro:stability}
In this section, we introduce two stability results:
\begin{itemize}
	\item[(i)] GH-limits of non-collapsed, orientable $\RCD$ spaces are still orientable;
	
	\item[(ii)] GH-limits of non-collapsed, uniformly bounded, non-orientable Ricci limit spaces are still non-orientable.
\end{itemize}

The first stability result, (i), in the context of non-collapsed Ricci limits, was originally proved in \cite{HondaOrient}. With our topological definition of orientability (Definition \ref{dfn:orientable}), it follows almost immediately.

\begin{thm}[Stability of Orientability]\label{vfeadadcsd}
	Let $(\XX_k,d_k,\HH^n,p_k)\xrightarrow{GH}(\XX,d,\HH^n,p)$ be a sequence of non-collapsed $\RCD(-(n-1),n)$ spaces with no boundary.
	If $(\XX_k,d_k,\HH^n)$ is orientable for every $k$, then  $(\XX,d,\HH^n)$ is orientable. 
\end{thm}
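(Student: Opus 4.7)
The plan is to verify the criterion of Proposition \ref{thm:char1} for the limit $(\XX,d,\HH^n)$. The natural candidate for the witnessing open set is $A\defeq A_{\epsilon}(\XX)$, as in \eqref{eq:Aeps}, with $\epsilon<\epsilon(n)/2$ small enough. By the $\epsilon$-regularity theorem, $A_{\epsilon}(\XX)$ is a connected open topological $n$-manifold and $\XX\setminus A_\epsilon(\XX)$ has Hausdorff dimension at most $n-2$, hence is $\HH^{n-1}$-negligible. The task thus reduces to showing that $A_{\epsilon}(\XX)$ is an orientable topological manifold.

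I will argue by contradiction. Suppose $A_\epsilon(\XX)$ is non-orientable: then there exists a loop $\gamma\colon S^1\to A_\epsilon(\XX)$ along which the orientation is reversed. By compactness I cover $\gamma(S^1)$ with a finite cyclic chain of $\epsilon$-regular balls $B_{r_i}(y_i)\subseteq A_\epsilon(\XX)$, $i=1,\dots,N$, with $B_{r_i}(y_i)\cap B_{r_{i+1}}(y_{i+1})\neq \emptyset$ for every $i$ (indices mod $N$). Each ball carries the biHölder chart $\varphi_i\colon B_{r_i}(y_i)\to V_i\subseteq\RR^n$ supplied by the $\epsilon$-regularity theorem, and the orientation-reversing property of $\gamma$ translates into
\begin{equation*}
    \prod_{i=1}^{N} \mathrm{sgn}\,\deg(\varphi_{i+1}\circ\varphi_i^{-1})=-1,
\end{equation*}
where each factor is the constant sign of the local degree of the transition homeomorphism on the connected overlap of consecutive charts.

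I would then lift this cyclic chain to $\XX_k$ for $k$ large. Choose $y_i^k\in \XX_k$ with $y_i^k\to y_i$ along the GH approximations. By the openness of the $\epsilon$-regularity condition under GH convergence, for $k$ large the balls $B_{r_i}(y_i^k)$ are $2\epsilon$-regular and hence lie in $A_{2\epsilon}(\XX_k)$, where they admit biHölder charts $\varphi_i^k$ from the same $\epsilon$-regularity theorem. The key point is that, identifying nearby points via the GH approximations, the $\varphi_i^k$ can be arranged to converge to $\varphi_i$ uniformly on compact subsets of the relevant overlaps; the same holds for their transition homeomorphisms. Since the sign of the local degree of a homeomorphism between open subsets of $\RR^n$ is a locally constant integer-valued topological invariant, stable under $C^0$-perturbations, for $k$ large the signs of the lifted transitions coincide with those of the original chain. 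Hence the lifted cyclic chain in $A_{2\epsilon}(\XX_k)$ has orientation-reversing monodromy, so $A_{2\epsilon}(\XX_k)$ is a non-orientable open topological submanifold of $\XX_k$. By Definition \ref{dfn:orientable} this contradicts the orientability of $\XX_k$.

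The hard part will be the compatibility between the biHölder charts and the GH convergence, namely arranging for $\varphi_i^k$ on $B_{r_i}(y_i^k)$ to converge to $\varphi_i$ along the GH approximations. I would address this by invoking the explicit construction of the Cheeger--Colding (or De Philippis--Gigli) charts via $\delta$-splitting harmonic maps (or Reifenberg-type arguments), together with the known stability of such constructions under GH convergence of balls. Once this is in place, the conclusion follows purely topologically from the stability of the local degree.
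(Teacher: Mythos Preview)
Your proposal is correct and follows essentially the same route as the paper's argument: argue by contradiction from an orientation-reversing loop in $A_\epsilon(\XX)$, cover it by finitely many Reifenberg-regular balls with bi-H\"older charts, lift the chain to $\XX_k$ for $k$ large, and use $C^0$-stability of the charts (hence of the transition degrees) to produce an orientation-reversing loop in $A_{2\epsilon}(\XX_k)$, contradicting orientability of $\XX_k$. The paper carries this out in its ``Proof II'' by tracking the homology generators $\mu_t$ directly rather than the product of transition-degree signs, but this is merely a different bookkeeping for the same idea; the paper also records a shorter ``Proof I'' that replaces the degree-tracking by a single appeal to the global Reifenberg gluing theorem (\`a la Cheeger--Colding/Kapovitch) to produce an honest homeomorphism between neighborhoods of the loop in $\XX$ and in $\XX_k$. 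The ``hard part'' you flag---arranging $\varphi_i^k\to\varphi_i$ uniformly---is exactly what the paper handles by noting that the Reifenberg chart on $B_{10\rho}(\gamma_k(\ell/N))$ can be built by perturbing any prescribed GH-approximation, so one may impose $\sup_x\tilde d(h_\ell(x),h_{k,\ell}(x))\le\delta_k\to0$.
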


\begin{rem}
Although we will provide a topological proof, this fact could also be proved by passing to the limit the metric $n$-currents $T_k$ associated with each space $\XX_k$. The integral current spaces $(\XX_k,d_k,T_k)$ can be shown to converge to the limit $(\XX,d,T)$
in the intrinsic flat metric. For the terminology, we refer the reader to \cite{MatPor}, where intrinsic flat convergence is shown to be essentially equivalent to Gromov--Hausdorff convergence.
\end{rem}

The second statement, (ii), is significantly more challenging and requires an analysis of the \textit{ramified double cover} introduced in the next section. Currently, we can establish this result for $\RCD$ spaces only under additional assumptions (see Theorem \ref{nonstability} below). The clearest statement of this result is available within the class of Ricci limit spaces.

\begin{thm}[Stability of Non-Orientable Ricci Limits]\label{thm:stabnonorRicciLimit}
	Let $(M^n_k, g_k, p_k) \xrightarrow{GH} (\XX, d, p)$ be a sequence satisfying the uniform bounds $\mathrm{Vol}(B_1(p_k)) \ge v > 0$ and $\mathrm{Ric}_{g_k} \ge -(n-1)$. Assume that, for some $R>0$, $B_R(p_k)$ is non-orientable for every $k$.  Then $(\XX,d,p)$ is a non-orientable Ricci limit space. 
\end{thm}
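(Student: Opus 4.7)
The strategy is to lift the sequence to orientation double covers, take a non-collapsed Ricci-limit cover, and argue that the resulting deck involution on the limit is free and orientation-reversing on the manifold part, which forces non-orientability of the base.

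\emph{Setup of the lifted sequence.} Since $B_R(p_k)$ is non-orientable, so is $M_k^n$; let $\pi_k\colon (\tilde M_k^n,\tilde g_k)\to (M_k^n,g_k)$ be its connected orientation double cover with pulled-back metric, and $\sigma_k$ the free orientation-reversing deck involution, so that $\mathrm{Ric}_{\tilde g_k}\ge -(n-1)$ because $\pi_k$ is a local isometry. Fix $\tilde p_k\in \pi_k^{-1}(p_k)$. The preimage $\pi_k^{-1}(B_R(p_k))$ is connected (as $B_R(p_k)$ is non-orientable), has volume at most $2\mathrm{Vol}(B_R(p_k))\le V(R,n)$ by Bishop--Gromov, and at each point $\tilde y$ satisfies the lifted non-collapsing bound $\mathrm{Vol}(B_1(\tilde y))\ge v'(R,v,n)$ (since $\pi_k$ maps $B_1(\tilde y)$ onto $B_1(\pi_k(\tilde y))$ with multiplicity at most $2$, and Bishop--Gromov on $M_k$ controls the latter). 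A standard covering argument by disjoint half-balls then yields $d(\tilde p_k,\sigma_k\tilde p_k)\le C(R,v,n)$. Combined with $\mathrm{Vol}(B_1(\tilde p_k))\ge v$, Gromov precompactness plus Arzela--Ascoli give, along a subsequence, a pointed GH limit $(\tilde M_k,\tilde g_k,\tilde p_k)\xrightarrow{GH}(Y,d_Y,\tilde p)$ to a non-collapsed Ricci-limit space, together with an involutive limit isometry $\sigma\colon Y\to Y$ and a surjective $1$-Lipschitz map $\pi\colon Y\to X$ such that $\pi\circ\sigma=\pi$ and the induced map $Y/\sigma\to X$ is an isometry.

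\emph{Structure of $\sigma$ on the manifold part.} By Theorem \ref{vfeadadcsd}, $Y$ is orientable. I claim $\sigma$ is fixed-point-free on $\pi^{-1}(A_\epsilon(X))$ for any $\epsilon<\epsilon(n)$: if $\sigma(\tilde y)=\tilde y$ and $\tilde y_k\to\tilde y$, then $d(\tilde y_k,\sigma_k\tilde y_k)\to 0$; on the other hand, at a scale $r>0$ provided by $\pi(\tilde y)\in A_\epsilon(X)$, the ball $B_{r/2}(\pi_k(\tilde y_k))\subset M_k$ is $\epsilon$-close to Euclidean for large $k$ and hence (by Cheeger--Colding topological regularity) homeomorphic to a Euclidean ball, in particular simply connected; the cover trivializes over it into two disjoint open sheets containing $\tilde y_k$ and $\sigma_k\tilde y_k$, forcing $d(\tilde y_k,\sigma_k\tilde y_k)\ge r$, a contradiction. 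The same trivialization shows that $\pi$ restricts to an unramified $2$-to-$1$ cover of topological manifolds from $\pi^{-1}(A_\epsilon(X))\subset A_{2\epsilon}(Y)$ onto $A_\epsilon(X)$.

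\emph{Orientation reversal and conclusion.} Let $\omega_k$ be the Riemannian volume form on $\tilde M_k$, so $|\omega_k|=1$ and $\sigma_k^*\omega_k=-\omega_k$, and let $\omega_Y\in L^\infty(\Lambda^n T^*Y)$ be the volume form provided by Theorem \ref{thm:OrientabilityVolumeForm}. Since $\sigma^*\omega_Y$ is again a unit-norm volume form on $Y$, the uniqueness clause forces $\sigma^*\omega_Y=\pm\omega_Y$; the crux is to rule out the $+$ sign by passing the smooth identities $\sigma_k^*\omega_k=-\omega_k$ to the limit, and I expect this to be the main obstacle. It requires the convergence theory for $L^\infty$ top forms (equivalently, top metric currents) along non-collapsed Ricci-limit sequences that underlies the ramified double cover construction referenced as contribution (iii) in the introduction. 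Granted this, $\pi^{-1}(A_\epsilon(X))$ is an orientable manifold carrying a free orientation-reversing involution $\sigma$, so its quotient $A_\epsilon(X)$ is a non-orientable manifold. Since $X$ is boundaryless (non-collapsed Ricci limits of smooth manifolds have no boundary) and $\mathcal{H}^{n-1}(X\setminus A_\epsilon(X))=0$, Proposition \ref{thm:char1} yields that $X$ is non-orientable.
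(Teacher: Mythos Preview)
Your overall strategy---pass to the orientation double covers $(\tilde M_k,\tilde g_k,\tilde p_k)$, bound the displacement $\tilde d(\tilde p_k,\sigma_k\tilde p_k)$, extract a non-collapsed limit $(Y,d_Y,\tilde p)$ together with a limit involution $\sigma$ and projection $\pi:Y\to X$, and show $\sigma$ is free on $\pi^{-1}(A_\epsilon(X))$---matches the paper's proof of Theorem~\ref{nonstability} almost step for step. The paper obtains the sharper displacement bound $\widehat d_k(\hat p_k,\Gamma_k\hat p_k)<2R$ directly from Lemma~\ref{lemstupid} (lifting an orientation-reversing loop in $B_R(p_k)$), but your covering argument also suffices for precompactness.

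The genuine gap is exactly the one you flag: proving that $\sigma$ reverses orientation on the connected orientable manifold $\pi^{-1}(A_\epsilon(X))$. You propose to pass $\sigma_k^*\omega_k=-\omega_k$ to the limit via a convergence theory for $L^\infty$ top forms, and then write ``Granted this''. This convergence is not available as a black box in the paper (the remark after Theorem~\ref{vfeadadcsd} mentions intrinsic flat convergence but does not develop it), and without it the sign $\sigma^*\omega_Y=-\omega_Y$ is unproved. Since $\sigma^*\omega_Y=+\omega_Y$ would simply mean $A_\epsilon(X)$ is orientable, ruling it out is exactly the content of the theorem, so this step cannot be skipped.

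The paper closes this gap by a purely topological argument that bypasses form convergence entirely. Using that $Y$ is $\RCD(-(n-1),n)$ and $\dim_\HH(Y\setminus\widehat A)\le n-2$, Lemma~\ref{moltoconnesso} gives that $\widehat A\defeq\pi^{-1}(A_\epsilon(X))$ is a length space; pick any $\hat q\in\widehat A$ and a finite-length curve $\hat\gamma$ in $\widehat A$ from $\hat q$ to $\sigma\hat q$, and set $\gamma\defeq\pi\circ\hat\gamma$, a loop in $A_\epsilon(X)$. Approximate $\hat\gamma$ by curves $\hat\gamma_k$ in $\tilde M_k$ joining $\hat q_k$ to $\sigma_k\hat q_k$; the projections $\gamma_k\defeq\pi_k\circ\hat\gamma_k$ are loops in $M_k$ that do \emph{not} lift to closed loops in the orientation double cover, hence are orientation-reversing. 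Finally, since $\gamma$ sits in Reifenberg-regular balls, a neighborhood of $\gamma$ is bi-H\"older equivalent to an open set in $M_k$ for $k$ large (exactly as in the proof of Theorem~\ref{orientstable}), and this homeomorphism carries the orientation-reversing loop $\gamma_k$ to one homotopic to $\gamma$. Hence $\gamma$ reverses orientation and $A_\epsilon(X)$ is non-orientable. This replaces your analytic limit of volume forms with a stability-of-loops argument that needs only the Reifenberg topological regularity already used elsewhere.
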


As noted by Honda in \cite{HondaOrient}, the blow-up at a point $x \in \mathbb{RP}^2$ (or, more generally, at any point of a non-orientable manifold) provides an example of a pointed GH sequence of non-collapsed, non-orientable spaces converging to $\mathbb{R}^2$, which is orientable.
This does not contradict our Theorem \ref{thm:stabnonorRicciLimit}, as $B_r(x)$ is orientable for sufficiently small $r > 0$. Roughly speaking, to preserve non-orientability in the limit, it is necessary to have a uniformly bounded non-orientable subset along the sequence.

A corollary of Theorem \ref{thm:stabnonorRicciLimit} is that (non-)orientability is detected by the {\it effective regular part} of the manifold.
\begin{thm}
    Let $(M^n, g)$ satisfy  $\mathrm{Vol}(B_1(p)) \ge v > 0$ and $\mathrm{Ric}_{g} \ge -(n-1)$. Assume that $B_1(p)$ is non-orientable. For $\epsilon<\epsilon(n)$, if $r<r(n,v,\epsilon)$, then $M^n \setminus \overline{B_r(S_{\epsilon,r}^{n-2})}$ is non-orientable, where
    \begin{equation}
    S_{\epsilon,r}^{n-2}\defeq \{x\in M^n \, :\, \text{for no $r\le s<1$ the ball $B_s(x)$ is $(n-1,\epsilon)$-symmetric}\}.
\end{equation}
\end{thm}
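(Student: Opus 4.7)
The plan is to argue by contradiction, combining Theorem \ref{thm:stabnonorRicciLimit} with Proposition \ref{thm:char1}. Assume the statement fails: there exist parameters $n,v,\epsilon$ with $\epsilon<\epsilon(n)$, a sequence $r_k\downarrow 0$, and manifolds $(M_k^n,g_k,p_k)$ satisfying $\mathrm{Ric}_{g_k}\ge -(n-1)$ and $\mathrm{Vol}(B_1(p_k))\ge v$, with $B_1(p_k)$ non-orientable, while $V_k:=M_k\setminus \overline{B_{r_k}(S^{n-2}_{\epsilon,r_k}(M_k))}$ is orientable for every $k$. By Gromov compactness I extract a subsequence along which $(M_k,g_k,p_k)\xrightarrow{GH}(\XX,d,p)$ converges to a non-collapsed Ricci limit space; Theorem \ref{thm:stabnonorRicciLimit} (applied with $R=1$) then yields that the limit $(\XX,d,p)$ is itself non-orientable.

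Next I would identify the GH limit of $V_k$. Set $S^{n-2}_\epsilon(\XX):=\{x\in\XX:\text{no ball }B_s(x)\text{ with }0<s\le 1\text{ is }(n-1,\epsilon)\text{-symmetric}\}$. By Cheeger--Naber quantitative stratification, $\dim_\HH S^{n-2}_\epsilon(\XX)\le n-2$, hence $\HH^{n-1}(S^{n-2}_\epsilon(\XX))=0$, and the standard upper semicontinuity of quantitative symmetry under GH convergence implies that $V_k$ Kuratowski-converges to $V_\infty:=\XX\setminus S^{n-2}_\epsilon(\XX)$. For $\epsilon<\epsilon(n)$ sufficiently small, $\epsilon$-regularity forces every point of $V_\infty$ to lie in $A_{\epsilon'}(\XX)$ for some $\epsilon'=\epsilon'(\epsilon)$, so $V_\infty$ is an open topological manifold with $\HH^{n-1}(\XX\setminus V_\infty)=0$.

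The crux is to transfer orientability from $V_k$ to $V_\infty$: once that is done, Proposition \ref{thm:char1} applied to $V_\infty\subseteq \XX$ immediately gives that $\XX$ is orientable, contradicting the previous paragraph. I see two natural routes. The topological one uses the Cheeger--Colding--Naber bi-Hölder equivalence between regions of the $M_k$ and the corresponding regions of $\XX$ on compact subsets of $V_\infty$ to transport the smooth orientation of $V_k$ to a local orientation of $V_\infty$; patching these across a covering of $V_\infty$ by small, simply connected balls then yields a globally consistent orientation. The analytic alternative is to take the unit volume forms $\omega_k$ on $V_k$ coming from the given orientations, extract an $L^\infty$-weak$^*$ subsequential limit $\omega_\infty$ on $V_\infty$ using the convergence theory for differential forms under mGH convergence of $\RCD$ spaces, verify $|\omega_\infty|=1$ almost everywhere (exploiting the bi-Hölder closeness and the nondegeneracy of $\omega_k$ at controlled scales), and invoke Theorem \ref{thm:OrientabilityVolumeForm}. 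This transfer step is the main obstacle, requiring the quantitative manifold structure of the regular part and some care in matching orientations in the limit; once it is established, the contradiction, and hence the theorem, follows at once.
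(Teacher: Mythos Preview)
Your contradiction setup is exactly right and matches what the paper intends (it states the result as a corollary of Theorem~\ref{thm:stabnonorRicciLimit} without further detail): extract a limit $(\XX,d,p)$ and apply Theorem~\ref{thm:stabnonorRicciLimit} to get $B_1(p)\subset\XX$ non-orientable. But you are making the closing step harder than it needs to be.

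You frame the last step as ``transfer orientability from $V_k$ to all of $V_\infty$'', treat it as a global statement, and then worry about patching local orientations or passing volume forms to the limit. Neither is needed. Run the argument in the other direction: by Remark~\ref{rmk.local.ori}, non-orientability of $B_1(p)\subset\XX$ already hands you a single orientation-reversing loop $\gamma$ in $A_{\epsilon'}(\XX)\cap B_1(p)$ for $\epsilon'\ll\epsilon$. The image of $\gamma$ is compact in the open manifold part, so by $\epsilon$-regularity there is a uniform $s>0$ with $B_s(y)$ $(n-1,\epsilon/2)$-symmetric for every $y\in B_s(\gamma)$. Under GH convergence this passes to $M_k$: for large $k$ the approximating loops $\gamma_k$ satisfy $B_{s/2}(\gamma_k)\cap S^{n-2}_{\epsilon,r_k}=\emptyset$, hence $\gamma_k\subset V_k$ once $r_k<s/4$. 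Now the proof of Theorem~\ref{orientstable} (either Proof~I via the Reifenberg homeomorphism on a neighborhood of $\gamma$, or Proof~II) shows verbatim that $\gamma_k$ is orientation-reversing, contradicting orientability of $V_k$.

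So the ``transfer'' you actually need is only local---a tubular neighborhood of one loop---and is already carried out in the paper. Your analytic route via weak-$*$ limits of $\omega_k$ would require controlling $|\omega_k|$ near the effective singular set and is both harder and unnecessary; the full Kuratowski convergence of $V_k$ is likewise a red herring, since only the one inclusion (compact subsets of the limit regular set eventually lie in $V_k$) is used, and only for a fixed neighborhood of $\gamma$.
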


 \begin{proof}[Proof of Theorems \ref{thm:UniformLocalOrientability} and \ref{thm:MaximalVolumeGrowth} given Theorem \ref{thm:stabnonorRicciLimit}]
 	
If the statement were false,  we could find a sequence of smooth $4$-manifolds $(M^4_k,g_k,p_k) \xrightarrow{GH} ( C(Z^3), d, p)$ as $k\to \infty$, with ${\rm Ric}_{g_k} \ge - 1/k$ and uniform volume non-collapsing such that $B_1(p_k)$ is not orientable. As a consequence of \cite{brupisem}, $Z^3$ is orientable (see Examples \ref{ex:RCD(2,3)or}, \ref{ex:4cones}). Our stability result Theorem \ref{thm:stabnonorRicciLimit} implies that $Z^3$ is not orientable, a contradiction.
 
 The proof of Theorem \ref{thm:MaximalVolumeGrowth} is analogous, but the contradicting sequence is obtained by blowing down $(M^4,g)$. Indeed, the blow-down of a manifold with Euclidean volume growth is a metric cone and, again by \cite{brupisem}, the cross-section of the latter is orientable.
 \end{proof}

\subsection{Ramified Orientable Double Cover}\label{intro:cover}

It is well-known that any non-orientable Riemannian manifold $(M^n, g)$ admits a degree-two Riemannian cover $\pi : \widehat{M} \to M$ which is orientable. We can think of $M$ as the quotient of $\widehat{M}$ by an isometric, free involution $\Gamma$, where $\widehat{M}$ is orientable.

However, in the context of singular spaces with Ricci curvature bounded below, the previous statement is too strong to hold in full generality. It becomes necessary to consider \textit{ramified covers} with singular points, corresponding to the fixed points of the involution.

\begin{ex}[Spherical Suspension]
    Consider $\XX^3 \defeq \Sigma(\mathbb{RP}^2)$, the spherical suspension over the two-dimensional projective space endowed with the standard metric. It turns out that $\XX^3$ is a non-collapsed $\RCD(0,3)$ space which is non-orientable according to our definition. A natural double cover is the map $\pi : \Sigma(S^2) \to \Sigma(\mathbb{RP}^2)$, which acts on $S^2$ in the obvious way. Away from the tips, this map serves as a Riemannian double cover; however, the tips are fixed points of the involution.
\end{ex}

Building on the previous example, we propose a natural construction to exhibit a ramified double cover of a  non-orientable non-collapsed $\RCD(-(n-1),n)$ space $(\XX, d)$ without boundary. We begin by considering the effective manifold part $A_\epsilon(X)$ for sufficiently small $\epsilon < \epsilon(n)$. This set is a non-orientable topological manifold, with a complement of Hausdorff dimension smaller than or equal to $n-2$.

Next, we take the standard double cover of $A\defeq A_\epsilon(X)$ and complete it to obtain a complete metric space. As a result, we obtain a new geodesic metric measure space $(\widehat{\XX}, \widehat{d}, \mathcal{H}^n)$, along with a map $\pi: \widehat{\XX} \to \XX$, which serves as a double cover when restricted to $\widehat{A}\defeq\pi^{-1}(A)$, and an involution $\Gamma:\widehat\XX\rightarrow\widehat\XX$ such that $\pi\circ\Gamma=\pi$.

In this construction, singular points may emerge when completing the double cover of the manifold part $A_\epsilon(\XX)$. This phenomenon is illustrated by the spherical suspension over $\mathbb{R P}^2$, where the manifold part corresponds to the complement of the tip. In the absence of singular points, it is straightforward to verify that $\widehat{\XX}$ is an $\RCD(-(n-1),n)$ space locally isometric to $(\XX, d)$. However, the presence of singular points complicates this verification (see Section \ref{intro:Open} below). At present, we can only establish the $\RCD$ property of $\widehat{\XX}$ when $\XX$ is smoothable, relying on an approximating sequence $(M^n_k, g_k, p_k) \xrightarrow{GH} (\XX, d, p)$ and the associated sequence of double covers $(\widehat{M}^n_k, \widehat{g}_k, \hat{p}_k) \xrightarrow{GH} (\widehat{\XX}, \widehat{d}, \hat{p})$. Below, we focus on Ricci limit spaces, with further details on the general $\RCD$ case provided in Theorem \ref{defndouble}.

\begin{thm}[Ramified Double Cover for Ricci Limit Spaces]\label{defndoubleIntro}
	Let $(M^n_k, g_k, p_k) \xrightarrow{GH} (\XX, d, p)$ be a sequence satisfying the uniform bounds $\mathrm{Vol}(B_1(p_k)) \ge v > 0$ and $\mathrm{Ric}_{g_k} \ge -(n-1)$. Assume that $(\XX, d, \mathcal{H}^n)$ is non-orientable (according to Definition \ref{dfn:orientable}). Then there exists a non-collapsed $\RCD(-(n-1), n)$ space $(\widehat{\XX}, \widehat{d}, \mathcal{H}^n)$, along with an isometric involution $\Gamma$ acting on $\widehat{\XX}$, such that the following properties hold.
	\begin{itemize}
		\item[(i)] $\widehat{\XX}$ is orientable and $\pi: \widehat{\XX} \to \XX$ is the projection map with respect to the action  $\Gamma$.
		
		\item[(ii)] For every open set $A \subseteq \XX$ which is a connected topological manifold, $\pi: \widehat{A} \defeq \pi^{-1}(A) \to A$ is a double cover. Moreover, $\widehat{A}$ is connected if and only if $A$ is non-orientable.
	\end{itemize}
	The pair $((\widehat{\XX}, \widehat{d}, \mathcal{H}^n), \pi)$ is unique up to isomorphism, in the sense that if $((\widehat{\XX}', \widehat{d}', \mathcal{H}^n), \pi')$ is any other pair, then there exists an isometry $\Phi: (\widehat{\XX}, \widehat{d}) \rightarrow (\widehat{\XX}', \widehat{d}')$ such that $\pi' \circ \Phi = \pi$.
\end{thm}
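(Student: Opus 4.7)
The plan is to construct $\widehat{X}$ as a pointed GH-limit of smooth orientable double covers of the $M_k$, and then identify it with the completion described in the introduction through the uniqueness statement. Since $X$ is non-orientable and each $M_k$ is a smooth manifold, the stability of orientability (Theorem \ref{vfeadadcsd}) forces, after passing to a subsequence, every $M_k$ to be non-orientable. Let $\pi_k : \widehat{M}_k \to M_k$ denote the classical smooth orientable Riemannian double cover, with the pulled-back metric $\hat g_k$ and deck involution $\Gamma_k$. Since $\pi_k$ is a local isometry, $\mathrm{Ric}_{\hat g_k} \geq -(n-1)$; since it is $2$-to-$1$, the non-collapsing assumption is preserved. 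Gromov's compactness theorem combined with the non-collapsed Ricci limit theory then produces a subsequential pointed GH limit $(\widehat{M}_k, \hat g_k, \hat p_k) \to (\widehat{X}, \widehat{d}, \mathcal{H}^n, \hat p)$, where $(\widehat{X}, \widehat{d}, \mathcal{H}^n)$ is a non-collapsed $\mathrm{RCD}(-(n-1),n)$ space without boundary.

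Both $\Gamma_k$ and $\pi_k$ pass to the limit: equivariant GH convergence applied to the $\mathbb{Z}/2$-action generated by $\Gamma_k$ extracts an isometric involution $\Gamma : \widehat{X} \to \widehat{X}$, and an Arzel\`a--Ascoli argument yields a $1$-Lipschitz limit $\pi : \widehat{X} \to X$ satisfying $\pi \circ \Gamma = \pi$. Orientability of $\widehat{X}$ is then immediate from Theorem \ref{vfeadadcsd}, since each $\widehat{M}_k$ is orientable; this gives (i).

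For (ii), fix any open connected topological manifold $A \subseteq X$. The key step is to show that on the effective manifold part $\pi$ restricts to a genuine topological double cover. By the $\epsilon$-regularity theorem \cite{Cheeger-Colding97I, DPG17}, each point of $A_\epsilon(X)$ has a neighborhood GH-close to a Euclidean ball whose $\pi_k$-preimage in $\widehat{M}_k$ splits into two disjoint, isometric, Euclidean-like balls. Passing to the limit, $\pi^{-1}$ of such a neighborhood is a disjoint union of two isometric copies mapping isometrically onto the base, so $\pi$ is a topological double cover from $\pi^{-1}(A \cap A_\epsilon(X))$ onto $A \cap A_\epsilon(X)$. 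Since $A \setminus A_\epsilon(X)$ has Hausdorff codimension at least two, the standard covering space theory extends this to show that $\widehat{A} \defeq \pi^{-1}(A) \to A$ is a topological double cover. Finally, $\widehat{A}$ inherits an orientation from $\widehat{X}$, so it is the orientation double cover of $A$; this is connected exactly when $A$ is non-orientable.

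For uniqueness, suppose $((\widehat{X}', \widehat{d}', \mathcal{H}^n), \pi')$ is any other such pair, and apply (ii) to $A = A_\epsilon(X)$: by Proposition \ref{thm:char1}, non-orientability of $X$ forces $A_\epsilon(X)$ to be a non-orientable connected topological manifold with complement of Hausdorff codimension at least two. Then $\pi^{-1}(A_\epsilon(X))$ and $\pi'^{-1}(A_\epsilon(X))$ are each \emph{the} orientable topological double cover of $A_\epsilon(X)$, so there is a unique homeomorphism $\Phi_0$ between them intertwining the projections; since both metrics pull back to $d$ via local isometries, $\Phi_0$ is automatically an isometry, which density extends uniquely to an isometry $\Phi : \widehat{X} \to \widehat{X}'$ with $\pi' \circ \Phi = \pi$. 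The main obstacle throughout is the careful passage of the covering structure through GH convergence on the effective regular part, specifically combining equivariant GH convergence of the $\mathbb{Z}/2$-actions with biLipschitz regularity of limit charts on $A_\epsilon$ to rule out degenerate behavior such as the two sheets collapsing onto each other in the limit.
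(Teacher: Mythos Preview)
Your overall strategy coincides with the paper's: pass the smooth orientable double covers $\widehat M_k$ to a pointed GH limit, carry the involutions $\Gamma_k$ and projections $\pi_k$ along, and verify the covering structure over the effective regular part. The paper organizes this by first constructing $\widehat X$ intrinsically as the completion of the orientation double cover of $A_\epsilon(X)$ (Theorem~\ref{defndouble}), and then proving separately that the GH limit of the $\widehat M_k$ agrees with this intrinsic object (Theorem~\ref{nonstability}); you collapse these two steps into one. That is a legitimate reorganization.

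However, there is a genuine gap at precisely the point you flag as ``the main obstacle.'' You assert that over a regular ball the preimage ``splits into two disjoint, isometric, Euclidean-like balls'' and that this persists in the limit, and you propose to justify this via ``biLipschitz regularity of limit charts on $A_\epsilon$.'' That is not the mechanism. What is needed is a two-sided quantitative control on the displacement $\widehat d_k(\hat x_k,\Gamma_k\hat x_k)$, and both bounds come from the equivalence between local orientability and displacement (Lemma~\ref{lemstupid} in the paper, whose smooth case is elementary path-lifting):
\begin{itemize}
\item[\textbullet] \emph{Upper bound at the basepoint.} Non-orientability of $X$ gives a non-orientable ball $B_R(p)$; by the local form of Theorem~\ref{vfeadadcsd}, $B_R(p_k)$ is non-orientable for large $k$, and hence any orientation-reversing loop forces $\widehat d_k(\hat p_k,\Gamma_k\hat p_k)<2R$. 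Without this, equivariant GH convergence need not produce a nontrivial limit involution: if the displacement were unbounded, $\Gamma_k$ would escape the pointed picture.
\item[\textbullet] \emph{Lower bound over regular points.} For $x\in A_\epsilon(X)$ and $x_k\to x$, the ball $B_{r_x}(x_k)$ is (for small $\epsilon$) homeomorphic to a Euclidean ball, hence orientable; any path in $\widehat M_k$ from $\hat x_k$ to $\Gamma_k\hat x_k$ projects to an orientation-reversing loop, which must exit $B_{r_x}(x_k)$ and therefore has length $\ge 2r_x$. Thus $\widehat d_k(\hat x_k,\Gamma_k\hat x_k)\ge 2r_x$ uniformly in $k$, and passing to the limit gives $\Gamma\hat x\ne\hat x$.
\end{itemize}
Neither bound follows from bi-Lipschitz or bi-H\"older chart regularity alone; the orientability of the small balls is doing the work. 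Once you insert these two displacement estimates, your argument goes through and matches the paper's.

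A smaller point: your extension of the double cover from $A\cap A_\epsilon(X)$ to a general connected topological-manifold open set $A$ by ``standard covering space theory'' across a codimension-two set is not automatic; you still need to know that $\pi^{-1}(x)$ has two points for $x\in A\setminus A_\epsilon(X)$, which again reduces to showing $\Gamma\hat x\ne\hat x$ there.
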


Beyond its theoretical significance, Theorem \ref{defndoubleIntro} is a crucial component in establishing the stability result given in Theorem \ref{thm:stabnonorRicciLimit}. A similar result is well-known in the context of Alexandrov spaces \cite{HarveySearle17}.

\subsection{Locally (Non-)Orientable Points}

Let $(\XX, d, \mathcal{H}^n)$ be an $\RCD(-(n-1),n)$ space without boundary. We say that $x \in \XX$ is \textit{locally orientable} if there exists an $r > 0$ such that $B_r(x)$ is orientable according to Definition \ref{dfn:orientable}. Otherwise, we say that $x \in \XX$ is \textit{locally non-orientable}.

\begin{ex}[Locally Non-Orientable Points]
	The simplest example of a locally non-orientable point is the tip of $\XX^3 = C(\mathbb{RP}^2)$. In the setting of Ricci limit spaces, an example is provided by the singular points of $\XX^5 = \Sigma(S^2 \times \mathbb{RP}^2)$, as discussed in Example \ref{ex:Otsu}. %In both cases, the local non-orientability can be verified using the stability result in Theorem \ref{vfeadadcsd}.
\end{ex}

In the setting of smoothable $\RCD$ spaces we have the following structure theorem for locally non-orientable points.

\begin{thm}[Local Non-Orientable Points]\label{thm:localnonorientable}
	Let $(M^n_k, g_k, p_k) \xrightarrow{GH} (\XX, d, p)$ be a sequence satisfying the uniform bounds $\mathrm{Vol}(B_1(p_k)) \ge v > 0$ and $\mathrm{Ric}_{g_k} \ge -(n-1)$. Then the following are equivalent:
	\begin{enumerate}
		\item $x \in \XX$ is locally non-orientable;
		\item $X$ is non-orientable and $\pi^{-1}(x)$ is a singleton, where $\pi : \widehat \XX \to \XX$ denotes the ramified double cover;
		\item the cross-section of every tangent cone at $x$ is non-orientable.
	\end{enumerate}
	Furthermore, the set of locally non-orientable points $\XX_{\rm LNO}\subseteq \XX$ is empty for dimensions $2 \le n \le 4$, and satisfies the volume bound $\HH^{n-5}(B_1(p)\cap \XX_{LNO}) \le C(n,v)$ when $n\ge 5$.
\end{thm}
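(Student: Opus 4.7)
The proof plan is to establish the equivalences $(1) \Leftrightarrow (3)$ and $(2) \Leftrightarrow (3)$ separately, and then argue the structure of $\XX_{\rm LNO}$ dimension by dimension. When $\XX$ is orientable all three statements fail (every ball is orientable as an open subset of $A_\epsilon(\XX)$, no tangent cone cross-section can be non-orientable by Theorem \ref{vfeadadcsd} together with Example \ref{ex:cones}, and $(2)$ is vacuous), so throughout I assume $\XX$ non-orientable.

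For $(1) \Leftrightarrow (3)$ I proceed by rescaling combined with the stability results. If $x$ is locally non-orientable, fix any tangent cone $C(Z)$ realized along scales $r_i \to 0$. Since every $B_{r_i}(x)$ is non-orientable, the contrapositive of stability of orientability (Theorem \ref{vfeadadcsd}) yields $k(i)$ with $B_{r_i}(x_{k(i)}) \subseteq M^n_{k(i)}$ non-orientable, where $x_k \to x$. Rescaling by $r_i^{-2}$ preserves the Ricci lower bound for $r_i < 1$, and Bishop--Gromov gives volume non-collapsing at the new unit scale, so a diagonal extraction plus Theorem \ref{thm:stabnonorRicciLimit} (with $R=1$) shows that $C(Z)$ is non-orientable, hence $Z$ is non-orientable by Example \ref{ex:cones}. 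Conversely, if some cross-section $Z$ at $x$ is non-orientable while $B_r(x)$ is orientable for some $r > 0$, then each $B_{r_i}(x) \subseteq B_r(x)$ remains orientable by restriction (via Proposition \ref{thm:char1}), and Theorem \ref{vfeadadcsd} propagates orientability to $C(Z)$, contradicting Example \ref{ex:cones}.

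For $(2) \Leftrightarrow (3)$ I rely on the ramified double cover $\pi : \widehat\XX \to \XX$ with isometric involution $\Gamma$ from Theorem \ref{defndoubleIntro}. If $\pi^{-1}(x) = \{x_1, x_2\}$, neither point is fixed by $\Gamma$, so by construction $\pi$ is a local isometry near each $x_i$, identifying tangent cones at $x$ with tangent cones at $x_i$ in $\widehat\XX$. Since $\widehat\XX$ is orientable, every ball around $x_i$ is orientable, so $x_i$ is locally orientable; applying the already proven $(1) \Rightarrow (3)$ at $x_i \in \widehat\XX$ forces all cross-sections at $x_i$ (hence at $x$) to be orientable, giving $(3) \Rightarrow (2)$ by contraposition. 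For the reverse, if $\pi^{-1}(x) = \{\hat x\}$ then $\hat x$ is a fixed point of $\Gamma$; given a tangent cone $C(Z)$ at $x$ along $r_i \to 0$, I extract along the same scales a tangent cone $C(\hat Z)$ at $\hat x$ in $\widehat\XX$, using that $\widehat\XX$ is itself a non-collapsed Ricci limit via the smooth orientation double covers $\widehat{M^n_k}$ of $M^n_k$. Then $\hat Z$ is a connected $\RCD(n-2,n-1)$ space without boundary, orientable because $\hat x$ is locally orientable in the orientable space $\widehat\XX$. The rescaled covering maps converge to $\hat\pi : C(\hat Z) \to C(Z)$, equivariant under the limiting involution, whose only fixed point is the tip, so $\hat\pi$ restricts to an unramified double cover $\hat Z \to Z$; connectedness and orientability of $\hat Z$ then force $Z$ to be non-orientable.

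For the structure of $\XX_{\rm LNO}$, in dimensions $n \le 4$ every tangent cone cross-section at every point is orientable (for $n = 2$ they are circles, and for $n = 3, 4$ this is \cite{brupisem}, recorded in Examples \ref{ex:RCD(2,3)or} and \ref{ex:4cones}), so $(3)$ fails everywhere and $\XX_{\rm LNO} = \emptyset$. For $n \ge 5$ I claim $\XX_{\rm LNO} \subseteq S^{n-5}$, the standard stratum of points admitting no $(n-4)$-symmetric tangent cone. Indeed, if $x \notin S^{n-5}$, some tangent cone at $x$ splits as $\RR^{n-4} \times C(Y^3)$; by \cite{brupisem} the factor $Y^3$ is orientable, so the manifold part of $\RR^{n-4} \times C(Y^3)$ is an orientable open manifold whose complement has Hausdorff dimension at most $n-2$, and Proposition \ref{thm:char1} yields orientability of this tangent cone, so $(3)$ fails at $x$. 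The bound $\HH^{n-5}(B_1(p) \cap \XX_{\rm LNO}) \le C(n,v)$ then follows from the Cheeger--Naber quantitative stratification for $S^{n-5}$. The most delicate step throughout is the transfer of the tangent cone structure along the ramified cover at fixed points of $\Gamma$, in particular the connectedness of $\hat Z$ and the matching of scales between $C(Z)$ and $C(\hat Z)$, which rely crucially on the smoothability of $\widehat\XX$ provided by Theorem \ref{defndoubleIntro} via the sequence $\widehat{M^n_k}$.
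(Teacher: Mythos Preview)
Your $(1)\Leftrightarrow(3)$ argument is correct and essentially the same as the paper's, though you route it through the approximating manifolds $M^n_k$ by a diagonal argument, whereas the paper applies Theorem \ref{nonstability} and Theorem \ref{orientstable} directly to the rescaled sequence $(\XX,r_i^{-1}d,x)$ (this is legitimate since $\widehat\XX$ is $\RCD$ by Theorem \ref{defndoubleIntro}, so its rescalings satisfy the hypotheses of Theorem \ref{nonstability}). Your volume-estimate argument is likewise in the same spirit as the paper's.

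The real issue is your $(2)\Rightarrow(3)$ step. You assert that the limiting involution $\hat\Gamma$ on $C(\hat Z)$ has only the tip as fixed point, and hence that $\hat Z\to Z$ is an honest connected double cover. Neither claim is justified. Nothing you have written rules out $\hat\Gamma$ being the identity (in which case $\hat\pi$ is an isometry and $Z\cong\hat Z$ is orientable), and even once non-triviality is established, the fixed set of $\hat\Gamma$ need not reduce to the tip: the fixed locus of $\Gamma$ in $\widehat\XX$ can have positive dimension, so its blow-up at $\hat x$ can be non-trivial. To repair this you would need a volume argument (use $\pi^{-1}(B_r(x))=B_r(\hat x)$ when $\Gamma\hat x=\hat x$ to get $\HH^{n-1}(\hat Z)=2\HH^{n-1}(Z)$, forcing $\hat\Gamma\ne\mathrm{id}$), and then invoke Theorem \ref{nonstability} together with the uniqueness in Theorem \ref{defndouble} to identify $C(\hat Z)$ as the ramified double cover of $C(Z)$. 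But checking the hypotheses of Theorem \ref{nonstability} here requires knowing that $B_{Rr_i}(x)$ is non-orientable for all small $r_i$, i.e.\ that $(2)\Rightarrow(1)$ --- which is exactly what Lemma \ref{lemstupid} gives in one line.

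The paper accordingly bypasses your tangent-cone route entirely: it proves $(1)\Leftrightarrow(2)$ directly from Lemma \ref{lemstupid}, since $\pi^{-1}(x)$ is a singleton iff $\Delta x=\widehat d(\hat x,\Gamma\hat x)=0$ iff $B_r(x)$ is non-orientable for every $r>0$. This is both shorter and avoids the delicate limit analysis you flag in your last paragraph.
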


Example \ref{ex:Otsu} illustrates the sharpness of the previous statement. Consider the non-collapsed Ricci limit space $\XX^5 = \Sigma(S^2 \times \mathbb{RP}^2)$. The ramified double cover is given by $\pi : \Sigma(S^2 \times S^2) \to \Sigma(S^2 \times \mathbb{RP}^2)$, with the involution $\Gamma$ fixing two singular points. Both of these points are locally non-orientable, and the blow-up in each case is $C(S^2 \times \mathbb{RP}^2)$. The Hausdorff dimension of the set of locally non-orientable points is precisely $n - 5$.

\subsection{Open Problems and Future Developments}\label{intro:Open}

Although inherently synthetic, the orientation theory proposed in this paper is both satisfactory and complete only within the framework of Ricci limit spaces. In the general $\RCD$ setting, the primary open question concerns the $\RCD$ regularity of the ramified double cover.

\medskip
\noindent
{\bf Question.} Let $(\XX, d, \HH^n)$ be a non-collapsed $\RCD(-(n-1),n)$ space without boundary. Is the ramified double cover (see Theorem \ref{defndouble}) $(\widehat{\XX}, \widehat{d},\HH^n)$ an $\RCD(-(n-1),n)$ space?
\medskip

A positive answer to this question would yield two immediate corollaries:
\begin{itemize}
	\item[(i)] stability of the non-orientability property in the $\RCD$ framework, as stated in Theorem \ref{nonstability};
	
	\item[(ii)] a version of Theorem \ref{thm:localnonorientable}, the structure theorem for $\XX_{LNO} \subseteq \XX$, applicable to $\RCD$ spaces.
\end{itemize}

While the statement of (i) remains identical to that for Ricci limit spaces, the version of (ii) in the $\RCD$ context will be weaker. Specifically, cones such as $C(\mathbb{RP}^2)$ may appear within the class of $\RCD$ spaces, implying that the Hausdorff dimension of $\XX_{LNO}$ should be bounded by $n-3$ rather than $n-5$, and we are guaranteed that $\XX_{LNO} = \emptyset$ only when $n=2$.

\begin{rem}
The fact that the ramified double cover  $(\widehat{\XX}, \widehat{d},\HH^n)$ is still $\RCD(-(n-1),n)$
for Ricci limits is primarily used to conclude that the effective manifold part of $\widehat{\XX}$ is connected,
which is crucial in the proof of stability of non-orientability.

We briefly elaborate on the difficulties faced while attempting to prove this fact for general non-collapsed $\RCD(-(n-1),n)$ spaces $({\XX}, {d},\HH^n)$ without boundary.

First, letting $\widehat G\defeq \{\hat x:\Gamma\hat x\ne \hat x\}\supseteq\widehat A$ be the points that are not fixed by the involution $\Gamma$, it holds that $\dim_\HH(\widehat\XX\setminus\widehat G)\le n-2$.
We can prove that every point of $\widehat G$ has a neighborhood  isometric to a neighborhood of its projection in $(\XX, d, \HH^n)$. Hence, we can easily prove the $\RCD(K,N)$ condition locally on the open set $\widehat G$, in the sense that the $(K,N)$-Bochner inequality holds for test functions with support contained  in $\widehat G$. 
If we knew that  $(\widehat{\XX}, \widehat{d},\HH^n)$ is $\RCD(K',\infty)$ for some $K'$, then we would have enough regularity to extend the $(K,N)$-Bochner inequality  to the whole space. 
It is then clear that the difficulties lie in examining $(\widehat{\XX}, \widehat{d},\HH^n)$ locally around the fixed points of the involution.

We remark that the $\RCD$ condition can be seen as a convexity of suitable entropy functionals along geodesics in the space of probability measures. Hence, what prevents us from completing the proof is the fact that many geodesics starting and ending in $\widehat G$ may pass through the singular set $\widehat{\XX}\setminus\widehat G$. Excluding this possibility, i.e.\ proving that $\widehat G$ is, in a sense, quantitatively convex, would allow us to conclude (notice that this would be  implied by the  $\RCD$ property of $(\widehat{\XX}, \widehat{d},\HH^n)$).
\end{rem}

Another promising research direction involves extending the theory to the class of {\it $\RCD$ spaces with boundary}. In the setting of Ricci limit spaces with convex boundaries, we expect this extension to be relatively straightforward. However, the general $\RCD$ framework may present greater challenges, as fundamental structural questions—such as the characterization of tangent cones at boundary points and the stability of the boundary—remain open in this broader context \cite{brueboundary}.

\subsection*{Acknowledgements}{
The authors wish to thank S.\ Honda for his valuable comments on a preliminary version of the manuscript.
Most of this work was carried out while C.B.\ was a PhD student at  Scuola Normale Superiore visiting E.B.\ and A.P.\ at Bocconi University.
This material is based upon work supported by the National Science Foundation under Grant No.\ DMS-1926686.}

\section{Equivalent Definitions of Orientability}
\label{ori.equiv.sec}

In this section, we provide a more detailed discussion of the equivalent definitions of orientability within the $\RCD$ framework. Definition \ref{dfn:orientable} corresponds to statement (1) in Theorem \ref{vefdacscdcsd} below. For foundational material on calculus, forms, and currents in $\RCD$ spaces, we refer the reader to Section \ref{sec:Preliminaries}.

%\bigskip
In the following theorem and throughout the paper, when we refer to an open subset $A$ of a non-collapsed $\RCD$ space as a topological manifold, we mean that $A$ inherits the topology of the ambient space. Moreover, the dimension of $A$ as a manifold is the essential dimension of the $\RCD$ space.

\begin{thm}[Equivalent Definitions of Orientability]\label{vefdacscdcsd}
	Let $(\XX,d,\HH^n)$ be a non-collapsed $\RCD(-(n-1),n)$ space with no boundary. Then the following are equivalent.
	\begin{enumerate}
		\item Every open $A\subseteq\XX$ which is a topological manifold is orientable.
		\item There exists an open $A\subseteq\XX$ which is an orientable topological manifold with $\HH^{n-1}(\XX\setminus A)=0$.
		\item For $\epsilon<\epsilon(n)$, $A_\epsilon(\XX)$ defined in \eqref{eq:Aeps} is orientable.
		\item There exists $0\neq \omega\in L^\infty(\Lambda^n T\XX)$ such that  $|\omega|$ is constant and  $\omega\,\cdot\,\eta\in W^{1,2}(\XX)$ for every $\eta\in \mathrm{TestForms}_{n}(\XX)$ with compact support.
		\item There exists $0\ne \omega\in L^\infty(\Lambda^n T\XX)$ such that, for every $\eta\in \mathrm{TestForms}_{n-1}(\XX)$ with compact support, it holds $\int_\XX \omega\,\cdot\,d\eta\,\dd\HH^n=0$.
		\item There exists a nonzero  metric $n$-current $T$, with $|T|\in L^\infty$ and no boundary.
	\end{enumerate}
	If any of these holds, the form $\omega$ is unique up to scalar multiplication by $c\in \RR\setminus \{0\}$, and satisfies:
	\begin{enumerate}
		\item[(i)] $|\omega|$ is constant;
		\item[(ii)] for every $\eta\in \mathrm{TestForms}_{n}(\XX)$, $\nabla (\omega\,\cdot\,\eta)=\omega\,\cdot\,\nabla \eta$ holds $\HH^n$-a.e.
	\end{enumerate} 
	Similarly, the current $T$ as above is unique up to scalar multiplication by $c\in \RR\setminus \{0\}$.
\end{thm}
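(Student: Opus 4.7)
The plan is to close two cycles: first the topological one $(1)\Rightarrow(3)\Rightarrow(2)\Rightarrow(1)$, then the analytic one $(3)\Rightarrow(4)\Rightarrow(5)\Rightarrow(6)\Rightarrow(3)$. $(1)\Rightarrow(3)$ is immediate because the $\epsilon$-regularity theorem makes $A_\epsilon(\XX)$ an open topological manifold. $(3)\Rightarrow(2)$ holds since $\dim_\HH(\XX\setminus A_\epsilon(\XX))\le n-2$. The core step is $(2)\Rightarrow(1)$: for any open topological manifold $B\subseteq \XX$, the open set $B\cap A$ has $\HH^{n-1}$-negligible complement in $B$, and since a set of Hausdorff dimension $\le n-2$ cannot locally disconnect a topological $n$-manifold, the trace of $B\cap A$ on every Euclidean chart of $B$ is connected. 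The orientation of $A$ restricted to $B\cap A$ thus extends uniquely across each chart ball (homeomorphic to $\RR^n$), and the local extensions patch consistently because they already agree on $A$.

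For the analytic direction, the work happens on $A_\epsilon(\XX)$. For $(3)\Rightarrow(4)$, I let $\omega$ be the unit top form compatible with the orientation (defined in harmonic or biLipschitz charts coming from the non-collapsed $\RCD$ structure) and extend it by zero to the $\HH^n$-negligible complement; the Sobolev bound $\omega\cdot \eta\in W^{1,2}(\XX)$ is then inherited from $\eta$ via the $L^2$-calculus of forms on $\RCD$ spaces. For $(4)\Rightarrow(5)$, the Leibniz-type identity $\nabla(\omega\cdot\eta)=\omega\cdot\nabla\eta$, verified chartwise since $\omega$ is locally parallel, combined with the constancy of $|\omega|$, expresses that $\omega$ is weakly parallel; together with Stokes' theorem for compactly supported test $(n-1)$-forms on $\RCD$ spaces, this yields $\int_\XX \omega\cdot d\eta\,\dd\HH^n=0$. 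For $(5)\Rightarrow(6)$, I set
\begin{equation*}
T(f_0,f_1,\dots,f_n)\defeq \int_\XX f_0\,\omega\cdot(df_1\wedge\cdots\wedge df_n)\,\dd\HH^n
\end{equation*}
for Lipschitz $f_i$ with $f_0$ compactly supported. Multilinearity, continuity, and locality follow from the Lipschitz calculus on $\RCD$ spaces, the bound $|T|\in L^\infty$ from $\|\omega\|_{L^\infty}$, and $\partial T=0$ is exactly (5) after a standard density argument on test $(n-1)$-forms.

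To close with $(6)\Rightarrow(3)$, I dualize $T$ to produce $\omega\in L^\infty(\Lambda^n T^*\XX)$ satisfying (5), via the forms/currents correspondence encoded in Proposition \ref{formsandcurrents} of the paper. On $A_\epsilon(\XX)$, condition (5) reads, in local harmonic charts, as the statement that $\omega$ is a weakly harmonic top form; elliptic regularity gives a continuous representative, and unique continuation together with $T\ne 0$ forces $\omega$ to be nowhere vanishing. Normalization yields a continuous orientation of $A_\epsilon(\XX)$, closing the cycle. Uniqueness follows immediately: two candidate forms induce continuous orientations of the connected manifold $A_\epsilon(\XX)$, hence coincide up to sign on each component, and constancy of the norms together with (ii) upgrades this to global proportionality; the corresponding statement for $T$ follows from the forms/currents correspondence. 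Property (i) is a byproduct of (ii) applied to $\eta=\omega$, which forces $\nabla|\omega|^2\equiv 0$, while (ii) itself is verified in local charts where $\omega$ is literally parallel. I expect the hardest step to be the closing implication $(6)\Rightarrow(3)$: recovering a continuous, nowhere-vanishing section of the top line bundle from a bounded weak object requires elliptic regularity and unique continuation for top harmonic forms on the singular space, transported through the chart machinery of the effective manifold part; a secondary delicate point is the rigorous justification of Stokes-type identities in the $\RCD$ framework with only the regularity available for test forms.
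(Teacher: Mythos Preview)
Your logical skeleton for the topological equivalences $(1)\Leftrightarrow(2)\Leftrightarrow(3)$ is the same as the paper's, but the analytic cycle has real gaps.

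\textbf{The step $(3)\Rightarrow(4)$ is where almost all the work lies, and you skip it.} Saying ``let $\omega$ be the unit top form compatible with the orientation'' and ``the Sobolev bound is inherited from $\eta$'' is not a proof in this setting. On a regular ball the only concrete candidate is $\omega_B=\nu/|\nu|$ with $\nu=du_1\wedge\cdots\wedge du_n$ built from an $\epsilon$-splitting map $u$, and $|\nu|$ can vanish on a null set and degenerate polynomially near singular points. To get $\omega_B\cdot\eta\in W^{1,2}$ the paper proves a quantitative estimate $|\nu|^{-1}\in L^r\cap W^{1,q}$ for all $r<\infty$ and $q<2$ (Proposition~\ref{integrability}), via an iterated covering argument based on the transformation theorem for splitting maps; this bound is sharp and is the technical heart of the construction. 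Compatibility of the local forms on overlaps (so that the global $\omega_X$ is well defined and the sign is determined by the topological orientation) is a separate degree computation (Proposition~\ref{brgvfsd}). Finally, extending the Sobolev property across the singular set uses a capacity-type cutoff argument (Lemma~\ref{sobolevext}). None of this is automatic.

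\textbf{Your $(4)\Rightarrow(5)$ is circular.} You invoke the Leibniz identity $\nabla(\omega\cdot\eta)=\omega\cdot\nabla\eta$, ``verified chartwise since $\omega$ is locally parallel''. But (4) only gives $|\omega|$ constant and $\omega\cdot\eta\in W^{1,2}$; it does not say what $\nabla(\omega\cdot\eta)$ equals, and ``locally parallel'' is exactly conclusion (ii), which the paper obtains only \emph{a posteriori} via uniqueness. The paper does not prove $(4)\Rightarrow(5)$ directly at all: it shows $(2)\Rightarrow(4)$ and $(2)\Rightarrow(5)$ for the explicitly constructed $\omega_X$, then separately $(4)\Rightarrow(3)$ and $(5)\Rightarrow(3)$.

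\textbf{Your $(6)\Rightarrow(3)$ invokes tools that are not available.} ``Elliptic regularity gives a continuous representative, and unique continuation forces $\omega$ nowhere vanishing'' is not something one can quote on an $\RCD$ space: there is no elliptic regularity or unique continuation theory for top forms here. The paper's argument (Proposition~\ref{bgfvsdca}, case (2)) is instead purely measure-theoretic: on each regular ball one writes $\omega=f\,\omega_B$ with $f\in L^\infty$, tests against $d(\varphi\, e_2^k\wedge\cdots\wedge e_n^k)$ where $(e_i)$ is a Gram--Schmidt frame built from the splitting map with controlled $L^r\cap W^{1,q}$ coefficients, integrates by parts, and concludes via Lemma~\ref{divconst} that $f$ is locally constant. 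This simultaneously handles the rigidity and the nowhere-vanishing, without any continuity of $\omega$.

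A minor point: in $(2)\Rightarrow(1)$ your claim that ``a set of Hausdorff dimension $\le n-2$ cannot locally disconnect a topological $n$-manifold'' needs care, since the charts are only bi-H\"older and do not control Hausdorff dimension of the image. The paper instead perturbs an orientation-reversing loop into $A$ using the geodesic non-branching argument of Lemma~\ref{moltoconnesso}.
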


We remark that, to obtain the equivalence of items (1)--(6), the assumption in item (4) can be slightly weakened to $|\omega| \ge c > 0$ in place of $|\omega|$ constant. This is discussed further in Remark \ref{remmodulo}.

\subsection{Preliminaries from Algebraic Topology}

We recall some basic notions from algebraic topology (see e.g.\ \cite[Section 3.3]{Hatcher}).
Given a topological manifold $M^n$ with $n\ge1$, an \emph{orientation} is a continuous choice of
a generator $\mu_x\in H_n(M,M\setminus\{x\})$.
Here continuity means that, for each $x$, we can find a compact neighborhood $\bar U$ homeomorphic to the closed Euclidean ball
and such that, for some generator $\mu\in H_n(M,M\setminus U)$,
we have $i_y(\mu_y)=\mu$ for all $y\in U$, where $i_y:H_n(M,M\setminus\{y\})\to H_n(M,M\setminus U)$
is the canonical isomorphism given by the deformation retraction of $M\setminus\{y\}$ onto $M\setminus U$.
Note that all these groups are isomorphic to $\mathbb Z$, since we have
$$H_n(M,M\setminus U)\cong H_n(\bar U,\partial U)\cong H_n(\bar B_1(0^n),S^{n-1})\cong\mathbb{Z}$$
by excision (however, the second isomorphism is not canonical).
We say that $M$ is \emph{orientable} if it admits an orientation. It is easy to check
that $M$ is orientable if and only if each connected component is orientable.

Regardless of whether $M$ is orientable or not, given a continuous curve $\gamma:[0,1]\to M$
and $\mu_0\in H_n(M,M\setminus\{\gamma(0)\})$,
there exists a unique continuous path $(\mu_t)_{t\in[0,1]}$ of generators $\mu_t\in H_n(M,M\setminus\{\gamma(t)\})$,
where continuity is understood as above. If $\gamma$ is a loop (i.e.\ $\gamma(0)=\gamma(1)$),
we say that $\gamma$ \emph{preserves} the orientation if $\mu_1=\mu_0$, and we say that it \emph{reverses} the orientation if instead $\mu_1=-\mu_0$; clearly, this is independent of the choice of $\mu_0$.
It is easy to check that this property of a loop is invariant under homotopies (even those moving the basepoint)
and that $M$ is orientable if and only if there is no loop reversing the orientation.

\subsection{Proof of Proposition \ref{thm:char1}}

We begin by proving Proposition \ref{thm:char1}, which establishes the equivalence $(1)\Leftrightarrow (2)$ in Theorem \ref{vefdacscdcsd}. The proof is relatively straightforward and relies on purely topological arguments, together with the fact that closed sets with vanishing $(n-1)$-Hausdorff dimension do not disconnect the space. The latter result was shown in \cite[Theorem 3.7]{Cheeger-Colding97II} for Ricci limit spaces. Following a similar argument,
it was adapted to the setting of $\RCD$ spaces in \cite[Proposition A.6]{KapMon19}.
We briefly reproduce the proof in our setting for the reader's convenience.

\begin{lem}\label{moltoconnesso}
    Let $(\XX,d,\HH^n)$ be a non-collapsed $\RCD(-(n-1),n)$ space with no boundary. Let $C\subseteq\XX$ be closed with $\HH^{n-1}(C)=0$. Then, for every $x\in \XX\setminus C$, the following holds. For $\HH^n$-a.e.\ $y\in \XX\setminus C$, there exists a geodesic $\gamma:[0,1]\rightarrow\XX$ connecting $x$ to $y$ with $\gamma([0,1])\subseteq\XX\setminus C$.
\end{lem}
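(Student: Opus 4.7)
The plan is to reduce to showing that the ``bad set''
\[
E \defeq \{y \in \XX \setminus C : \text{no geodesic from $x$ to $y$ lies entirely in } \XX \setminus C\}
\]
has $\HH^n$-measure zero. Since $C$ is closed and $x \notin C$, I can choose $r_0>0$ with $\bar B_{r_0}(x)\cap C=\emptyset$, so $B_{r_0}(x)\subseteq E^c$ trivially, and by exhausting $\XX$ with balls it suffices to fix $R>r_0$ and prove $\HH^n(E\cap B_R(x))=0$.

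Arguing by contradiction, suppose this measure is positive, and pick a bounded Borel $E_0\subseteq E\cap B_R(x)$ with $0<\HH^n(E_0)<\infty$. I would exploit the optimal transport machinery available in essentially non-branching non-collapsed $\RCD$ spaces: the $W_2$-optimal transport from $\delta_x$ to the normalized measure $\mu\defeq \HH^n\mres E_0/\HH^n(E_0)$ is induced by a unique optimal dynamical plan $\pi\in\mathcal{P}(\mathrm{Geo}(\XX))$ concentrated on geodesics from $x$ to points of $E_0$, and Gigli--Rajala-type estimates guarantee that each intermediate measure $\mu_t\defeq (e_t)_*\pi$, $t\in(0,1)$, is absolutely continuous with respect to $\HH^n$. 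Since $\HH^{n-1}(C)=0$ a fortiori gives $\HH^n(C)=0$, we obtain $\mu_t(C)=0$ for every $t\in(0,1)$, so Fubini forces the hitting set $T(\gamma)\defeq\{t\in[0,1]:\gamma(t)\in C\}$ to be Lebesgue-null for $\pi$-a.e.\ $\gamma$. On the other hand, by definition of $E_0$ every $\gamma\in\supp\pi$ must meet $C$, so $T(\gamma)$ is a nonempty closed subset of $[r_0/R,1]$ of Lebesgue measure zero, which by itself is not yet a contradiction.

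The main obstacle, and the step that uses the full strength of $\HH^{n-1}(C)=0$ rather than only $\HH^n(C)=0$, is to upgrade ``$T(\gamma)$ Lebesgue-null'' to ``$T(\gamma)=\emptyset$'' for $\pi$-a.e.\ $\gamma$. The natural strategy is to study the first-hit map $F(\gamma)\defeq\gamma(\min T(\gamma))$ and prove $F_*\pi\ll\HH^{n-1}\mres C$; combined with $\HH^{n-1}(C)=0$ this forces $F_*\pi\equiv 0$, contradicting the fact that $\pi$ is a probability measure on geodesics all of which meet $C$. Establishing this absolute continuity is a coarea/slicing statement: essential non-branching identifies each $\gamma\in\supp\pi$ with its crossing point on a fixed sphere $\partial B_r(x)$ for some small $r<r_0$, and the Jacobian of the polar ``exponential-like'' map on a non-collapsed $\RCD$ space controls the $(n-1)$-dimensional push-forward at the first-hit time, with non-collapsing crucial to bound this Jacobian from below. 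This is essentially the argument of \cite[Theorem 3.7]{Cheeger-Colding97II} for Ricci limits and its $\RCD$ adaptation in \cite[Proposition A.6]{KapMon19}, which I would follow.
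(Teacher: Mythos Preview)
Your setup is sound and you correctly isolate the crux: Fubini from $(e_t)_*\pi\ll\HH^n$ only gives that $T(\gamma)$ is Lebesgue-null for $\pi$-a.e.\ $\gamma$, which does not preclude $T(\gamma)\neq\emptyset$. But the mechanism you propose to close this gap is not the one in \cite{Cheeger-Colding97II} or \cite{KapMon19}, and as stated it does not work.

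The claim $F_*\pi\ll\HH^{n-1}\mres C$ for the first-hit map $F(\gamma)=\gamma(\min T(\gamma))$ is not justified. Even granting that essential non-branching lets you parametrize geodesics from $x$ by a point on $\partial B_r(x)$, the first-hit time $\min T(\gamma)$ is merely measurable in $\gamma$ and can jump discontinuously, so $F$ is not Lipschitz and no coarea or Jacobian bound applies to it. What would work in $\RR^n$ is the \emph{radial projection} of $C$ onto a fixed sphere (which is Lipschitz away from $x$), but that is a different map, and in the $\RCD$ setting no such Lipschitz radial projection is available off the shelf.

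The argument actually used in \cite[Theorem 3.7]{Cheeger-Colding97II}, \cite[Proposition A.6]{KapMon19}, and reproduced in the paper, avoids first-hit maps entirely. One transports $\delta_x$ to the normalized $\HH^n$ on a small ball $B_\eta(y)$ and uses not just absolute continuity but the \emph{uniform density bound} $(e_t)_*\pi\le D\,\HH^n$ for $t\ge\eta/2$. The key step is a time-discretization: if a geodesic of speed $\approx 1$ enters a ball $B=B_{1/N}(z)$ with $2B\cap B_\eta(x)=\emptyset$, then for some $t_i=i/N\ge\eta/2$ one has $\sigma(t_i)\in 2B$, whence
\[
\pi(\{\sigma:\sigma\text{ meets }B\})\le\sum_{i:\,t_i\ge\eta/2}(e_{t_i})_*\pi(2B)\le DN\cdot\HH^n(2B)\le D'(1/N)^{n-1}.
\]
Covering $C\cap\bar B_2(x)$ by finitely many balls $B_{r_j}(x_j)$ with $\sum_j r_j^{n-1}<\epsilon$ then yields $\pi(\{\sigma:\sigma\text{ meets }C\})\le D'\epsilon$, and $\epsilon\to 0$ gives the conclusion. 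The hypothesis $\HH^{n-1}(C)=0$ enters precisely through the exponent $n-1$ produced by the discretization (one factor of $N$ from the number of times, $N^{-n}$ from the volume), not through any $(n-1)$-dimensional pushforward onto $C$.
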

\begin{proof}
    In this proof, all geodesics have to be understood to be parametrized by constant speed (close to $1$).
    Take $x,y\in\XX\setminus C$. We are going to prove that, for some $\eta>0$, there exists a set $A\subseteq B_\eta(y)$ with $\HH^n(B_\eta(y)\setminus A)=0$ such that, for every $y'\in A$, there exists a geodesic joining $x$ to $y'$ whose image is contained in $\XX\setminus C$. This will clearly be enough to conclude. For simplicity, we rescale the metric so that $d(x,y)=1$.
    
    Let $\eta\in (0,1/10)$ be such that $B_{3\eta}(x)\cup B_{3\eta}(y)\subseteq\XX\setminus C$. 
	Let $\mu_0\defeq\delta_{x}$ and let $\mu_1\defeq\frac{1}{\HH^n(B_\eta( y))}\HH^n\mres B_\eta( y)$.  
In this proof, we will denote by $D$ a constant that depends only upon the space and $x,y,\eta$ and may vary from line to line.
 By Bishop--Gromov, we see that 
	\begin{equation}
		\frac{r^n}{D}\le  \HH^n(B_r( z))\le D r^n\quad\text{for every $z\in B_{3}(x)$ and $r\in (0,3)$}.
	\end{equation}
	Now we are going to use \cite[Theorem 1.4]{Rajala12-2} (and its proof), with the same notation. Let us consider an optimal geodesic plan $\pi\in\mathrm{OptGeod}(\mu_0,\mu_1)$. Then $(e_t)_*\pi\le C(t)\HH^n$ for every $t\in (0,1]$, with $C(t)$ locally bounded in $(0,1]$. In particular, $\sup_{t\ge \eta/2} C(t)\le D$.
 %In particular,
	%\begin{equation}\label{bgsbdfvsdv}
	%	C(t)\le C\qquad\text{for every }t\in [1/(4D),1].
	%\end{equation}
 
	Now take a ball $B\defeq B_{1/N}(z)\subseteq B_{3}(x)$ with $2B \cap B_{\eta}(x)=\emptyset$, for some $N\in\NN$ large, and consider the times $(t_i)_{i=0,\dots,N}$ with $t_i\defeq i/N$. Let $\sigma$ be any geodesic from $x$ to a point in $B_\eta(y)$ (note that $\pi$ is concentrated on these geodesics), and assume that $\sigma$  intersects $B$. Then $\sigma(t_i)\in 2B$ for some $i=0,\dots,N$, hence $t_i\ge \eta/2$. Therefore, denoting $\sigma_t\defeq \sigma(t)$, we have % Therefore, recalling \eqref{bgsbdfvsdv},
	\begin{equation}
		\begin{split}
			\pi(\{\sigma\,:\, \sigma_t\in B\text{ for some $t\in [0,1]$}\})&\le \sum_{i\,:\,t_i\ge \eta/2} \pi(\{\sigma\,:\, \sigma_{t_i}\in 2B\}) \\
			&\le \sum_{i\,:\,t_i\ge \eta/2} (e_{t_i})_{*}\pi (2B)\le  DN\frac{1}{N^n}\le D (1/N)^{n-1}.
		\end{split}
	\end{equation}
 Now, notice that if $\sigma$ is a geodesic joining $x$ to a point in $B_\eta(y)$ intersecting $C$, then it must intersect $C\cap B_{2}(x)$. Hence, for any $\epsilon\in (0,1)$, we  cover $C\cap \bar B_{2}(x)$ with finitely many balls $(B_j)_{j\in J}$, where we can assume also that for each $B_j=B_{r_j}(x_j)$ we have $B_j\subseteq B_{3}(x)$, $2B_j\cap B_{\eta}( x)=\emptyset$, and
	\begin{equation}
		\sum_j r_j^{n-1}\le \epsilon.
	\end{equation}
	Also, there is no loss of generality in assuming that each $r_j$ is the reciprocal of a natural number.
	By what we have shown above,
	\begin{equation}
		\begin{split}        
			\pi(\{\sigma\,:\, \sigma_t\in C\text{ for some $t\in [0,1]$}\})&\le \pi(\{\sigma\,:\, \sigma_t\in B_j\text{ for some $t\in [0,1]$ and $j\in J$}\})\\&\le \sum_j \pi(\{\sigma\,:\, \sigma_t\in B_j\text{ for some $t\in [0,1]$}\})\\
			&\le \sum_j D r_j^{n-1}\le D\epsilon.
		\end{split}
	\end{equation}
 Since $\epsilon\in (0,1)$ is arbitrary and $D$ is independent of $\epsilon$, we see that 
 \begin{equation}
     \pi(\{\sigma\,:\, \sigma_t\in C\text{ for some $t\in [0,1]$}\})=0,
 \end{equation}
 which shows that for $\HH^n$-a.e.\ $y'\in B_\eta(y)$ there exists a geodesic joining $x$ to $y'$ whose image does not intersect $C$. 
\end{proof}

\begin{proof}[Proof of Proposition \ref{thm:char1}]
    Assume that $A,A'\subseteq\XX$ are open sets and topological manifolds,
    and assume that $A$ is orientable and satisfies $\HH^{n-1}(\XX\setminus A)=0$.
    We wish to show that $A'$ is orientable as well. By possibly replacing $A'$ with $A\cup A'$,
    we can assume that $A\subseteq A'$.

    %\footnote{EB: Qui bisogna scegliere quale dimostrazione fare. Up to you! CB: per me possiamo lasciare com'è che è già scritto bene, comunque, qua la versione alternativa: forse possiamo cavarcela prima facendo così: prendo un generatore $x\mapsto \mu_x$ per $A$ orientabile e per $p\in A'\setminus A$, prendo $r$ piccolo per cui $B_r(p)\subseteq A'$ è orientabile, con generatore $x\mapsto\nu_x$. Quindi, su $B_r(p)\cap A$, $\mu=f\nu$, con $f:B_r(p)\rightarrow\{\pm 1\}$ localmente costante. Ma $B_r(p)\cap A$ è connesso per Lemma \ref{moltoconnesso}, quindi $f$ è costante e $A\cup B_r(p)$ è orientabile. Ora si può o concludere per compattezza o notando che limiti crescenti di varietà orientabili sono orientabili. EB: Mi semnra moooolto meglio. }
    
    %\medskip

    Both $A$ and $A'$ are pathwise-connected by Lemma \ref{moltoconnesso}.
    Assuming by contradiction that $A'$ is not orientable, let $\gamma:[0,1]\to A'$ be a loop which reverses the orientation, with $\gamma(0)=\gamma(1)\in A$,
    and let $I\subseteq(0,1)$ be a closed interval such that $\gamma([0,1]\setminus \accentset{\circ}{I})\subseteq A$.
    Since $A$ is obviously dense in $A'$, by compactness of the image of $\gamma$, we can find $\delta>0$ and $x_1,\dots,x_N\in A$
    such that $B_{10\delta}(x_i)$ is included in a topological ball $B_i\cong B_1(0^n)$, itself included in $A'$,
    and such that we can write
    $$I=[t_0,t_1]\cup\dots\cup[t_{N-1},t_N]$$
    with $\gamma([t_{i-1},t_i])\subseteq B_\delta(x_i)$.
    Clearly, we can assume that $x_N=\gamma(t_N)$, and we let $x_0\defeq\gamma(t_0)$.
    
    Thanks to Lemma \ref{moltoconnesso} again, since $d(x_{i-1},x_i)<4\delta$
    we can join the two points $x_{i-1}$ and $x_i$ with a curve $\tilde\gamma_i$ taking values in $A$
    and of length at most $4\delta$. Moreover, for each $i=0,\dots,N$ we select a geodesic $\eta_i$ from $\gamma(t_i)$ to
    $x_i$ (constant for $i=0,N$). We now take the concatenation
    $$\tilde\gamma\defeq \gamma|_{[0,t_0]}*\tilde\gamma_1*\dots*\tilde\gamma_N*\gamma|_{[t_N,1]},$$
    which is a loop in $A$. To conclude, we claim that $\tilde\gamma$ is homotopic to $\gamma$ in $A'$.
    This will yield a contradiction since then $\tilde\gamma$ must reverse the orientation, as well.

    To prove the claim, we just observe that each curve $\eta_i$ has length at most $\delta$,
    and hence the concatenation $\eta_{i-1}*\tilde\gamma_i*\eta_i^{-1}$ has length at most $6\delta$,
    and thus takes values in $B_{10\delta}(x_i)\subseteq B_i$. Since $B_i$ is a contractible subset of $A'$
    and includes the images of both $\eta_{i-1}*\tilde\gamma_i*\eta_i^{-1}$ and $\gamma|_{[t_{i-1},t_i]}$,
    we conclude that these two curves are homotopic in $A'$ (note that they have the same endpoints), and the conclusion follows.
    %Let $A'\subseteq\XX$ open be a connected topological manifold. We want to show that $A'$ is orientable. Clearly, there is no loss of generality in assuming that $A'\supseteq A_\epsilon(\XX)$, for $\epsilon<\epsilon(n)$. Set $C\defeq\XX\setminus A$ closed, notice that $\HH^n(C)=0$ and that $A'\setminus C\subseteq A$ is orientable. {\color{red} CB: da completare, usando il fatto che $C$ non sconnette, Lemma \ref{moltoconnesso}.}
 \end{proof}

\subsection{Proof of Theorem \ref{vefdacscdcsd}}

In this part, we address the remaining implications in the proof of Theorem \ref{vefdacscdcsd}. We will make use of the technical material on volume forms and currents, which is developed in Sections \ref{sectionforms} and \ref{sectioncurrents}, respectively.

%\bigskip

We start by addressing the equivalence among (1), (2), and (3).
Recall first that, for $\epsilon<\epsilon(n)$, $A_\epsilon(\XX)$  is open and is a connected topological manifold. Hence,  ${(1)}\Rightarrow{(3)}\Rightarrow{(2)}$.
Proposition \ref{thm:char1} proves ${(1)}\Leftrightarrow{(2)}$.

To deal with items (4), (5) and establish the equivalence with the previous ones, we rely on the results of Section \ref{sectionforms}. 
Theorem \ref{brfvaed} proves $(2)\Rightarrow (4),(5)$ for a volume form $\omega$ satisfying (i) and (ii). Notice that we could have equivalently proved the slightly weaker implications $(3)\Rightarrow (4),(5)$; however, the proof would not have been significantly simpler. 
Theorem \ref{vfedacadc} proves $(4)\Rightarrow (3)$ and $(5)\Rightarrow (3)$. 
Theorem \ref{uniqueness} proves the uniqueness of the orientation form. Notice that, as a consequence of this and the proof of Theorem \ref{brfvaed},  any form satisfying either $(4)$ or $(5)$ satisfies  also (i) and (ii). %Indeed, if we have any such $\omega'$, then, by what remarked above, the space is orientable and thus we have $\omega$ satisfying $(4),(5)$ and $\rm i),ii)$. By Theorem \ref{uniqueness}, $\omega'=c\omega$, for $c\in\RR\setminus\{0\}$, so that also $\omega'$ satisfies $\rm i),ii)$.

Finally, to deal with item (6) and prove the equivalence with (4) and (5), we exploit the results of Section \ref{sectioncurrents}.
Theorem \ref{maincurr} proves $(5)\Rightarrow (6)$ (notice that, by Proposition \ref{formsandcurrents}, $|T_\omega|\in L^\infty$ and $T_\omega\ne 0$), and Theorem \ref{bgrvfdcassc} proves  $(6)\Rightarrow (5)$ (notice that, by Proposition \ref{formsandcurrents}, $\omega_T\ne 0$). By the uniqueness of the orientation form, it is straightforward to deduce the uniqueness of the orientation current, as the map $T\mapsto \omega_T$ of Proposition \ref{formsandcurrents} is injective.\qed

\section{Ramified Double Cover}

In this section, we examine the ramified double cover within the context of non-collapsed $\RCD$ spaces without boundary. As outlined in Sections \ref{intro:cover} and \ref{intro:Open}, we establish the existence, uniqueness, and defining properties of the ramified double cover. However, $\RCD$ regularity remains open, except in the smoothable case.

\begin{thm}[Ramified Double Cover for $\RCD$ Spaces]\label{defndouble}
	Let $(\XX,d,\HH^n)$ be a non-orientable, non-collapsed $\RCD(-(n-1),n)$ space without boundary. Then there exists a geodesic metric measure space $(\widehat \XX,\widehat d,  \HH^n)$ along with an involutive isometry $\Gamma:\widehat \XX\rightarrow\widehat \XX$ such that the following hold.
	\begin{itemize}
		\item[(i)] $\XX = \widehat \XX /\langle \Gamma \rangle$, and we denote by $\pi:\widehat\XX\rightarrow\XX$ the projection map.

		\item[(ii)] There exists $\widehat A \subseteq \widehat \XX$ open dense, which is an orientable topological manifold and length space.
		
		\item[(iii)] There is an open connected topological manifold $\mathcal{R} \subseteq A \subseteq \XX$, such that  $\pi:\widehat A = \pi^{-1}(A)  \rightarrow A$ is a local isometry and forms a double cover, where $\mathcal{R}$
        is the set of points where the tangent cone is Euclidean.
	\end{itemize}

	The pair $((\widehat \XX,\widehat d,  \HH^n),\pi)$ is unique up to isomorphism. Specifically, if $((\widehat \XX',\widehat d',  \HH^n),\pi')$ is another such pair, then there exists an isometry $\Phi: (\widehat \XX,\widehat d)\rightarrow(\widehat \XX',\widehat d')$ satisfying $\pi'\circ \Phi = \pi$.
\end{thm}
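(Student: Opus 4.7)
Following the outline of Section \ref{intro:cover}, fix $\epsilon<\epsilon(n)$ small and set $A\defeq A_\epsilon(\XX)$. By the $\epsilon$-regularity theorem, $A$ is an open connected topological manifold containing the Euclidean regular set $\mathcal R$, with $\HH^{n-1}(\XX\setminus A)=0$. By Proposition \ref{thm:char1}, the non-orientability of $\XX$ implies that $A$ is non-orientable, so there exists a standard topological orientation double cover $\pi_A\colon\widehat A_0\to A$ with $\widehat A_0$ connected and orientable, and deck involution $\Gamma_A$ swapping the two sheets.

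Transport the length structure of $A$ to $\widehat A_0$ via $\pi_A$: for $\hat x,\hat y\in\widehat A_0$, let $\widehat d_A(\hat x,\hat y)$ be the infimum of lengths $\ell(\pi_A\circ\widehat\gamma)$ over continuous curves $\widehat\gamma$ joining $\hat x$ to $\hat y$ in $\widehat A_0$. Since $\pi_A$ is a local homeomorphism and $A$ is locally geodesic (being open in $\XX$), $\widehat d_A$ is a length metric for which $\pi_A$ is a local isometry, and $\Gamma_A$ is automatically an isometry. Let $(\widehat\XX,\widehat d)$ be the metric completion of $(\widehat A_0,\widehat d_A)$, extend $\Gamma_A$ to an involutive isometry $\Gamma$ by continuity, and — using $\widehat d_A\ge d\circ(\pi_A\times\pi_A)$ so that Cauchy sequences in $\widehat A_0$ project to Cauchy sequences in $\XX$ — extend $\pi_A$ to a continuous surjection $\pi\colon\widehat\XX\to\XX$ with $\pi\circ\Gamma=\pi$. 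Equip $\widehat\XX$ with the Hausdorff measure $\HH^n$. Completeness combined with local doubling inherited from $\XX$ via the local isometry $\pi$ makes $(\widehat\XX,\widehat d)$ a geodesic space, by Hopf--Rinow. Identifying $\widehat A\defeq\pi^{-1}(A)$ with the canonical image of $\widehat A_0$ in the completion, $\pi|_{\widehat A}\colon\widehat A\to A$ is the original topological double cover and a local isometry, which gives (ii)--(iii); since $\HH^{n-1}(\XX\setminus A)=0$, $\pi$ is locally measure two-to-one and induces the identification $\XX\cong\widehat\XX/\langle\Gamma\rangle$ in (i).

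For uniqueness, let $((\widehat\XX',\widehat d',\HH^n),\pi')$ be another such pair, with associated manifold part $A'\subseteq\XX$, and set $V\defeq A\cap A'$. Then $V$ is an open topological manifold with $\HH^{n-1}(\XX\setminus V)=0$, hence connected and non-orientable by Lemma \ref{moltoconnesso} and Proposition \ref{thm:char1}. Both $\pi^{-1}(V)\subseteq\widehat\XX$ and $(\pi')^{-1}(V)\subseteq\widehat\XX'$ are connected orientable topological double covers of $V$, so by the topological uniqueness of the orientation double cover there is a homeomorphism $\Phi_0$ between them with $\pi'\circ\Phi_0=\pi|_{\pi^{-1}(V)}$. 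Since both projections are local isometries over $V$, $\Phi_0$ is itself a length-preserving isometry. As $\pi^{-1}(V)$ and $(\pi')^{-1}(V)$ are dense in $\widehat\XX$ and $\widehat\XX'$ respectively (again via Lemma \ref{moltoconnesso} applied to their images in $\XX$, together with (ii)), $\Phi_0$ extends by continuity to the desired isometry $\Phi\colon\widehat\XX\to\widehat\XX'$ with $\pi'\circ\Phi=\pi$.

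The main obstacle lies at the ramification locus. For each $x\in\XX\setminus A$, the fiber $\pi^{-1}(x)$ should consist of one point (if every sufficiently small loop in $A$ around $x$ reverses orientation) or two (otherwise), and verifying that the metric completion of $(\widehat A_0,\widehat d_A)$ encodes this dichotomy faithfully requires a careful analysis of how distinct Cauchy sequences in $\widehat A_0$ can accumulate above the same point of $\XX\setminus A$. It is exactly here that the synthetic connectedness property of Lemma \ref{moltoconnesso} — that $\HH^{n-1}$-negligible sets cannot disconnect $\XX$ — plays the decisive role, both in identifying the quotient in (i) and in the density arguments underpinning the uniqueness step.
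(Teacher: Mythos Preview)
Your overall strategy matches the paper's: take the orientation double cover of $A=A_\epsilon(\XX)$, lift the length structure, complete, and for uniqueness compare over $A\cap A'$. However, the technical core is missing. The paper's engine is the formula
\[
d(\pi(\hat x),\pi(\hat y))=\min\{\widehat d(\hat x,\hat y),\,\widehat d(\hat x,\Gamma\hat y)\}\quad\text{for all }\hat x,\hat y\in\widehat\XX,
\]
proved by lifting curves in $A$ (dense in $\XX$) through the local isometry $\pi|_{\widehat A}$. This formula is what gives $\pi^{-1}(x)=\{\hat x,\Gamma\hat x\}$ for \emph{every} $x\in\XX$ --- so your ``ramification dichotomy'' in the last paragraph is resolved automatically, not by a separate analysis of Cauchy sequences --- and it is also what yields properness: one lifts a finite $\epsilon$-net in a bounded subset of $\XX$ to a finite net upstairs. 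Your Hopf--Rinow appeal via ``local doubling inherited from $\XX$ via the local isometry $\pi$'' does not work as stated, because $\pi$ is only known to be a local isometry on $\widehat A$; at completion points you have no a priori control on small balls without the min-formula. Likewise, surjectivity of $\pi$ onto $\XX\setminus A$ and the quotient identification in (i) require properness plus the min-formula, not a measure-theoretic observation.

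For uniqueness, the claim ``$\Phi_0$ is itself a length-preserving isometry'' is too fast: a bijective local isometry between open subsets of two length spaces need not preserve the ambient metrics (curves realizing $\widehat d$ may exit $\pi^{-1}(V)$). The paper argues instead that $\Phi_0$ is $1$-Lipschitz --- since $\widehat\XX$ is a length space and $\Phi_0$ preserves lengths of curves lying in the dense set $\pi^{-1}(V)$ --- hence extends to a $1$-Lipschitz map $\widehat\XX\to\widehat\XX'$; running the same argument for $\Phi_0^{-1}$ then shows the extension is an isometry.
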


\begin{rem}\label{vfedavdac}
	From the statement above, one can easily deduce the following properties for the involutive isometry $\Gamma:\widehat \XX\rightarrow\widehat \XX$, whose proof is detailed later on:
	\begin{enumerate}
        \item $\pi$ is surjective and, if $x=\pi(\hat x)$,
        then the fiber $\pi^{-1}(x)=\{\hat x,\Gamma\hat x\}$;
        \item $d(\pi(\hat x),\pi(\hat y)) = \min \{\widehat d(\hat x,\hat y), \widehat d(\Gamma\hat x,\hat y)\}$ for every $\hat x, \hat y \in \widehat \XX$;
        \item the map $\pi$ is $1$-Lipschitz, satisfies $\pi\circ\Gamma = \pi$, and pushes forward the measure as $\pi_* \HH_{\widehat \XX}^n = 2\HH_\XX^n$;
		\item the sets $\{\hat x \,:\, \Gamma \hat x \neq \hat x\}\supseteq\hat A$ are open, dense, and have full measure in $\widehat \XX$, and on each of them $\pi$ is a local isometry;        
        \item $\dim_\HH(\widehat \XX \setminus \widehat A) \le n-2$;
        \item $\mathrm{diam}(\widehat \XX) \le 2\mathrm{diam}(\XX)$.
	\end{enumerate}
\end{rem}

As in the smooth setting, the preimage of orientable balls $B_r(p) \subseteq \XX$ through $\pi: \widehat{\XX} \to \XX$ is disconnected. More precisely, we have the following.

\begin{lem}\label{lemstupid}
	Let $(\XX,d,\HH^n)$ be a non-orientable, non-collapsed $\RCD(-(n-1),n)$ space with no boundary, and let $p\in\XX$. Let $\pi:(\widehat \XX,\widehat d,\HH^n)\rightarrow (\XX,d,\HH^n)$ be the orientable double  cover as in Theorem \ref{defndouble}, and let $\hat p \in \widehat \XX$ be such that $\pi(\hat p)=p$. 
	Then, for any $R>0$, $B_R(p)$ is non-orientable if and only if $\widehat d(\hat p,\Gamma\hat p)<2R$.
\end{lem}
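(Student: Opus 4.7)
The proof hinges on the identity
\[
\pi^{-1}(B_R(p)) = B_R(\hat p) \cup B_R(\Gamma \hat p),
\]
which follows from item (2) of Remark \ref{vfedavdac} together with $\Gamma$ being an isometry, and on the elementary observation that these two open balls are disjoint precisely when $\widehat d(\hat p, \Gamma \hat p) \geq 2R$, by the triangle inequality. Throughout, the plan is to work with the open connected manifold $A \subseteq \XX$ provided by Theorem \ref{defndouble}(iii); since $\pi$ is $1$-Lipschitz and $\dim_{\HH}(\widehat \XX \setminus \widehat A) \le n-2$ by Remark \ref{vfedavdac}(5), we have $\HH^{n-1}(\XX \setminus A) = 0$, so the local version of Proposition \ref{thm:char1} stated in Remark \ref{rmk.local.ori}, applied with $A \cap B_R(p)$, reduces orientability of $B_R(p)$ to that of $A \cap B_R(p)$. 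Moreover, since $\XX$ is non-orientable, $A$ itself is non-orientable (by Proposition \ref{thm:char1}), so the connected double cover $\pi : \widehat A \to A$ between non-orientable $A$ and orientable $\widehat A$ must coincide with the orientation double cover of $A$.

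For the implication $\widehat d(\hat p, \Gamma \hat p) \geq 2R \Rightarrow B_R(p)$ orientable, I would observe that the two balls $B_R(\hat p)$ and $B_R(\Gamma \hat p)$ are then disjoint. For any connected component $U$ of $A \cap B_R(p)$, the preimage $\pi^{-1}(U) \subseteq \widehat A$ inherits a decomposition into two disjoint open pieces, one in each ball. If $\pi^{-1}(U)$ were connected, it would lie entirely in one of the two pieces; but the deck transformation $\Gamma$ must preserve $\pi^{-1}(U)$ while mapping one ball to the other, a contradiction. Hence $\pi^{-1}(U)$ has two components, so $U$ is orientable, and this holds for every component of $A \cap B_R(p)$.

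For the converse, assume $B_R(p)$ is non-orientable, so some component $U$ of $A \cap B_R(p)$ is non-orientable. I would pick an orientation-reversing loop $\gamma : [0,1] \to U$ based at some $q \in U$ and lift it through the double cover to a continuous path $\hat \gamma : [0,1] \to \widehat A$ from a preimage $\hat q$ of $q$ to $\Gamma \hat q$. Since $d(p, q) < R$, one can choose $\hat q$ so that $\widehat d(\hat p, \hat q) = d(p, q) < R$, giving $\hat q \in B_R(\hat p)$ and correspondingly $\Gamma \hat q \in B_R(\Gamma \hat p)$. The image of $\hat \gamma$ lies in $\pi^{-1}(B_R(p)) = B_R(\hat p) \cup B_R(\Gamma \hat p)$, with endpoints in the two distinct balls; continuity then forces the balls to intersect, whence $\widehat d(\hat p, \Gamma \hat p) < 2R$ by the triangle inequality.

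Both directions are topological arguments which, once the identification of $\pi : \widehat A \to A$ as the orientation double cover of $A$ is in place, follow cleanly from the properties of $\pi$ and $\Gamma$ collected in Theorem \ref{defndouble} and Remark \ref{vfedavdac}. The main subtlety is ensuring that the $\HH^{n-1}$-null singular set does not obstruct these topological arguments, which is precisely what the local version of Proposition \ref{thm:char1} provides; beyond this, no serious obstacle is expected.
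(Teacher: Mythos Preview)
Your overall strategy is sound and in several respects cleaner than the paper's: the identity $\pi^{-1}(B_R(p))=B_R(\hat p)\cup B_R(\Gamma\hat p)$ organizes both directions nicely. However, you have a genuine logical gap: your two paragraphs prove the \emph{same} implication twice. The first assumes $\widehat d(\hat p,\Gamma\hat p)\ge 2R$ and deduces orientability of $B_R(p)$; by contraposition this is ``non-orientable $\Rightarrow \widehat d<2R$''. The second paragraph, which you call the converse, also assumes $B_R(p)$ is non-orientable and deduces $\widehat d<2R$. You have not addressed the direction $\widehat d(\hat p,\Gamma\hat p)<2R \Rightarrow B_R(p)$ non-orientable.

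Your disjoint-balls argument does not immediately reverse to cover this: knowing that $B_R(\hat p)$ and $B_R(\Gamma\hat p)$ overlap tells you $\pi^{-1}(B_R(p))$ is connected, but what you need is that $\pi^{-1}(U)$ is connected for some component $U$ of $A\cap B_R(p)$, and passing from the former to the latter requires threading through $\widehat A$, whose connectivity properties inside $\pi^{-1}(B_R(p))$ are not automatic (recall $\widehat\XX$ is not known to be $\RCD$, so you cannot directly invoke Lemma \ref{moltoconnesso} upstairs). The paper handles this direction constructively: pick $\hat q\in\widehat A$ with $\widehat d(\hat q,\hat p)<\eta$, use that $\widehat A$ is a length space (Theorem \ref{defndouble}(ii)) to take a curve $\hat\gamma$ in $\widehat A$ from $\hat q$ to $\Gamma\hat q$ of length at most $\widehat d(\hat p,\Gamma\hat p)+3\eta$, and project to an orientation-reversing loop $\gamma=\pi\circ\hat\gamma$ in $A$ of the same length; since $\gamma$ is based at $q=\pi(\hat q)$ with $d(p,q)<\eta$, its image lies in $B_{\ell(\gamma)/2+\eta}(p)\subseteq B_R(p)$ for $\eta$ small. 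This supplies the missing half.
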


We introduce a shorthand notation for the displacement of the involutive isometry $\Gamma$, as it will be central in the proof of Theorem \ref{defndouble} and in the next sections.

\begin{defn}[Displacement of the Involution]   \label{defndelta}
	Let $(\XX,d,\HH^n)$ be a non-orientable, non-collapsed $\RCD(-(n-1),n)$ space with no boundary. Let $\pi:(\widehat \XX,\widehat d,\HH^n)\rightarrow (\XX,d,\HH^n)$ be a ramified double  cover as in Theorem \ref{defndouble}. Then we define $\Delta: \XX \rightarrow \RR$ as
	\begin{equation}
		\Delta x\defeq \widehat d(\hat x,\Gamma \hat x)\quad\text{with $\pi(\hat x)=x$},
	\end{equation}
	where $\Gamma$ is the involution given by Theorem \ref{defndouble}.
\end{defn}
\begin{rem}
	Notice that $\Delta x$ is well defined and also that it depends only on $(\XX,d,\HH^n)$, rather than the orientable double cover $\pi:(\widehat\XX ,\widehat d ,\HH^n)\rightarrow (\XX,d,\HH^n)$. Indeed, let $\pi:(\widehat\XX ,\widehat d ,\HH^n)\rightarrow (\XX,d,\HH^n)$ and $\pi:(\widehat\XX ',\widehat d ',\HH^n)\rightarrow (\XX,d,\HH^n)$ be two orientable double covers as in Theorem \ref{defndouble}. Let $\Phi:(\widehat\XX ,\widehat d )\rightarrow(\widehat\XX ',\widehat d ')$ be the isometry as in Theorem \ref{defndouble}, with $\pi'\circ\Phi=\pi$. For $x\in\XX$ and $\hat x\in\widehat\XX $ with $\pi(\hat x)=x$, we have that $\pi'(\Phi(\hat x))=\pi'(\Phi(\Gamma\hat x))=x$. Hence, if $\Gamma\hat x\ne \hat x$ then $\Gamma'\Phi(\hat x)=\Phi(\Gamma\hat x)$.
    By density of $\hat A$, we deduce that
    $$\Gamma'\circ\Phi=\Phi\circ\Gamma,$$
    and hence
	\begin{equation}
		\Delta x= \widehat d (\hat x,\Gamma\hat x)= \widehat d '(\Phi(\hat x),\Phi(\Gamma\hat x))=\widehat d '(\Phi(\hat x),\Gamma'\Phi(\hat x))=\Delta' x.
	\end{equation}
	
	Also, we remark that $\Delta$ is $2$-Lipschitz. Indeed, letting $x,y\in\XX$, by Theorem \ref{defndouble} we can take $\hat x,\hat y\in\widehat\XX $ with $\pi(\hat x)=x$ and $\pi(\hat y)=y$, and $\widehat d (\Gamma\hat x,\Gamma\hat y)=\widehat d (\hat x,\hat y)=d(x,y)$. Then the conclusion is due to the triangle inequality.\fr
\end{rem}

\subsection{Proof of Theorem \ref{defndouble} and Remark \ref{vfedavdac}}

We first prove uniqueness. Take $(\widehat\XX ,\widehat d ,  \HH^n), (\widehat\XX ',\widehat d ',  \HH^n)$ with $\pi,\pi'$, as in the statement, so that we also have $A,A'$ and $\widehat A,\widehat A'$. Let $\Gamma,\Gamma'$ be the natural involutions for $\pi:\widehat A\rightarrow A$ and $\pi':\widehat A'\rightarrow A'$. Set $A''\defeq A\cap A'$, which is an open set containing $\mathcal{R}$, hence connected and non-orientable. Notice that both $\pi^{-1}(A'')$ and  $(\pi')^{-1}(A'')$ are orientable double covers of $A''$, hence there exists a homeomorphism $f:\pi^{-1}(A'')\rightarrow (\pi')^{-1}(A'')$  such that $\pi'\circ f=\pi$.  Moreover, the involutions for $\pi:\pi^{-1}(A'') \rightarrow A''$ and $\pi':(\pi')^{-1}(A'')\rightarrow A''$ are obtained by restriction of $\Gamma$ and $\Gamma'$, respectively.

	Now take $\hat x\in A''$. Let $x\defeq\pi(\hat x)$, and let $\hat x'\in \widehat A'$ be such that $\pi'(\hat x')=x$. Let $r\in (0,1)$ be such that $\pi$ restricts to an isometry both on $B_r(\hat x)$ and on $B_r(\Gamma\hat x)$, with image $B_r(x)$, and the same for the second pair. %the coverings $\pi:(B_r(\hat x)\cup B_r(\Gamma\hat x))\cap\widehat A\rightarrow B_r(x)$ and  $\pi':(B_r(\hat x')\cup B_r(\Gamma\hat x'))\cap \widehat A'\rightarrow B_r(x)$ trivialize to isometries.
	Up to decreasing $r$, and possibly exchanging $\hat x'$ with $\Gamma\hat x'$, we can assume that $B_r(x)\subseteq A''$ and $f(B_r(\hat x))=B_r(\hat x')$. Thus, $f$ is a local isometry.

    Since $(\hat \XX,\hat d)$ is a length space, $f$ is 1-Lipschitz
    and, by density of $\pi^{-1}(A'')$ in $\hat\XX$ (which follows for instance from density of $A''$ and the formula relating $\hat d$ and $d$), we can extend it to a 1-Lipschitz map $f:\hat \XX\to\hat\XX'$. Since the same can be done in the reverse direction, $f$ is the desired isometry.
    
	%As $\pi^{-1}(\bar A)$ is dense in $B_r(\hat x)$ with respect to $\widehat d $, and $(\pi')^{-1}(\bar A)$ is dense in $B_r(\hat x')$ with respect to $\widehat d '$, we can  uniquely extend $f$ and obtain $f:(\widehat A,\widehat d )\rightarrow (\widehat A',\widehat d ')$, homeomorphism, which is a local isometry, and still $\pi'\circ f=\pi$.
	%By the length space property, $f:(\widehat A,\widehat d )\rightarrow (\widehat A',\widehat d ')$ is an isometry. This implies, by density, that $(\widehat\XX ,\widehat d ), (\widehat\XX ',\widehat d ')$  are isometric, and hence the conclusion follows. 
	
	Now we provide the construction. Let $A\defeq A_\epsilon(\XX)$ for $\epsilon<\epsilon(n)$ and recall that $A\subseteq\XX$ is open and a non-orientable topological manifold. Hence we can take $\widehat A$ to be the orientable double cover of $A$, which is connected, and has the covering map $\pi:\widehat A\rightarrow A$ and the natural involution $\Gamma$. We endow $\widehat A$ with the unique length metric $\widehat d $ making $\pi$ a local isometry, which is possible because $A$ is locally geodesic. Notice that, by this construction, $\pi$ is $1$-Lipschitz.  By construction, $\Gamma$ is a local isometry, so that  $\widehat d (\Gamma\hat x,\Gamma\hat y)\le \widehat d (\hat x,\hat y)$, for every $\hat x,\hat y\in\widehat A$. Since $\Gamma$ is an involution, it is an isometry. Also, $\Gamma\circ\pi=\pi$ on $\widehat A$.
	
	Finally, we take the metric completion and obtain the space $(\widehat\XX ,\widehat d )$, and we endow it with the Hausdorff measure $\HH^n$.   Notice that we can consider the extensions of $\pi:\widehat A\rightarrow \XX$ and $\Gamma:\widehat A\rightarrow\widehat A$, still denoted by $\pi:\widehat\XX \rightarrow \XX$ and $\Gamma:\widehat\XX \rightarrow\widehat\XX $. We still have $\Gamma\circ\pi=\pi$ and $\Gamma$ remains an involutive isometry (in particular, $\Gamma(\widehat A)=\widehat A$). Now we prove several properties satisfied by this space.
	
	First, notice that 
	\begin{equation}\label{alift}
		\pi(\widehat\XX \setminus \widehat A)\subseteq \XX\setminus A,
	\end{equation} 		 
	as $\pi:\hat A\rightarrow A$ is a local isometry and $(\widehat X,\widehat d )$ is the completion of  $(\widehat A,\widehat d )$.

	Now we prove that
	\begin{equation}\label{lift}
		d(\pi(\hat x),\pi(\hat y))=\min \{d(\hat x,\hat y), d(\hat x,\Gamma\hat y)\}\quad\text{for every $\hat x,\hat y\in\widehat\XX $},
	\end{equation}
    so that in particular $\pi^{-1}(x)=\{\hat x,\Gamma\hat x\}$ if $x=\pi(\hat x)$.
	We can assume $\hat x,\hat y\in\widehat A$. Since $\pi$ is $1$-Lipschitz, $d(\pi(\hat x),\pi(\hat y))$ is bounded by the right-hand side.
	Set then $x\defeq\pi(\hat x), y\defeq \pi(\hat y)$, and take $\epsilon>0$. Take then a curve $\gamma:[0,1]\rightarrow A$ joining $x$ to $y$, with length $l(\gamma)\le d(x,y)+\epsilon$. Exploiting the fact that $\pi$ is a local isometry, we can lift $\gamma$ to a curve $\hat \gamma:[0,1]\rightarrow\widehat A$ joining $\hat x$ with some $\hat z\in\widehat\XX $, such that $\pi\circ\hat \gamma=\gamma$. Hence, $l(\hat \gamma)=l(\gamma)\le d(x,y)+\epsilon$. Since $\pi(\hat z)=y$, we have that either $\hat z=\hat y$ or $\hat z=\Gamma\hat y$ (by  \eqref{alift}).
	Hence the claim, as $\epsilon>0$ was arbitrary.
	
	Now we prove that $(\widehat\XX ,\widehat d )$ is proper (and geodesic), by proving that bounded subsets of $(\widehat\XX ,\widehat d )$ are totally bounded. Take indeed $\widehat B\subseteq\widehat\XX $ bounded and, for $\epsilon\in (0,1)$, take a finite set $(x_i)_{i=1,\dots,N}\subseteq A\cap \pi(\widehat B)$ such that $\bigcup_{i=1,\dots,N} B_{\epsilon}(x_i)\supseteq \pi(\widehat B)$, which is possible as bounded sets in $(\XX,d)$ are precompact. Then we have $\bigcup_{i=1,\dots,N} \bigcup_{\hat x_i\in \pi^{-1}(x_i)} B_{\epsilon}(\hat x_i)\supseteq\widehat B$, by \eqref{lift}.
	
	Finally, we prove that 
	\begin{equation}\label{aalift}
		\pi(\widehat\XX \setminus \widehat A)= \XX\setminus A,
	\end{equation} 		 
	i.e.\ that equality holds in \eqref{alift}.
	Indeed, take $x\in\XX \setminus A$ and $(x_k)_k\subseteq A$, $x_k \rightarrow x$. Take any $\hat x_1$ such that $\pi(\hat x_1)=x_1$ and, for $k\ge 2$, take $\hat x_k$ with $\pi(\hat x_k)=x_k$ and such that $\widehat d (\hat x_k,\hat x_1)=d(x_k,x_1)$, which is possible thanks to \eqref{lift}. Since $(\widehat\XX ,\widehat d )$ is proper, for a non-relabeled subsequence we can find a limit $\hat x_k\rightarrow \hat x$. By continuity, we have that $\pi(\hat x)=x$, which is the conclusion.
	
	Now we prove that 
	\begin{equation}
		\mathrm{diam}(\widehat\XX ,\widehat d )\le 2\mathrm{diam}(\XX,d).
	\end{equation}
	Take any $\hat x,\hat y\in\widehat\XX$, let $\hat m\in\widehat\XX $ be such that $\widehat d (\hat x,\hat m)=\widehat d (\hat m,\hat y)$, and set $m\defeq\pi(\hat m)$. By \eqref{lift}, either $d(x,m)=\widehat d (\hat x,\hat m)$ or $d(x,m)=\widehat d (\hat x,\Gamma\hat m)$. In the former case, $\widehat d (\hat x,\hat y)\le \widehat d (\hat x,\hat m)+\widehat d (\hat m,\hat y)=  2\widehat d (\hat x,\hat m)=2d(x,m)\le 2\mathrm{diam}(X)$. We can conclude in a similar way if $d(m,y)=\widehat d (\hat m,\hat y)$. Hence, the only case left to consider is the one in which $d(x,m)=\widehat d (\hat x,\Gamma\hat m)$ and $d(m,y)=\widehat d (\hat m,\Gamma\hat y)$. However, in this case,
	$\widehat d (\hat x,\hat y)\le \widehat d (\hat x,\Gamma\hat m)+\widehat d (\Gamma\hat m,\hat y)= d(x,m)+d(m,y)\le 2\mathrm{diam}(X)$. %As $\widehat A$ is dense, this proves the claim.

	Finally, we prove that 
	\begin{equation}
		\dim_\HH(\widehat\XX \setminus\widehat A)\le n-2,
	\end{equation}
	which will clearly imply that $	\pi_*\HH_{\widehat\XX }^n= 2\HH_\XX^n$.
	Indeed, take $\alpha\in (0,1)$, recall \eqref{aalift} and the fact that $\HH^{n-2+\alpha}(\XX\setminus A)=0$, as $\XX\setminus A\subseteq\XX\setminus\mathcal{R}$. Write then $\XX\setminus A\subseteq \bigcup_i B_{r_i}(x_i)$, where $\sum_i r_i^{n-2+\alpha}<\epsilon$, for $\epsilon\in (0,1)$. Then $\widehat\XX \setminus\widehat A\subseteq\bigcup_{i} \bigcup_{\widehat x_i\in \pi^{-1}(x_i)} B_{r_i}(\hat x_i)$, by \eqref{lift}, so that the conclusion follows.
	
	Finally, notice that $\widehat A\subseteq\{\widehat X\,:\,\Gamma \hat x\ne \hat x\}$, so that the latter is dense and, as $\Gamma$ is continuous, it is also open. Take $\hat x$ such that $10r\defeq \widehat d (\hat x,\Gamma\hat x)>0$. We show that $B_{r}(\hat x)$ is mapped isometrically onto $B_r(x)$, for $x\defeq \pi (\hat x)$. Take $\hat y,\hat z\in B_r(\hat  x)$, and set $y\defeq \pi(\hat y),z\defeq \pi(\hat z)\in B_r(x)$. Now we have $ d( y, z)\le 2r$, whereas 
	\begin{equation}
		10r=\widehat d (\hat x,\Gamma\hat x)\le \widehat d  (\hat x,\hat y)+ \widehat d (\hat y, \Gamma \hat z)+\widehat d ( \Gamma \hat z, \Gamma\hat x)\le 2r+ \widehat d (\hat y, \Gamma\hat z),
	\end{equation}
	so by \eqref{lift} we must have $d(y,z)=\widehat d (\hat y,\hat z)$. Also, \eqref{lift} proves that $\pi:B_r(\hat x)\rightarrow B_r(x)$ is surjective.
\qed

\subsection{Proof of Lemma \ref{lemstupid}}

	We keep the notation of Theorem \ref{defndouble} and we assume, for simplicity of notation, that $R=1$.
	Assume first that $B_1(p)$ is non-orientable, so that there exists an orientation-reversing loop $\gamma:[0,1]\rightarrow A\cap B_{1}(p)$ based at some $q\in A$, with $d(q,p)<1/10$ (recall Remark \ref{rmk.local.ori}).
	Take $\hat q\in\widehat\XX $ such that $\pi(\hat q)=q$ and $\widehat d (\hat p,\hat q)<1/10$.
	Hence, by lifting $\gamma$ through the local isometry $\pi$, we obtain a curve $\hat\gamma:[0,1]\rightarrow \widehat A$ joining $\hat q$ to $\Gamma\hat q$, with $\hat\gamma([0,1])\subseteq\pi^{-1}( B_{1}(p))$. 
	If $\widehat d (\hat q,\Gamma\hat p)\le 2/10$, then
	\begin{equation}
		\widehat d (\hat p,\Gamma\hat p)\le  \widehat d (\hat p,\hat q)+\widehat d (\hat q,\Gamma\hat p)<3/10<2,
	\end{equation}
	so that we can assume $\widehat d (\hat q,\Gamma\hat p)\ge 2/10$.
	Hence, by considering the continuous function $[0,1]\ni t\mapsto \widehat d (\hat \gamma(t),\hat p)-\widehat d (\hat \gamma(t),\Gamma\hat p)\in\RR$, we see that 
	there exists $\hat m$ in the image of $\hat\gamma$ which satisfies
	\begin{equation}
		\widehat d (\hat m,\hat p)=\widehat d (\hat m,\Gamma\hat p).
	\end{equation}
	Indeed,
	\begin{equation}
		\widehat d (\hat \gamma(0),\hat p)-\widehat d (\hat\gamma(0),\Gamma \hat p)= \widehat d (\hat q,\hat p)-\widehat d (\hat q,\Gamma\hat p) \le 1/10-2/10<0,
	\end{equation}
	and also we have $\widehat d (\hat \gamma(1),\hat p)-\widehat d (\hat\gamma(1),\Gamma \hat p)=\widehat d (\Gamma\hat q,\hat p)-\widehat d (\Gamma\hat q,\Gamma \hat p)=-(\widehat d (\hat q, \hat p)-\widehat d (\hat q,\Gamma\hat p))>0$.
	Since, $m\defeq\pi(\hat m)\in B_{1}(p)$, we have either $\widehat d  (\hat m,\hat p)< 1$ or $\widehat d  (\hat m,\Gamma\hat p)<1$ (hence both).
	Hence, in any case, 
	\begin{equation}%\label{bgrfdvs}
		\widehat d (\hat p,\Gamma\hat p)\le\widehat d (\hat p,\hat m)+\widehat d (\hat m,\Gamma\hat p)<2.
	\end{equation}
	
	Conversely, assume that $\widehat d (\hat p,\Gamma\hat p)<2$ and let $\eta\in (0,1)$. Let $\hat q\in\widehat A\cap B_{\eta}(\hat p)$, so that $ \widehat d (\hat q,\Gamma\hat q)\le \widehat d (\hat p,\Gamma\hat p)+2\eta$. Since $\widehat A$ is a length space, we can find a curve $\hat \gamma:[0,1]\rightarrow\widehat A$ joining $\hat q$ to $\Gamma\hat q$ with $l(\hat \gamma)\le \widehat d (\hat p,\Gamma\hat p)+3\eta$. If we set $\gamma\defeq\pi\circ\hat\gamma$, then $\gamma$ is an orientation-reversing loop based at $q=\pi(\hat q)$ of length $l(\gamma)\le \widehat d (\hat p,\Gamma\hat p)+ 3\eta$. In particular, the image of $\gamma$ is contained in $B_{l(\gamma)/2}(q)\subseteq B_{l(\gamma)/2+\eta}(p)$. If $\eta$ is small enough, the image of $\gamma$ is contained in $B_{1}(p)$, which means that $B_1(p)$ is non-orientable.
\qed

\section{Stability of Orientability}

In this section, we discuss in more detail and prove the stability results outlined in Section \ref{intro:stability}.
For the reader's convenience, we restate below Theorem \ref{vfeadadcsd} regarding the stability of orientability according to Definition \ref{dfn:orientable} under GH-convergence.

\begin{thm}[Stability of Orientability]\label{orientstable}
	Let $(\XX_k,d_k,\HH^n,p_k)\xrightarrow{GH}(\XX,d,\HH^n,p)$ be a sequence of non-collapsed $\RCD(-(n-1),n)$ spaces with no boundary.
	If $(\XX_k,d_k,\HH^n)$ is orientable for every $k$, then  $(\XX,d,\HH^n)$ is orientable. More specifically, if, for some $R>0$, $B_R(p_k)$ is orientable for every $k$, then $B_R(p)$ is orientable.
\end{thm}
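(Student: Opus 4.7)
The plan is to argue by contradiction, combining the topological characterization of orientability from Proposition \ref{thm:char1} (equivalence $(1)\Leftrightarrow(3)$ of Theorem \ref{vefdacscdcsd}) with the $\epsilon$-regularity theorem, in order to lift an orientation-reversing loop from $\XX$ to the approximating spaces $\XX_k$.

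Assume $B_R(p)$ is non-orientable. Fix $\epsilon<\epsilon(n)$ and set $A\defeq A_\epsilon(\XX)\cap B_R(p)$, an open topological manifold whose complement in $B_R(p)$ has Hausdorff dimension at most $n-2$; by Remark \ref{rmk.local.ori}, $A$ is non-orientable, so we may pick a continuous orientation-reversing loop $\gamma\colon[0,1]\to A$. By compactness of $\gamma([0,1])$ and the $\epsilon$-regularity theorem, cover its image by finitely many balls $B_{\rho_i}(x_i)\subset A$, $i=1,\ldots,N$, each homeomorphic to $B_1(0^n)$, and choose a subdivision $0=t_0<\cdots<t_N=1$ with $\gamma([t_{i-1},t_i])\subset B_{\rho_i/2}(x_i)$, so that $y_i\defeq\gamma(t_i)$ lies in $B_{\rho_i/2}(x_i)\cap B_{\rho_{i+1}/2}(x_{i+1})$ (indices cyclic, with $y_0=y_N$).

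Now transfer the picture to $\XX_k$. Using $\epsilon_k$-GH-approximations, choose $x_i^k\to x_i$ and $y_i^k\to y_i$ in $\XX_k$. The defining condition \eqref{eq:Aeps} for $A_\epsilon$ is open under pointed GH-convergence, hence for $k$ large each $x_i^k$ lies in $A_{2\epsilon}(\XX_k)$, each ball $B_{\rho_i}(x_i^k)$ is contained in $A_{2\epsilon}(\XX_k)\cap B_R(p_k)$ and homeomorphic to $B_1(0^n)$, and $y_i^k$ lies in the corresponding two-ball overlap. Inside each $B_{\rho_i}(x_i^k)$, join $y_{i-1}^k$ to $y_i^k$ by an arc; concatenation gives a loop $\gamma_k$ in $A_{2\epsilon}(\XX_k)\cap B_R(p_k)$. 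It remains to show that $\gamma_k$ is orientation-reversing: this contradicts the orientability of $B_R(p_k)$ (via Remark \ref{rmk.local.ori} applied in $\XX_k$) and proves the theorem.

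To carry out the last step, fix orientation classes $\mu_i\in H_n(B_{\rho_i}(x_i),B_{\rho_i}(x_i)\setminus\{x_i\})\cong\mathbb{Z}$ by pulling back the standard generator through the homeomorphism to $B_1(0^n)$. The monodromy of $\gamma$ equals the $\mathbb{Z}/2$-product $\prod_i\varepsilon_i$, where $\varepsilon_i\in\{\pm1\}$ records whether, in $H_n(B_{\rho_i}(x_i)\cup B_{\rho_{i+1}}(x_{i+1}),\,\cdot\setminus\{y_i\})$, the images of $\mu_i$ and $\mu_{i+1}$ agree or differ in sign; by construction, this product is $-1$. Choose analogous $\mu_i^k$ in $\XX_k$ and form $\varepsilon_i^k$. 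The main obstacle is to show $\varepsilon_i^k=\varepsilon_i$ for $k$ large: this is where we must translate orientation data between distinct metric spaces. The expected resolution is to invoke the almost-isometric biLipschitz Reifenberg charts provided by the $\epsilon$-regularity theorem on each $B_{\rho_i}(x_i)$ and $B_{\rho_i}(x_i^k)$, together with the GH-approximation $\Phi_k$, in order to realize both pairs of consecutive balls as small perturbations of the same Euclidean model. Since the transition sign on each two-ball overlap is determined by a purely local homological comparison, its stability under such biLipschitz perturbations gives $\varepsilon_i^k=\varepsilon_i$ for all $i$ once $k$ is large enough, completing the contradiction.
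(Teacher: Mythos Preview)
Your approach is essentially the paper's second proof (the paper gives two). The outline is correct: find an orientation-reversing loop in $A_\epsilon(\XX)\cap B_R(p)$, transfer it to $\XX_k$ via Reifenberg charts and GH-approximations, and show the transferred loop is still orientation-reversing by comparing transition signs on the two-ball overlaps.

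Two remarks. First, the Reifenberg charts available here are only bi-H\"older, not bi-Lipschitz (cf.\ Theorem \ref{prel2}(3)); this does not affect the argument, since homeomorphisms are enough to transport local orientation classes, but you should not invoke ``bi-Lipschitz perturbations''. Second, the last paragraph---showing $\varepsilon_i^k=\varepsilon_i$---is where all the content lies, and you have only indicated the ``expected resolution''. The paper carries this out in detail: it embeds $B_2(p_k)$ and $B_2(p)$ in a common metric space, chooses Reifenberg charts $h_\ell:B_{10\rho}(0^n)\to\XX$ and $h_{k,\ell}:B_{10\rho}(0^n)\to\XX_k$ with $\sup_x\tilde d(h_\ell(x),h_{k,\ell}(x))\to0$, pushes the orientation $\mu^{(\ell)}$ forward via $h_{k,\ell}\circ h_\ell^{-1}$, and then checks that on each overlap the two candidate transition maps $h_{\ell+1}^{-1}\circ h_\ell$ and $h_{k,\ell+1}^{-1}\circ h_{k,\ell}$ are $C^0$-close as self-maps between Euclidean balls, hence induce the same map on $H_n(\bar B_{6\rho}(0^n),\partial B_{6\rho}(0^n))\to H_n(\RR^n,\RR^n\setminus\{0\})$. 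Your sketch points at exactly this, but the quantitative bookkeeping (radii, additive distortion of charts and their inverses) is what makes it an actual proof.

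The paper also offers a shorter first proof that bypasses the sign-tracking entirely: cover $\gamma([0,1])$ by $\epsilon(n)$-regular balls $B_{r_i}(x_i)$, find nearby $2\epsilon(n)$-regular balls $B_{r_i}(x_i^k)\subseteq\XX_k$, and invoke the global Reifenberg gluing theorem (\cite[Appendix I]{Cheeger-Colding96}, \cite[Theorem 4.6]{Kapovitch07}) to obtain directly a homeomorphism from $\bigcup_i B_{r_i}(x_i)$ onto a subset of $\bigcup_i B_{100r_i}(x_i^k)$. Since the former is non-orientable, so is the latter, contradicting the orientability of $B_R(p_k)$.
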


We now state the most general version of the stability of non-orientable spaces with Ricci bounded below. Theorem \ref{thm:stabnonorRicciLimit} is an immediate corollary.

\begin{thm}[Stability of Non-Orientability]\label{nonstability}
Let $(\XX_k,d_k,\HH^n,p_k)\xrightarrow{GH}(\XX,d,\HH^n,p)$ be a sequence of non-collapsed $\RCD(-(n-1),n)$ spaces with no boundary. 
Assume that, for some $R>0$, $B_R(p_k)$ is not orientable, and the ramified double cover $(\widehat \XX_k, \widehat d_k, \HH^n)$ (as in Theorem \ref{defndouble}) is an $\RCD(-(n-1), n)$ space, for every $k$. 
Then $B_R(p)$ is non-orientable. Further, we have $(\widehat \XX_k, \widehat d_k, \HH^n, \hat p_k) \xrightarrow{GH} (\widehat \XX, \widehat d, \HH^n, \hat p)$, where $\pi: (\widehat \XX, \widehat d, \HH^n) \rightarrow (\XX, d, \HH^n)$ is the orientable ramified double cover as in Theorem \ref{defndouble} and $\pi(\widehat p) = p$.
Finally (recall Definition \ref{defndelta}), $\Delta_k\rightarrow \Delta$ locally uniformly.
\end{thm}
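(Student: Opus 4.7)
The plan is to build a pointed GH limit of the ramified double covers $\widehat\XX_k$ and identify it with the ramified double cover of $\XX$ via the uniqueness clause of Theorem \ref{defndouble}. As a first step, for any choice of lifts $\hat p_k\in\pi_k^{-1}(p_k)$, I would extract a subsequence (not relabeled) along which
\begin{equation*}
(\widehat\XX_k,\widehat d_k,\HH^n,\hat p_k)\xrightarrow{GH}(\widehat Y,\widehat d,\HH^n,\hat q)
\end{equation*}
converges to a pointed non-collapsed $\RCD(-(n-1),n)$ space. Gromov precompactness applies because by assumption each $\widehat\XX_k$ is a non-collapsed $\RCD(-(n-1),n)$ space; covering-number bounds transfer through the at-most-two-to-one 1-Lipschitz map $\pi_k$, while volume non-collapsing at $\hat p_k$ follows from surjectivity of $\pi_k\colon B_r(\hat p_k)\to B_r(p_k)$ (Remark \ref{vfedavdac}), which gives $\HH^n(B_r(\hat p_k))\ge\HH^n(B_r(p_k))$. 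The stability of the non-collapsed $\RCD$ condition then yields $\widehat Y$ as claimed.

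Next, I would pass to the limit the isometric involutions $\Gamma_k$ and the 1-Lipschitz projections $\pi_k$. An Arzelà--Ascoli-type argument, adapted to GH-convergent sequences and using the uniform bound $\widehat d_k(\hat p_k,\Gamma_k\hat p_k)<2R$ supplied by Lemma \ref{lemstupid}, produces, along a further subsequence, an involutive isometry $\Gamma\colon\widehat Y\to\widehat Y$ and a 1-Lipschitz surjection $\pi\colon\widehat Y\to\XX$ satisfying $\pi\circ\Gamma=\pi$. Passing to the limit the identity
\begin{equation*}
d_k(\pi_k(\hat x),\pi_k(\hat y))=\min\{\widehat d_k(\hat x,\hat y),\widehat d_k(\Gamma_k\hat x,\hat y)\}
\end{equation*}
of Remark \ref{vfedavdac} exhibits $\XX$ as the metric quotient $\widehat Y/\langle\Gamma\rangle$ with two-element fibers $\{\hat x,\Gamma\hat x\}$. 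Each $\widehat\XX_k$ is moreover orientable: by Theorem \ref{defndouble}(ii) it contains the open dense orientable topological manifold $\widehat A_k$, and by Remark \ref{vfedavdac}(5) the complement has Hausdorff dimension at most $n-2$, so Proposition \ref{thm:char1} applies. Hence, by the stability of orientability (Theorem \ref{orientstable}), $\widehat Y$ is orientable as well.

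At this point, the uniqueness clause of Theorem \ref{defndouble} identifies $(\widehat Y,\pi)$ with the ramified double cover $(\widehat\XX,\pi)$ of $\XX$, once $\XX$ is shown to be non-orientable. Passing $\widehat d_k(\hat p_k,\Gamma_k\hat p_k)<2R$ to the limit gives $\widehat d(\hat p,\Gamma\hat p)\le 2R$, so $\Gamma\hat p\ne\hat p$ and $\XX$ is indeed non-orientable (otherwise a genuine double cover of the orientable manifold part would force $\widehat Y$ to split, contradicting either the GH-connectedness inherited from the $\widehat\XX_k$ or the fact that $\Gamma$ swaps the sheets). The most delicate part of the argument, which I expect to be the main obstacle, is upgrading the inequality $\widehat d(\hat p,\Gamma\hat p)\le 2R$ to a strict one, so that Lemma \ref{lemstupid} yields non-orientability of the open ball $B_R(p)$ and not merely of $B_{R'}(p)$ for $R'>R$. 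The natural approach is to first establish non-orientability of $B_{R+\delta}(p)$ for every $\delta>0$ by the above scheme, and then argue, using the length bound $\Delta(p)\le 2R$ together with a compactness argument for orientation-reversing loops in $\widehat\XX$ (controlled in length, hence in diameter of their images), that a loop of length strictly less than $2R$ exists, based suitably close to $p$.

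Finally, the locally uniform convergence $\Delta_k\to\Delta$ follows once the identification $\widehat Y=\widehat\XX$ is available. Both $\Delta_k$ and $\Delta$ are $2$-Lipschitz (see the remark after Definition \ref{defndelta}); for $x_k\to x$ in $\XX$, selecting compatible lifts $\hat x_k\to\hat x$ in $\widehat\XX$ via the GH-convergence and using the continuity of the limit involution $\Gamma$ gives
\begin{equation*}
\Delta_k(x_k)=\widehat d_k(\hat x_k,\Gamma_k\hat x_k)\longrightarrow\widehat d(\hat x,\Gamma\hat x)=\Delta(x),
\end{equation*}
so the equi-Lipschitz bound upgrades this pointwise convergence to locally uniform convergence by a standard argument.
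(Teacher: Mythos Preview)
Your global plan coincides with the paper's: extract a GH limit of the double covers, pass the involutions and projections to the limit, and invoke the uniqueness clause of Theorem \ref{defndouble}. But the uniqueness clause requires you to verify item (iii) there, in particular that $\pi\colon\pi^{-1}(A)\to A$ is an honest double cover for some manifold part $A\supseteq\mathcal R$; this in turn needs the limit involution $\Gamma$ to be fixed-point-free over $A$, and you never establish that. The paper fills this gap concretely: for $x\in A=A_{\delta/4}(\XX)$ one picks approximants $x_k\to x$, notes that by $\epsilon$-regularity a small ball $B_{r_x}(x_k)$ is bi-H\"older to a Euclidean ball, hence orientable, and then Lemma \ref{lemstupid} forces $\widehat d_k(\hat x_k,\Gamma_k\hat x_k)\ge 2r_x$; passing to the limit gives $\Gamma\hat x\ne\hat x$. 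Without this step the rest of your argument does not get off the ground.

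Your deduction of non-orientability of $\XX$ is also flawed. The implication ``$\widehat d(\hat p,\Gamma\hat p)\le 2R$, so $\Gamma\hat p\ne\hat p$'' is a non-sequitur, and the splitting claim---that a double cover of an orientable manifold must disconnect---is simply false (the degree-two map $S^1\to S^1$ is a counterexample); it holds only for the \emph{orientation} double cover, and you have not identified $\pi^{-1}(A)\to A$ as such. The paper argues differently: once $\Gamma$ is known to be fixed-point-free on $\widehat A$, Lemma \ref{moltoconnesso} (applied in the $\RCD$ limit $\widehat Y$) gives that $\widehat A$ is a connected length space, so one can join $\hat q$ to $\Gamma\hat q$ by a curve $\hat\gamma\subseteq\widehat A$; the projection $\gamma=\pi\circ\hat\gamma$ is a loop in $A$, and one shows it is orientation-reversing by approximating with loops $\gamma_k=\pi_k\circ\hat\gamma_k$ in $\XX_k$ (which are orientation-reversing because their lifts to the orientation double cover $\widehat\XX_k$ are not closed) and transporting this via the Reifenberg homeomorphisms as in Theorem \ref{orientstable}. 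Your concern about the strict inequality $\Delta(p)<2R$ versus $\Delta(p)\le 2R$ is legitimate: passing $\Delta_k(p_k)<2R$ to the limit only yields the weak inequality, hence only non-orientability of $B_{R'}(p)$ for every $R'>R$; the paper's proof, as written, establishes global non-orientability of $\XX$ and the identification $\widehat Y\cong\widehat\XX$ (which suffices for all applications), without visibly sharpening the inequality.
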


% It is not hard to see that, under the assumption of Theorem \ref{nonstability}, $(\widehat \XX_k, \widehat d_k, \HH^n)$ is actually an $\RCD(-(n-1),n)$ space as it is locally isometric to $(\XX, d, \HH^n)$ away from a closed $n-2$-dimensional set. \footnote{EB: più dettagli? CB: questo è collegato al remark sul perché non sappiamo dimostrare che i doppi sono RCD}.  In particular, $(\widehat \XX, \widehat d, \HH^n)$ is $\RCD(-(n-1),n)$.

\subsection{Proof of Theorem \ref{orientstable}}
We argue by contradiction and thus we assume that $B_R(p)$ is non-orientable. To simplify the notation, we assume that $R=1$, although this will play no difference in the proof. Therefore, by Remark \ref{rmk.local.ori} we can find an orientation-reversing loop $\gamma:[0,1]\rightarrow A_\epsilon(\XX)\cap B_1(p)$.

To conclude, we propose two distinct proofs: one shorter, which utilizes the global Reifenberg theorem for regular sets in $\RCD$ spaces, and a second one which is completely elementary.

\begin{proof}[Proof I]
By standard $\epsilon$-regularity, if $\epsilon \le \epsilon(n)$ there exists a finite covering $\gamma([0,1]) \subseteq \bigcup_{i=1}^m B_{r_i}(x_i)$ where $B_{100 r_i}(x_i)$ are $\epsilon(n)$-regular balls, bi-H\"older to $B_1(0^n) \subseteq \RR^n$. For $k$ large enough, we can find  $2\epsilon(n)$-regular balls $B_{100r_i}(x_i^k) \subseteq X_k$ which are $\epsilon(n)$-close to $B_{100r_i}(x_i)$. Hence $\bigcup_{i=1}^m B_{100r_i}(x_i^k)$ is a topological manifold, and by standard gluing (see \cite[Appendix I]{Cheeger-Colding96} and \cite[Theorem 4.6]{Kapovitch07}) a subset of it is homeomorphic to $\bigcup_{i=1}^m B_{r_i}(x_i)$, which is not orientable, a contradiction.
\end{proof}

\begin{proof}[Proof II]
Let us embed isometrically $B_2(p_k)\subseteq \XX_k$ and $B_2(p)\subseteq \XX$
into a common metric space $(\tilde \XX,\tilde d)$
realizing the GH-convergence $B_2(p_k)\to B_2(p)$. Thus, from now on,
we will view $B_2(p_k),B_2(p)\subseteq\tilde\XX$.

By taking points $\{q_{k,j}\mid j=0,\dots,2^k\}\subseteq X_k$ with $q_{k,0}=q_{k,2^k}$ and $\tilde d(q_{k,j},\gamma(2^{-k}j))\le\epsilon_k\to0$,
and joining them with $2^k$ geodesics in $\XX_k$ (parametrized over intervals of length $2^{-k}$),
we can find loops $\gamma_k:[0,1]\to \XX_k$ converging uniformly to $\gamma$.
We claim that $\gamma_k$ is also orientation-reversing for $k$ large enough, which is the desired contradiction.

Since $\gamma$ takes values in $A_\epsilon(\XX)$, by Reifenberg we can find $N\ge1$ and $\rho>0$ such that
$$\gamma\lft(\lft[\frac{\ell-1}{N},\frac{\ell+1}{N}\rgt]\rgt)\subseteq B_\rho\lft(\gamma\lft(\frac{\ell}{N}\rgt)\rgt)$$
for all $\ell=0,\dots,N$ (where intervals are taken in $\mathbb R/\mathbb Z$),
and moreover there exists a map
$$h_\ell:B_{10\rho}(0^n)\to B_{10\rho}\lft(\gamma\lft(\frac{\ell}{N}\rgt)\rgt)\subseteq\XX$$
which is a homeomorphism with its image $h_\ell(B_{10\rho}(0^n))\supseteq B_{9\rho}(\gamma(\frac{\ell}{N}))$; we can assume that $h_N=h_0$. Similarly, for $k$ large enough, we can find
$$h_{k,\ell}:B_{10\rho}(0^n)\to B_{10\rho}\lft(\gamma_k\lft(\frac{\ell}{N}\rgt)\rgt)\subseteq\XX_k$$
which is a homeomorphism with its image $h_{k,\ell}(B_{10\rho}(0^n))\supseteq B_{9\rho}(\gamma_k(\frac{\ell}{N}))$; since $h_{k,\ell}$ can be constructed by perturbing any given GH-approximation
$B_{10\rho}(0^n)\to B_{10\rho}(\gamma_k(\frac{\ell}{N}))$, and since $B_2(p_k)\to B_2(p)$ in the Hausdorff metric for $(\tilde\XX,\tilde d)$, we can also require that
$$\sup_{x\in B_{10\rho}(0^n)}\tilde d(h_\ell(x),h_{k,\ell}(x))\le\delta_k$$
for a vanishing sequence $\delta_k\to0$. We will also assume that
$$h_\ell(0)=\gamma\lft(\frac{\ell}{N}\rgt),\quad h_{k,\ell}(0)=\gamma_k\lft(\frac{\ell}{N}\rgt),$$
and that both $h_\ell,h_{k,\ell}$ and their inverses increase distances
at most by the additive error $\rho$.

We now fix an initial generator $\mu_0\in H_n(\XX,\XX\setminus\{\gamma(0)\})^*\cong\mathbb Z^*=\{\pm1\}$
and let $\mu_t\in H_n(\XX,\XX\setminus\{\gamma(t)\})^*$ be the (unique) continuous extension along the curves $\gamma$ and $\gamma_k$,
as discussed at the beginning of Section \ref{ori.equiv.sec}.
We endow the topological ball $B^{(\ell)}\defeq h_\ell(B_{10\rho}(0^n))$
with the unique orientation $\mu^{(\ell)}$ such that
$$\mu^{(\ell)}_{\gamma(t)}=\mu_t\quad\text{for }t\in\lft[\frac{\ell-1}{N},\frac{\ell+1}{N}\rgt]\cap[0,1].$$
Note that, even if $B^{(0)}=B^{(N)}$, we have opposite orientations $\mu^{(0)}=-\mu^{(N)}$
as $\mu_0=-\mu_1$.

Next, we endow the topological ball $B^{(k,\ell)}\defeq h_{k,\ell}(B_{10\rho}(0^n))$
with the image orientation
$$\mu^{(k,\ell)}\defeq (h_{k,\ell}\circ h_\ell^{-1})_*\mu^{(\ell)}.$$
For $\ell=0,\dots,N-1$ and $k$ large enough, we now claim that
$$\mu^{(k,\ell)}_{\gamma_k(t)}=\mu^{(k,\ell+1)}_{\gamma_k(t)}\quad\text{for }t\in\lft[\frac{\ell}{N},\frac{\ell+1}{N}\rgt].$$
Once this is done, the definition
$$\mu_{k,t}\defeq \mu^{(k,\ell)}_{\gamma_k(t)}\quad\text{for }t\in\lft[\frac{\ell}{N},\frac{\ell+1}{N}\rgt]$$
yields a continuous choice of a generator $\mu_{k,t}\in H_n(\XX_k,\XX_k\setminus\{\gamma_k(t)\})$
and, since $\mu^{(k,0)}=-\mu^{(k,N)}$, we have $\mu_{k,0}=-\mu_{k,N}$.
This says that $\gamma_k$ is orientation-reversing, as desired.

To check the last claim, we let $U^{(\ell)}$ denote the connected component
of $B^{(\ell)}\cap B^{(\ell+1)}$ including the curve $\gamma([\frac{\ell}{N},\frac{\ell+1}{N}])$. By definition of $\mu^{(\ell)}$ we have
$$\mu^{(\ell)}=\mu^{(\ell+1)}\quad\text{on }U^{(\ell)}.$$
Hence, given $q\in \gamma_k([\frac{\ell}{N},\frac{\ell+1}{N}])$, letting
$$x'\defeq h_{k,\ell}^{-1}(q),\quad x''\defeq h_{k,\ell+1}^{-1}(q),\quad q'\defeq h_\ell(x'),\quad q''\defeq h_{\ell+1}(x''),$$
by the closeness of the maps $h_\ell$ and $h_{k,\ell}$ we have
$$d\lft(q',\gamma\lft(\frac{\ell}{N}\rgt)\rgt)
\le \tilde d(q',q)+d_k\lft(q,\gamma_k\lft(\frac{\ell}{N}\rgt)\rgt)+\tilde d\lft(\gamma_k\lft(\frac{\ell}{N}\rgt),\gamma\lft(\frac{\ell}{N}\rgt)\rgt)
\le \delta_k+\rho+\delta_k'
\le 2\rho$$
for $k$ large enough, where $\delta_k'\to0$ is another vanishing sequence.
Similarly, we have
$$d\lft(q'',\gamma\lft(\frac{\ell+1}{N}\rgt)\rgt)\le2\rho.$$
Thus,
$$q',q''\in B_{3\rho}\lft(\gamma\lft(\frac{\ell+1/2}{N}\rgt)\rgt)\subseteq\XX.$$
This ball is connected and included in $B_{9\rho}(\gamma(\frac{\ell}{N}))\cap B_{9\rho}(\gamma(\frac{\ell+1}{N}))$, and thus it is included in $U^{(\ell)}$. We deduce that
$$q',q''\in U^{(\ell)}.$$
Hence, defining the two orientations
$$\mu'\defeq (h_\ell^{-1})_*\mu^{(\ell)},\quad\mu''\defeq (h_{\ell+1}^{-1})_*\mu^{(\ell+1)}$$
on the Euclidean ball $B_{10\rho}(0^n)$ and setting $A\defeq h_\ell^{-1}(U^{(\ell)})$
(so that $A$ and $h_{\ell+1}^{-1}\circ h_\ell(A)$ are open subsets of $B_{10\rho}(0^n)$),
we see that $A$ contains $x',x''$ and the homeomorphism
$$h_{\ell+1}^{-1}\circ h_\ell\Big|_A:A\to h_{\ell+1}^{-1}\circ h_\ell(A)$$
maps the orientation $\mu'$ to $\mu''$. To conclude, it suffices to check that
$$(h_{k,\ell+1}^{-1}\circ h_{k,\ell})_*\mu'=\mu''$$
as well; note that $x',x''\in B_{6\rho}(0^n)$, whose closure is included in the domains of both
$h_{\ell+1}^{-1}\circ h_\ell$ and $h_{k,\ell+1}^{-1}\circ h_{k,\ell}$.
These two homeomorphisms map $B_{6\rho}(0^n)$ to a superset of $B_{3\rho}(0^n)$
(for instance, we have $h_\ell(B_{6\rho}(0^n))\supseteq B_{5\rho}(\gamma(\frac{\ell}{N}))\supseteq B_{4\rho}(\gamma(\frac{\ell+1}{N}))$, and the image of the latter through $h_{\ell+1}^{-1}$ includes $B_{3\rho}(0^n)$),
and hence the previous assertion follows from the fact that
$$|h_{\ell+1}^{-1}\circ h_\ell(x)-h_{k,\ell+1}^{-1}\circ h_{k,\ell}(x)|\le 2\rho$$
for all $x\in B_{6\rho}(0^n)$, which guarantees that
$$[(h_{k,\ell+1}^{-1}\circ h_{k,\ell})_*\mu']_0=[(h_{\ell+1}^{-1}\circ h_{\ell})_*\mu']_0$$
(as the two homeomorphisms induce the same map $H_n(\bar B_{6\rho}(0^n),\partial B_{6\rho}(0^n))\to H_n(\mathbb R^n,\mathbb R^n\setminus\{0\})$).
Note that the last bound holds since
$$\tilde d(h_{k,\ell+1}\circ h_{\ell+1}^{-1}(y),h_{k,\ell}\circ h_{\ell}^{-1}(y))
\le\tilde d(h_{k,\ell+1}\circ h_{\ell+1}^{-1}(y),y)+\tilde d(y,h_{k,\ell}\circ h_{\ell}^{-1}(y))
\le2\delta_k\le\rho,$$
where we let $y\defeq h_\ell(x)$, and since $h_{k,\ell+1}^{-1}$ increases distances by at most $\rho$.
\end{proof}

\subsection{Proof of Theorem \ref{nonstability}}
To simplify the notation, we set $R=1$. This will play no real difference in the proof.
We will use the conclusions (and the notation) of Theorem \ref{defndouble} freely.
	We start by recalling the conclusion of Lemma \ref{lemstupid}, i.e.
	\begin{equation}\label{bgrfdvs}
		\widehat d _k(\hat p_k,\Gamma_k\hat p_k)<  2\quad\text{for every $k$.}
	\end{equation}
	
	Note that, since $(\XX,d,\HH^n)$ is non-collapsed, using \cite[Theorem 1.3]{DPG17}, we have $\HH^n(B_1(\hat p))>0$, and hence $\inf_k \HH^n(B_1(\hat p_k))>0$. Therefore, up to subsequences, 
	$(\widehat\XX _k,\widehat d _k,\HH^n,\hat p_k)\rightarrow (\widehat\XX ,\widehat d ,\HH^n,\hat p)$, for some pointed non-collapsed $\RCD(-(n-1) ,n)$ space with no boundary (see \cite[Theorem 1.6]{brueboundary}). 
	We are going to prove next that $(\XX,d,\HH^n)$ is non-orientable and $(\widehat\XX ,\widehat d ,\HH^n)$ is indeed the orientable double cover as in Theorem \ref{defndouble}, so that the full sequence converges without the need of extracting subsequences.
	
	Notice that the maps $\Gamma_k: \widehat X_k\rightarrow\widehat X_k$ and $\pi_k:\widehat X_k\rightarrow X_k$ are $1$-Lipschitz, so that, up to taking a subsequence, we have limit maps $\Gamma:\widehat\XX \rightarrow\widehat\XX $ and $\pi:\widehat\XX \rightarrow \XX$. Of course,  $\pi$ is $1$-Lipschitz, $\pi\circ \Gamma=\pi$, and $\Gamma$ is an involutive isometry. 
	Now the key observation is the following: if we have   $\widehat X_k\ni\hat x_k\rightarrow \hat x\in\widehat\XX $, then 
	\begin{equation}
		d_k(\pi_k(\hat x_k),p_k)=\min\{\widehat d _k(\hat x_k,\hat p_k), \widehat d _k(\hat x_k,\Gamma_k\hat p_k)\}\le \widehat d _k(\hat x_k,\hat p_k)+ \widehat d _k(\hat p_k,\Gamma_k\hat p_k)\le  \widehat d _k(\hat x_k,\hat p_k)+2
	\end{equation}
	is uniformly bounded, as well as 
	\begin{equation}
		\widehat d _k(\Gamma_k\hat x_k,\hat p_k)\le 	\widehat d _k(\Gamma_k\hat x_k,\Gamma_k\hat p_k)+ 	\widehat d _k(\Gamma_k\hat p_k,\hat p_k)\le \widehat d _k(\hat x_k,\hat p_k)+2.
	\end{equation}
	Also, we obtain
	\begin{equation}\label{vfeadsaaaaa}
		d(\pi(\hat x),\pi(\hat y))=\min\{\widehat d (\hat x,\hat y),\widehat d (\hat x,\Gamma\hat y)\}\quad\text{for every }\hat x,\hat y\in\widehat\XX .
	\end{equation}
	In particular, we have
	\begin{equation}\label{vfeadsaaaa}
		\pi^{-1}(\pi(\hat x))=\{\hat x,\Gamma\hat x\}\quad\text{for every }\hat x\in\widehat\XX .
	\end{equation}
	%Take indeed $\hat x,\hat y\in\pi^{-1}(x)$ and take $(\hat x_k)_k,(\hat y_k)_k$ with $\widehat\XX_k\ni \hat x_k\rightarrow \hat x$, $\widehat\XX _k\ni \hat y_k\rightarrow \hat y$. Then, we follow the same easy argument as for the proof of Theorem \ref{defndouble}.

%\bigskip
    
	Fix $\epsilon>0$ small and let $\delta\in (0,1)$ given by Theorem \ref{prel2} below (with this choice of $\epsilon$), thus depending only upon $n$, and set $A\defeq  A_{\delta/4}(X)$.
    In particular, $A\subseteq\XX$ is dense and $\dim_\HH(\XX\setminus A)\le n-2$.
	As in the proof of Theorem \ref{defndouble}, the combination of \eqref{vfeadsaaaaa} and \eqref{vfeadsaaaa} implies that 
	\begin{equation}\label{hausa}
		\dim_\HH(\widehat\XX \setminus\widehat A)\le n-2,
	\end{equation}
	where $\widehat A\defeq\pi^{-1}(A)$ is open. 
	
	Now we claim that
	\begin{equation}\label{vrfdcaasss}
		\Gamma\hat x\ne \hat x\qquad\text{for every }\hat x\in \widehat A.
	\end{equation}
	For $\hat x\in\widehat A$, consider $x\defeq\pi(\hat x)$ and take $(x_k)_k$,  where $\XX_k\ni x_k\rightarrow x$. By Theorem \ref{prel2} in scale-invariant form, we see that, for $k$ large enough, for a suitable $r_x<\delta/4$ the ball $B_{r_{x}}(x_k)$ is homeomorphic to an open set of $\RR^n$, hence is orientable, and this forces
	$\widehat d _k (\hat x_k, \Gamma_k \hat x_k)\ge2  r_{x}$ for every $k$ large enough, by Lemma \ref{lemstupid}, with the obvious meaning for $\hat x_k$. %Hence, recalling \eqref{vfeadsaaaaa} (up to taking $\Gamma\hat y$ in place of $\hat y$)
	%\begin{equation}
	%	2r_y\le \widehat d (\hat y, \Gamma \hat y)\le %\widehat d  (\hat y, \hat x)+\widehat d (\hat x, \Gamma \hat x)+ \widehat d (\Gamma \hat x,\Gamma \hat y)\le r_y/4+ \widehat d (\hat x, \Gamma \hat x)+ r_y/4,
	%\end{equation} 
	As $k\rightarrow\infty$, we obtain \eqref{vrfdcaasss}.

	By Lemma \ref{moltoconnesso} with \eqref{hausa}, $\widehat A$ is a length space. In particular, we can take a  curve of finite length $\hat \gamma:[0,1]\rightarrow\widehat A$ joining $\hat q$ to $\Gamma\hat q$, where $\hat q$ is any point in $\widehat A$.	
	We set $\gamma\defeq\pi\circ\hat\gamma:[0,1]\rightarrow A$, which is a loop based at $q\defeq\pi(\hat q)$ of finite length.  Arguing as in the first proof of Theorem \ref{orientstable}, for $k$ large enough,  a neighborhood of $\gamma$ (independent of $k$) is homeomorphic to an open subset of $\XX_k$, say $B_k$, which is also a topological manifold. 
	Take  $\widehat\XX _k\ni \hat q_k\rightarrow \hat q$ and $\hat \gamma_k:[0,1]\rightarrow \XX_k$ joining $\hat q_k$ to $\Gamma_k\hat q_k$, uniformly converging to $\hat \gamma$, and set $\gamma_k\defeq\pi_k\circ\hat\gamma_k$, uniformly converging to $\gamma$. Notice that eventually the image of $\gamma_k$ will be contained in $B_k$, so that $\gamma_k$ are orientation-reversing loops based at $q_k\defeq\pi_k(\hat q_k)$. We thus see that $\gamma$ is orientation-reversing. Hence, $(\XX,d,\HH^n)$ is a non-orientable, non-collapsed $\RCD(-(n-1) ,n)$ space with no boundary. Note that we could have concluded also by arguing as in the second proof of Theorem \ref{orientstable}.
	
	We now prove that $(\widehat\XX ,\widehat d ,\HH^n)$ coincides with the orientable double cover given by Theorem \ref{defndouble}. First, combining \eqref{vrfdcaasss}, \eqref{vfeadsaaaaa} and \eqref{vfeadsaaaa}, as in the proof of Theorem \ref{defndouble}, we have that $\pi:\widehat A\rightarrow A$ is a local isometry. From this and Theorem \ref{prel2} in scale-invariant form, it follows that $\widehat A$ is a topological manifold, and  by Theorem \ref{vfeadadcsd} %(and Theorem \ref{defndouble}, applied to the spaces $\widehat\XX _k$)
    $\widehat A$ is orientable. We have already proved that  $\widehat A$ is a length space.   
	Thus, the conclusion follows from the  uniqueness part of 	Theorem \ref{defndouble}.

	Now we prove that $\Delta_k\rightarrow \Delta$. Recall that these maps are uniformly Lipschitz%, so that, even though we  we have extracted subsequences from $\XX_k$ at the beginning of this proof, 
     and it is enough to show that if $\XX_k\ni q_k\rightarrow q\in\XX$, then $\Delta_k q_k\rightarrow\Delta q$. Fix $\hat q\in\widehat\XX $ with $\pi(\hat q)=q$, and take also a sequence $(\hat q_k)_k$ with $\pi_k(\hat q_k)=q_k$. Now
	\begin{equation}
		\widehat d _k (\hat q_k,\hat p_k)\le d(q_k,p)+ \widehat d _k (\hat p_k,\Gamma_k\hat p_k)
	\end{equation}
	is uniformly  bounded by \eqref{bgrfdvs}, so that any subsequence of $(\hat q_k)_k$ has limit points in $\widehat\XX $.
	Also, by the convergence of $\pi_k$ and \eqref{vfeadsaaaa}, any such limit point is either $\hat q$ or $\Gamma\hat q$. Hence, up to exchanging some $\hat q_k$ with $\Gamma_k\hat q_k$, we can assume that $\hat q_k\rightarrow\hat q$. Hence, also $\Gamma_k\hat q_k\rightarrow\Gamma\hat q$, so that $\widehat d _k(\hat q_k,\Gamma_k\hat q_k)\rightarrow \widehat d (\hat q,\Gamma\hat q) $, which is the conclusion.\qed

\subsection{Proof of Theorem \ref{defndoubleIntro}}

In view of Theorem \ref{orientstable}, $(M^n_k, g_k, p_k)$ is not orientable for  $k$ sufficiently large. Hence, by Theorem \ref{nonstability}, we have $(\widehat M_k^n, \widehat g_k, \widehat p_k) \xrightarrow{GH}(\widehat \XX,\widehat d,\HH^n, \widehat p)$ where $\pi: (\widehat \XX, \widehat d, \HH^n) \rightarrow (\XX, d, \HH^n)$ is the orientable ramified double cover as in Theorem \ref{defndouble} and $\pi(\widehat p) = p$. By stability of the $\RCD$ condition, we obtain that $(\widehat \XX, \widehat d, \HH^n)$ is an $\RCD(-(n-1),n)$ space. Properties (i), (ii), and the uniqueness of the ramified double cover follow from Theorem \ref{defndouble}.
\qed

\subsection{Proof of Theorem \ref{thm:localnonorientable}}

If $x$ is locally non-orientable, then every tangent cone at $x$ is non-orientable, by Theorem \ref{nonstability}. Otherwise, every tangent cone at $x$ is orientable, by Theorem \ref{orientstable}. Hence, recalling Example \ref{ex:cones}, we have established the equivalence between (1) and (3).
 
The equivalence between (1) and (2) follows from Lemma \ref{lemstupid}.
	
We finally prove the volume estimate on $X_{NLO}$. Let $x \in \XX$ be a locally non-orientable point. We show that
	\begin{equation}\label{zz}
		d_{GH}(B_r(x), B_r((0^{n-4},z)) \ge \epsilon(n) r,
		\quad \text{for every $r\in (0,1)$},
	\end{equation}
    where $(0^{n-4},z) \in \RR^{n-4} \times C(Z^3)$ is a tip point, and 
    where $Z^3$ is a non-collapsed $\RCD(2,3)$ space. This together with \cite{CJNrect} would imply the volume estimate as  $x\in S^{n-5}_{\epsilon(n)}$.
    
    To prove \eqref{zz} we argue by contradiction. A diagonal and scaling argument provides a sequence of smooth uniformly non-collapsing manifolds $(M^n_k,g_k,p_k) \xrightarrow{GH} (\RR^{n-4} \times C(Z^3), d, p)$ such that $B_1(p_k)$ is not orientable. By \cite{brupisem}, $Z^3$ is a topological manifold covered by $S^3$, and hence it is orientable (see Example \ref{ex:RCD(2,3)or}). This provides a contradiction as a consequence of our stability result, Theorem \ref{thm:stabnonorRicciLimit}.\qed

\section{Volume Form on $\RCD$ Spaces}\label{sectionforms}

		In this section, we prove the main results relating the orientability of $\RCD$ spaces in the sense of Definition \ref{dfn:orientable}. As a first result, we show that the existence of an open subset which is an orientable manifold, with sufficiently small complement, induces a volume form.

			\begin{thm}[Orientability vs Volume Form I]
			\label{brfvaed}
			Let $(\XX,d,\HH^n)$ be a non-collapsed $\RCD(-(n-1),n)$ space without boundary. 
			Assume that there exists $A\subseteq\XX$ open which is an orientable topological manifold, such that $\HH^{n-1}(\XX\setminus A)=0$. 
			Then there exists $\omega_X\in L^\infty(\Lambda^n T^{*}\XX)$ with $|\omega_X|=1\ \HH^n$-a.e.\ such that the following hold.
			\begin{enumerate}[label=(\arabic*)]
				\item For every $\eta\in\mathrm{TestForms}_n(\XX)$, we have $\omega_X\,\cdot\,\eta\in W^{1,2}(\XX)$, with 
				\begin{equation}\label{vvvfsds}
					\nabla (\omega_X\,\cdot\,\eta)=\omega_X\,\cdot\,\nabla \eta\quad\HH^n\text{-a.e.}
				\end{equation}

				\item 	
				For every $\eta\in\mathrm{TestForms}_{n-1}(\XX)$ with compact support,
				\begin{equation}\label{acdsac}
					\int_\XX \omega_X\,\cdot\,d\eta\,\dd\HH^n=0.
				\end{equation}
			\end{enumerate}
		\end{thm}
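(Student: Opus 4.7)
The plan is to build $\omega_X$ locally on a suitable regular portion of $A$, use orientability to glue the local pieces into a global unit section, extend by zero across the negligible set $\XX\setminus A$, and finally verify the two analytic properties via a capacity-type cutoff argument.

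For the construction, I first replace $A$ by $A'\defeq A\cap A_\epsilon(\XX)$ for some $\epsilon<\epsilon(n)$; this is still an open orientable topological manifold (as an open subset of $A$), and the complement $\XX\setminus A'$ still has $\HH^{n-1}$-measure zero, since $\dim_\HH(\XX\setminus A_\epsilon(\XX))\le n-2$. On $A'$ I have $\delta$-Reifenberg charts together with almost-splitting harmonic maps, providing countable covers $\{U_i\}$ with local $L^\infty$ cotangent frames $(df^i_1,\dots,df^i_n)$. Normalizing $\alpha_i\defeq(df^i_1\wedge\dots\wedge df^i_n)/|df^i_1\wedge\dots\wedge df^i_n|$ gives a unit section of $\Lambda^n T^*\XX$ over $U_i$. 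Since the essential dimension is $n$, $\Lambda^n T^*\XX$ is a rank-one $L^\infty$-module on $A'$, so on every overlap we have $\alpha_i=s_{ij}\alpha_j$ with $s_{ij}\in\{-1,+1\}$ $\HH^n$-a.e. The topological orientation of $A'$ allows me to choose signs $\sigma_i$ so that $\omega_i\defeq\sigma_i\alpha_i$ agrees across all overlaps, and the resulting global section (extended by $0$ on $\XX\setminus A'$) is the desired $\omega_X$.

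For property (1), writing $\eta=(\omega_X\cdot\eta)\,\omega_X$ on $A'$ by rank-oneness, the scalar $\omega_X\cdot\eta$ equals $\sigma_i(\alpha_i\cdot\eta)$ on $U_i$, which is a bounded Borel function locally in $W^{1,2}$ since it is a combination of test-function coefficients in the frame $\alpha_i$. The Leibniz rule, combined with the metric compatibility identity $\omega_X\cdot\nabla\omega_X=\tfrac12\nabla|\omega_X|^2=0$ (valid because $|\omega_X|$ is constant), gives the pointwise formula $\nabla(\omega_X\cdot\eta)=\omega_X\cdot\nabla\eta$ a.e.\ on $A'$. For property (2), using that $\HH^{n-1}(\XX\setminus A')=0$ implies zero $2$-capacity on a non-collapsed $\RCD$ space, I choose Lipschitz cutoffs $\chi_k$ supported in $A'$ with $0\le\chi_k\le 1$, $\chi_k\to 1$ pointwise, and $\|\nabla\chi_k\|_{L^2}\to 0$. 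Then the Leibniz rule on forms gives
\[
\int_\XX \chi_k\,\omega_X\cdot d\eta\,\dd\HH^n \;=\; \int_\XX\omega_X\cdot d(\chi_k\eta)\,\dd\HH^n \;-\; \int_\XX\omega_X\cdot(d\chi_k\wedge\eta)\,\dd\HH^n.
\]
The first term vanishes by Stokes on the topological manifold $A'$ applied through a partition of unity subordinate to the oriented atlas (the integrand is a compactly supported ``divergence'' inside $A'$), while the second is bounded by $\|\eta\|_\infty\,\|\nabla\chi_k\|_{L^2}\,\|\omega_X\|_{L^2(K)}$ on a fixed compact $K\supseteq\supp(\eta)$, hence tends to $0$.

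The main obstacle is making Step 2 fully rigorous within Gigli's $\RCD$ calculus: one must justify that the pointwise formula $\nabla(\omega_X\cdot\eta)=\omega_X\cdot\nabla\eta$ truly holds a.e., and that the resulting function lies in $W^{1,2}(\XX)$ globally rather than just locally on $A'$. The first requires identifying $\omega_X$ with a \emph{parallel} unit section of the rank-one top module, with both the constant-norm condition and the orientation-based sign consistency as the crucial inputs; the second relies on upgrading local $W^{1,2}$-regularity to global regularity through the zero $2$-capacity of the singular set, which is precisely what the assumption $\HH^{n-1}(\XX\setminus A)=0$ supplies. This same capacity input is what enables Stokes in Step 3, so both hypotheses in the theorem statement play their role at exactly one stage of the argument.
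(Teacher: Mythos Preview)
Your overall strategy---replacing $A$ by $A\cap A_\epsilon(\XX)$, building local unit forms from $\delta$-splitting maps, gluing via the orientation, and using capacity cutoffs to cross the singular set---is the paper's. But the steps you treat as routine are the technical heart, and one is wrong as stated.

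The two real gaps are these. First, you assert that the transition signs $s_{ij}\in\{\pm1\}$ are locally constant and match the topological orientation, but the splitting charts are only bi-H\"older, so there is no smooth Jacobian to read; the paper needs Proposition \ref{brgvfsd} (a blow-up argument at good points) to tie the analytic sign of $du_1\wedge\cdots\wedge du_n$ versus $du'_1\wedge\cdots\wedge du'_n$ to $\deg(u'\circ u^{-1})$. Second, you claim $\alpha_i\cdot\eta$ is locally $W^{1,2}$ because it is ``a combination of test-function coefficients in the frame $\alpha_i$'', but $\alpha_i=\nu/|\nu|$ and $|\nu|^{-1}$ is neither bounded nor a test function. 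The paper's Proposition \ref{integrability} shows $|\nu|^{-1}\in L^r\cap W^{1,q}$ for all $r<\infty$, $q<2$, via an iterated covering argument using the transformation theorem; this regularity is what makes your formal identity $\omega_X\cdot\nabla\omega_X=\tfrac12\nabla|\omega_X|^2=0$ meaningful and yields \eqref{bssgfdvcac}.

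Most seriously, for property (2) you write that $\int\omega_X\cdot d(\chi_k\eta)\,\dd\HH^n$ vanishes ``by Stokes on the topological manifold $A'$''. There is no such Stokes theorem: $A'$ has no smooth structure, $\omega_X$ is only $L^\infty$, and integration is against $\HH^n$. The paper proves the local vanishing in Proposition \ref{sdfdascasdca}(2) by approximating $|\nu|^{-1}$ with test functions $f_k$, applying the codifferential identity $\delta\omega=-\sum_i\nabla_{e_i}\omega\mres e_i$ (Lemma \ref{vefdaa}) to $f_k\nu$, and passing to the limit via the $W^{1,q}$ regularity of $|\nu|^{-1}$ together with \eqref{bsvfdaa}. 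Your capacity cutoff correctly reduces to the local statement (and matches the paper's $\varphi_\sigma$), but the local statement itself requires this analytic work within the $\RCD$ calculus, not topology.
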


	   Our second result shows the converse implication: the existence of a volume form implies the existence of an open subset which is an orientable manifold, with sufficiently small complement.

			\begin{thm}[Orientability vs Volume Form II]
			\label{vfedacadc}
			Let $(\XX,d,\HH^n)$ be a non-collapsed $\RCD(-(n-1),n)$ space without boundary.  Assume that there exists $\omega\in L^\infty (\Lambda^n T^* \XX)$ 
			such that one of the following holds:
			\begin{enumerate}[label=(\arabic*)]
				\item  $|\omega|=1\ \HH^n$-a.e.\ and, for every  $\eta\in\mathrm{TestForms}_{n}(\XX)$ with compact support, $\omega\,\cdot\,\eta\in W^{1,2}(\XX)$;
				\item $\omega\ne 0$ and, for every  $\eta\in\mathrm{TestForms}_{n-1}(\XX)$ with compact support,
				\begin{equation}
					\int_\XX \omega\,\cdot\,d\eta\,\dd\HH^n=0.
				\end{equation}
			\end{enumerate}
		     Then, for $\epsilon<\epsilon(n)$, $A_\epsilon(\XX)$ is orientable.
		\end{thm}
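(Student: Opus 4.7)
The plan is to produce, from the given $\omega$, local topological orientations on the open connected manifold part $A\defeq A_\epsilon(\XX)$ and check that they glue consistently. Since $\HH^{n-1}(\XX\setminus A)=0$ (indeed $\dim_\HH(\XX\setminus A)\le n-2$), once $A$ is shown to be orientable we conclude. Locally, I would cover $A$ by balls $B=B_r(x)\subseteq A$ on which harmonic approximation (via Cheeger--Colding $\epsilon$-splitting maps) provides $n$ functions $y_1,\dots,y_n$ that give a bi-Hölder chart and satisfy $|dy_1\wedge\dots\wedge dy_n|\ge c>0$ pointwise a.e.\ on a smaller concentric ball $B'$. Choosing a Lipschitz cutoff $\chi$ supported in $B$ and equal to $1$ on $B'$, the form $\eta=\chi\,dy_1\wedge\dots\wedge dy_n$ lies in $\mathrm{TestForms}_n(\XX)$ with compact support.

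In case (1), set $f\defeq\omega\,\cdot\,\eta$, which by hypothesis belongs to $W^{1,2}(\XX)$; by the Cauchy--Schwarz-type bound for top forms combined with $|\omega|=1$ a.e., we get $|f|\ge c'>0$ a.e.\ on $B'$. The key step is to show $f$ has a definite sign on $B'$. Passing to a quasi-continuous representative $\tilde f$ defined outside a set $N$ of $W^{1,2}$-capacity zero, the sets $\{\tilde f>0\}$ and $\{\tilde f<0\}$ are relatively open in $B'\setminus N$, disjoint, and cover $B'\setminus N$ up to a negligible set. Since capacity-zero sets have vanishing $\HH^{n-1}$ measure, Lemma \ref{moltoconnesso} shows that any two points of $B'\setminus N$ can be joined by a geodesic avoiding $N$; hence $B'\setminus N$ is connected and one of the two sets must be empty. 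Declaring the chart $(y_1,\dots,y_n)$ to be orientation-preserving when $f>0$ on $B'$ (and reversing otherwise) defines a local topological orientation.

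In case (2), the weak coclosure $\int\omega\,\cdot\,d\eta\,\dd\HH^n=0$ plays the role of elliptic regularity. I would test this condition against $(n-1)$-forms of the shape $\chi\,g\,dy_1\wedge\dots\wedge\widehat{dy_i}\wedge\dots\wedge dy_n$, with $g$ a test function, to extract, within the $\RCD$ calculus, the analogue of the smooth fact that a coclosed top form is locally a constant multiple of the volume form. This upgrades the scalar pairing $f\defeq\omega\,\cdot\,(dy_1\wedge\dots\wedge dy_n)$ to a function of locally definite sign on $B'$ (wherever it does not vanish); combined with $\omega\not\equiv 0$ and propagation across overlapping regular charts, it yields a local orientation exactly as in case (1).

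Finally, the orientations produced in neighboring charts glue consistently because both are read off from the same $\omega$: on the overlap of two charts $(y_i)$ and $(y'_j)$, the ratio $f/f'$ of the two pairings has constant sign by the same quasi-continuity-plus-connectedness argument, and this ratio is precisely what records whether the coordinate change preserves or reverses orientation. The main obstacle is the sign-is-locally-constant argument in case (1): one must use carefully that $W^{1,2}$-capacity zero sets have vanishing $\HH^{n-1}$ measure and then invoke Lemma \ref{moltoconnesso} to upgrade this into a connectedness statement for $B'\setminus N$. A secondary difficulty is the elliptic-regularity step in case (2), where the smooth identity ``$\delta\omega=0\Rightarrow\omega=c\,d\mathrm{vol}$ on a connected chart'' must be reformulated and verified within the test-form calculus of non-collapsed $\RCD$ spaces without boundary.
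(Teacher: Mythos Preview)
Your outline has the right shape—local charts from $\epsilon$-splitting maps, a sign coming from pairing with $\omega$, and gluing via overlaps—but there are two genuine gaps.

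\textbf{The lower bound $|dy_1\wedge\dots\wedge dy_n|\ge c>0$ a.e.\ is false.} On an ice-cream cone $C(S^1)$ with cone angle close to $2\pi$, the tip lies in $A_\epsilon(\XX)$, yet the splitting map $u=(r^{1+\epsilon}\cos((1+\epsilon)\theta),\,r^{1+\epsilon}\sin((1+\epsilon)\theta))$ has $|du_1\wedge du_2|=(1+\epsilon)^2 r^{2\epsilon}\to 0$ at the tip. In general Theorem~\ref{prel2} only gives $|\nu|>0$ a.e., not a uniform lower bound. This breaks your case~(1) argument: you do not get $|f|\ge c'>0$ a.e., so the quasi-continuity/connectedness step has nothing to bite on. The paper handles this by proving the quantitative integrability $|\nu|^{-1}\in L^r\cap W^{1,q}$ for all $r<\infty$, $q<2$ (Proposition~\ref{integrability}), then \emph{normalizing}: one works with $\omega_{B}=\nu/|\nu|$ and shows $f=\omega\cdot\omega_{B}\in W^{1,4/3}$ with $|f|=1$ a.e., whence $f$ is constant by the standard Sobolev fact. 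Your capacity argument is not needed (and as written is incomplete: quasi-continuity does not give that $\{\tilde f>0\}$ is open in $B'\setminus N$ for a \emph{fixed} polar set $N$).

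\textbf{The link ``sign of $f$ $\Leftrightarrow$ topological degree of the chart'' is not established.} You assert that on an overlap the ratio $f/f'$ ``is precisely what records whether the coordinate change preserves or reverses orientation,'' but this is the nontrivial content of Proposition~\ref{brgvfsd}: one must show that if $du_1\wedge\dots\wedge du_n=h\,du_1'\wedge\dots\wedge du_n'$ with $h>0$ a.e.\ on a connected piece, then $\deg(u'\circ u^{-1})=+1$. The paper proves this by a blow-up at a good point where both splitting maps converge to linear maps $L,L'$, identifies $d(u'\circ u^{-1})$ with $L'\circ L^{-1}$, and reads the sign of the determinant from $\det(\nabla u_a\cdot\nabla u_b')$. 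Without this, your ``declare the chart orientation-preserving when $f>0$'' prescription has no a priori relation to the \emph{topological} orientation of $A_\epsilon(\XX)$, and the gluing check is circular.

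For case~(2), your sketch is correct in spirit but omits the actual work: the paper (Proposition~\ref{bgfvsdca}, Step~2) builds an orthonormal frame $(e_i)$ from the splitting map via Gram--Schmidt with controlled Sobolev regularity, tests against $\varphi\,e_2\wedge\dots\wedge e_n$ and its cousins, and reduces to $\int f\,\mathrm{div}\,Z=0$ for all test vector fields $Z$, whence $f$ is constant by Lemma~\ref{divconst}. The integrability of $|\nu|^{-1}$ is again essential here to make the approximations converge.
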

            
		  \begin{rem}\label{remmodulo}
      Actually, we are going to prove the claim of Theorem \ref{vfedacadc} with ${(1)}$ possibly replaced by the slightly weaker
			\begin{enumerate}
   \item [(1')] There exists $c>0$ such that $|\omega|\ge c\ \HH^n$-a.e.\ and, for every  $\eta\in\mathrm{TestForms}_{n}(\XX)$ with compact support, $\omega\,\cdot\,\eta\in W^{1,2}(\XX)$.
			\end{enumerate}

  \end{rem}
		
		Finally, we show that the volume form is unique up to changing the sign.

		\begin{thm}[Uniqueness of Volume Form]
			\label{uniqueness}
			Let $(\XX,d,\HH^n)$ be a non-collapsed $\RCD(-(n-1),n)$ space without boundary.  Assume that $\omega_1,\omega_2\in L^\infty(\Lambda^n T^{*}\XX)$ satisfy the assumptions of Theorem \ref{vfedacadc}.
			Then there exists a constant $c\in \RR\setminus\{0\}$ such that $\omega_1=c\omega_2$.
		\end{thm}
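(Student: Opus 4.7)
My plan is to show that $\omega_1 = c \omega_2$ $\HH^n$-a.e.\ for some constant $c \ne 0$, by combining the rank-one structure of top forms on non-collapsed $\RCD$ spaces with the Leibniz-type identity $\nabla(\omega \cdot \eta) = \omega \cdot \nabla \eta$ satisfied by either volume form. First, since the essential dimension of $\XX$ is $n$, the module $L^0(\Lambda^n T^*\XX)$ has pointwise dimension at most one, so there exists a measurable function $f$ with $\omega_1 = f\omega_2$ $\HH^n$-a.e.\ on $\{\omega_2 \ne 0\}$. Under condition (1) (or (1')) this set has full $\HH^n$-measure; under condition (2), I would first argue that $\omega_2$ is nonzero $\HH^n$-a.e., since otherwise restricting the integration-by-parts identity to a neighborhood of its zero set would contradict the orientability produced by Theorem \ref{vfedacadc}. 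In either case $f \in L^\infty(\XX)$, because $\omega_2$ is essentially bounded below on $\{\omega_2 \neq 0\}$.

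Next, I would invoke the Leibniz identity of item (ii) in the moreover part of Theorem \ref{vefdacscdcsd}, which is already available at this stage of the proof. For every test $n$-form $\eta$ with compact support, set $g \defeq \omega_2 \cdot \eta \in W^{1,2}(\XX)$; then $\omega_1 \cdot \eta = fg \in W^{1,2}(\XX)$ and
\begin{equation*}
\nabla(fg) = \omega_1 \cdot \nabla\eta = f(\omega_2 \cdot \nabla\eta) = f \nabla g \quad \HH^n\text{-a.e.}
\end{equation*}
On any open set $U \subseteq \XX$ where $|g| \ge c_U > 0$, the quotient $f = (fg)/g$ lies in $W^{1,2}_{loc}(U)$ by the Sobolev quotient rule (valid since $g$ is bounded below), and then the standard Leibniz rule $\nabla(fg) = f\nabla g + g\nabla f$ combined with the displayed identity forces $g \nabla f = 0$, so $\nabla f = 0$ on $U$.

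Third, for each $x$ in the effective manifold part $A \defeq A_\epsilon(\XX)$, I would choose test functions $f_0, f_1, \ldots, f_n$, with $f_0$ a cutoff equal to $1$ near $x$, such that $\eta \defeq f_0\, df_1 \wedge \cdots \wedge df_n$ produces $g = \omega_2 \cdot \eta$ bounded away from zero on a ball $U \ni x$. This uses that at a regular point the differentials $df_i$ can be arranged to form a local frame of the rank-one module $\Lambda^n T^*\XX$ (by density of test functions and standard calculus in $\RCD$ spaces), combined with the essential lower bound on $\omega_2$. Varying $x$ in $A$, these balls cover $A$, so $\nabla f = 0$ $\HH^n$-a.e.\ on $A$; since $A$ is a connected topological manifold (cf.\ the discussion around \eqref{eq:Aeps}) and $f \in W^{1,2}_{loc}(A)$, this forces $f$ to be $\HH^n$-a.e.\ equal to a constant $c$ on $A$. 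As $\HH^n(\XX \setminus A) = 0$, we obtain $\omega_1 = c\omega_2$ $\HH^n$-a.e.\ on $\XX$, with $c \neq 0$ because $\omega_1 \not\equiv 0$.

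The main obstacle I anticipate is the rigorous production, for each $x \in A$, of a test $n$-form $\eta$ with $\omega_2 \cdot \eta$ quantitatively bounded below near $x$: at a regular point this reduces to selecting test functions whose differentials span the cotangent space, together with an $L^\infty$ lower bound for $\omega_2$ on the relevant neighborhood. Once these local frames are available, the Leibniz-plus-connectedness argument is quite robust and handles simultaneously condition (2) and the weakening (1') of Remark \ref{remmodulo}; the uniqueness of the associated current then follows at once from the injectivity of the map $T \mapsto \omega_T$ mentioned at the end of the proof of Theorem \ref{vefdacscdcsd}.
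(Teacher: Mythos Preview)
Your argument contains a circularity. You invoke the Leibniz identity $\nabla(\omega\cdot\eta)=\omega\cdot\nabla\eta$ from item (ii) of the ``moreover'' part of Theorem \ref{vefdacscdcsd}, claiming it is ``already available at this stage of the proof.'' It is not. In the paper's logical structure, that identity is established only for the \emph{specific} form $\omega_\XX$ constructed in Theorem \ref{brfvaed} (via Proposition \ref{sdfdascasdca}); its validity for an \emph{arbitrary} $\omega$ satisfying (1) or (2) of Theorem \ref{vfedacadc} is deduced \emph{after} uniqueness, precisely by writing $\omega=c\,\omega_\XX$. The paper says this explicitly: ``as a consequence of this [uniqueness] and the proof of Theorem \ref{brfvaed}, any form satisfying either (4) or (5) satisfies also (i) and (ii).'' So using (ii) to prove uniqueness is circular.

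The difficulty is most visible under hypothesis (2): there you are not given $\omega_i\cdot\eta\in W^{1,2}$ at all, nor any lower bound on $|\omega_i|$, so neither the quotient $f=(fg)/g$ nor the relation $\nabla(fg)=f\nabla g$ has any a priori meaning. Your sentence ``$\omega_2$ is essentially bounded below on $\{\omega_2\neq 0\}$'' is false in general for an $L^\infty$ form, and your sketch for ruling out a zero set of $\omega_2$ is not an argument. The paper circumvents all of this by Proposition \ref{bgfvsdca}, which shows directly that on each regular ball $\omega_i=c_i\,\omega_{B_{r_x/4}(x)}$ for a nonzero constant $c_i$; uniqueness then follows from connectedness of $A_{\delta/4}(\XX)$ by comparing each $\omega_i$ to the global $\omega_\XX$. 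If you are willing to use the local identification of Proposition \ref{bgfvsdca}, the Leibniz-plus-quotient manoeuvre becomes superfluous; if you are not, you need a replacement for it, and none is provided.
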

		
		After having recalled some preliminary results, the remaining part of this section will be dedicated to the proof of the theorems stated above. We begin in Section \ref{volumeregularball}, where we study volume forms locally on $\delta$-regular balls. In Section \ref{sectcompat}, we begin gluing local forms by checking the compatibility of local volume forms on the intersection of $\delta$-regular balls.

	\subsection{Preliminaries}\label{sec:Preliminaries}

     In this section, we collect technical results on Sobolev functions and $\delta$-splitting maps in $\RCD$ spaces. We assume the reader to be familiar with the standard terminology of the $\RCD$ theory, referring the reader to \cite{Gigli14} or \cite{GP19} and references therein.
\subsubsection{Sobolev calculus}
	\begin{lem}\label{divconst}
		Let $(\XX,d,\mass)$ be an $\RCD(K,N)$ space. Let $B\subseteq\XX$ be open and connected, and assume that $f\in L^{2}(B)$ is such that  
		\begin{equation}\label{brvfadcas}
			\int_\XX f \mathrm{div}Z\,\dd\mass=0\quad\text{for every $Z\in \mathrm{TestV}(\XX)$ with $\supp(Z)\Subset B$}.
		\end{equation}
		Then $f$ is constant on $B$.
	\end{lem}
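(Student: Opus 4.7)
The plan is to prove that $f$ is constant by first showing that, after multiplying by any test cutoff supported in $B$, the product lies in $W^{1,2}(\XX)$ with an explicit gradient, and then exploiting the locality of the gradient together with the $(1,2)$-Poincar\'e inequality available on $\RCD(K,N)$ spaces.

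First I would establish the following claim: for every $\phi\in\mathrm{Test}(\XX)$ with $\supp(\phi)\Subset B$, one has $\phi f\in W^{1,2}(\XX)$ with
\begin{equation*}
    \nabla(\phi f)=f\,\nabla \phi\qquad\mass\text{-a.e.}
\end{equation*}
The key point is that $\mathrm{Test}(\XX)$ is an algebra, so for any $Z=\sum_i g_i\nabla h_i\in\mathrm{TestV}(\XX)$ the field $\phi Z=\sum_i(\phi g_i)\nabla h_i$ is again in $\mathrm{TestV}(\XX)$ and has $\supp(\phi Z)\Subset B$. The Leibniz rule for divergence gives $\mathrm{div}(\phi Z)=\phi\,\mathrm{div}(Z)+\nabla\phi\,\cdot\, Z$, so applying the hypothesis to $\phi Z$ yields
\begin{equation*}
    0=\int_\XX f\,\mathrm{div}(\phi Z)\,\dd\mass=\int_\XX (\phi f)\,\mathrm{div}(Z)\,\dd\mass+\int_\XX (f\,\nabla\phi)\,\cdot\, Z\,\dd\mass
\end{equation*}
for every $Z\in\mathrm{TestV}(\XX)$. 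Since $\phi f\in L^2(\XX)$ and $f\,\nabla\phi\in L^2(T\XX)$ (as $\nabla\phi$ is bounded with compact support in $B$), the duality characterization of Sobolev functions in the Gigli calculus gives the claim.

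Next I would choose, given any connected open $U$ with $\overline U\subseteq B$, a cutoff $\eta\in\mathrm{Test}(\XX)$ with $\eta\equiv 1$ on a neighborhood of $\overline U$ and $\supp(\eta)\Subset B$. Then $\eta f\in W^{1,2}(\XX)$ and $\nabla(\eta f)=f\,\nabla\eta=0$ $\mass$-a.e.\ on $U$. By the Poincar\'e inequality on $\RCD(K,N)$ spaces, a Sobolev function whose minimal weak upper gradient vanishes on a connected open set is constant there, so $\eta f=f$ is a.e.\ equal to a constant on $U$. Since $B$ is connected and can be exhausted by such sets $U$ (with overlaps forcing the constants to coincide), $f$ is constant on $B$.

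The only delicate point is the first claim — specifically, checking that $\phi Z\in\mathrm{TestV}(\XX)$ and that the duality identity above genuinely places $\phi f$ in $W^{1,2}(\XX)$ — but both facts are standard within the Gigli framework. The subsequent deduction of constancy from a vanishing gradient is entirely routine on a $\mathrm{PI}$ space such as $(\XX,d,\mass)$.
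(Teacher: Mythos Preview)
Your approach matches the paper's: cut off $f$ by a test function $\psi$ supported in $B$, derive $\int(\psi f)\,\mathrm{div}\,Z=-\int(f\nabla\psi)\cdot Z$ for all $Z\in\mathrm{TestV}(\XX)$, conclude $\psi f\in W^{1,2}(\XX)$, and deduce constancy from $\nabla(\psi f)=0$ where $\psi\equiv 1$. The only difference lies in the step you yourself flag as delicate: passing from the integration-by-parts identity (tested against $\mathrm{TestV}(\XX)$ only) to $\psi f\in W^{1,2}(\XX)$. You call this a standard duality characterization; the paper instead \emph{proves} it by plugging in (after approximation) $Z=\nabla h_{2t}(\psi f)$, which via the heat-flow calculus yields
\[
\int_\XX|\nabla h_t(\psi f)|^2\,\dd\mass\le e^{-2Kt}\Big(\int_\XX|f\nabla\psi|^2\,\dd\mass\Big)^{1/2}\Big(\int_\XX|\nabla h_t(\psi f)|^2\,\dd\mass\Big)^{1/2},
\]
hence a uniform Cheeger-energy bound as $t\searrow 0$ and $\psi f\in W^{1,2}(\XX)$. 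This short computation is the actual technical content of the lemma. Your black-box invocation is not wrong on $\RCD$ spaces, but it is precisely the point that requires work: the identity tested only against $\mathrm{TestV}(\XX)$ does not place $\psi f$ in $W^{1,2}$ by definition, and one needs either a density statement for test vector fields in $D(\mathrm{div})$ with respect to the graph norm, or the heat-flow regularization above. You should supply a precise reference for the former or include the latter.
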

	
	\begin{proof}
		By \eqref{brvfadcas}, we see that we conclude if we can show $f\in W^{1,2}(B')$, for every $B'\Subset B$ ball. 
		Now, take  any $\psi\in \mathrm{TestF}(\XX)$ with $\supp\psi\Subset B$ and such that $\psi=1\ \mass$-a.e.\ on $B'$ (see \cite[Lemma 3.1]{Mondino-Naber14}). Then we know that
		\begin{equation}
			\int_\XX f\psi \mathrm{div}Z\,\dd\mass=-\int f\nabla \psi\,\cdot\,Z\,\dd\mass\quad\text{for every $Z\in \mathrm{TestV}(\XX)$}.
		\end{equation}
		We plug in $Z\defeq \nabla h_{2t} (f\psi)$ and we obtain, through an easy approximation argument, that
		\begin{equation}
			\int_\XX  f\psi\mathrm{div}(\nabla h_{2t}(f\psi))\,\dd\mass=-\int f\nabla \psi\,\cdot\,\nabla h_{2t}(f\psi)\,\dd\mass,
		\end{equation}
		which, by the properties of $h_{H,t}$ recalled in \cite[Section 1.4]{bru2019rectifiability} (in particular,  \cite[(1.26), Lemma 1.37 and Proposition 1.38]{bru2019rectifiability}), implies
		\begin{equation}
			-\int_\XX |\nabla h_t(f\psi)|^2\,\dd\mass=-\int_\XX h_{H,t}(f\nabla \psi)\,\cdot\,\nabla h_t(f\psi)\,\dd\mass.
		\end{equation}
		Therefore,
		\begin{equation}
			\begin{split}
				\int_\XX |\nabla h_t(f\psi)|^2\,\dd\mass&\le\bigg( \int_\XX |h_{H,t}(f\nabla \psi)|^2\,\dd\mass\bigg)^{1/2} \bigg(\int_\XX |\nabla h_t (f\psi)|^2\,\dd\mass\bigg)^{1/2}\\&\le e^{-2K t}\bigg( \int_\XX |f\nabla \psi|^2\,\dd\mass\bigg)^{1/2} \bigg(\int_\XX |\nabla h_t (f\psi)|^2\,\dd\mass\bigg)^{1/2},
			\end{split}
		\end{equation}
		where we used Holder's inequality and  \cite[Proposition 3.6.10]{Gigli14}.
		If follows that $f\psi\in W^{1,2}(\XX)$, whence the conclusion.
	\end{proof}

		\begin{lem}\label{sobolevext}
		Let $(\XX,d,\HH^n)$ be a non-collapsed $\RCD(-(n-1),n)$ space and let $f\in L^\infty\cap L^2(\XX)$,  $g\in L^2(\XX)$. Assume that  $C\subseteq\XX$ satisfies $\HH^{n-1}(C)=0$, and that for every $x\in\XX\setminus C$ there is $r_x\in (0,1)$ such that $f\in W^{1,2}(B_{r_x}(x))$ with $|\nabla f|\le g\ \HH^n$-a.e.\ on $B_{r_x}(x)$. 
		Then $f\in W^{1,2}(\XX)$.
	\end{lem}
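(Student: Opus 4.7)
My approach is to prove the distributional integration by parts formula $\int_\XX \nabla f\cdot Z\,\dd\HH^n=-\int_\XX f\,\mathrm{div}(Z)\,\dd\HH^n$ for every $Z\in\mathrm{TestV}(\XX)$ with compact support, and then conclude $f\in W^{1,2}(\XX)$ via the standard characterization of Sobolev functions in $\RCD$ spaces. The conceptual point is that the $L^\infty$ bound on $f$ weakens the capacitary requirement on the exceptional set: we only need cutoffs vanishing near $C$ with small $L^1$-gradient (instead of $L^2$-gradient), and such cutoffs can be built directly from any $\HH^{n-1}$-null cover.

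First I would set $U\defeq\bigcup_{x\in\XX\setminus C}B_{r_x}(x)$, so that $U$ is open with $\XX\setminus U\subseteq C$, hence $\HH^{n-1}(\XX\setminus U)=0$ and also $\HH^n(\XX\setminus U)=0$. Uniqueness of the weak gradient on overlaps shows that the local gradients glue to a well-defined $\nabla f\in L^2(TU)$ with $|\nabla f|\le g$ a.e., which extends by zero to an element of $L^2(T\XX)$ with the same pointwise bound.

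For each $\epsilon>0$ and each ball $B_R\subseteq\XX$, cover $(\XX\setminus U)\cap B_R$ by balls $\{B_{\rho_j}(z_j)\}_j$ with $\rho_j\le\delta_\epsilon\downarrow 0$ and $\sum_j\rho_j^{n-1}<\epsilon$, and set $\psi_j\defeq\max\{0,1-d(\cdot,B_{\rho_j}(z_j))/\rho_j\}$ and $\chi_\epsilon\defeq 1-\min\{1,\sum_j\psi_j\}$. Then $\chi_\epsilon:\XX\to[0,1]$ is Lipschitz, vanishes on a neighborhood of $(\XX\setminus U)\cap B_R$, and by Bishop--Gromov
\begin{equation*}
\int_\XX|\nabla\chi_\epsilon|\,\dd\HH^n\le\sum_j\frac{1}{\rho_j}\HH^n(B_{2\rho_j}(z_j))\le C\sum_j\rho_j^{n-1}\le C\epsilon,
\end{equation*}
while $\HH^n(\{\chi_\epsilon<1\})\le C\delta_\epsilon\epsilon\to 0$, so along a subsequence $\chi_{\epsilon_k}\to 1$ a.e. Now take $Z\in\mathrm{TestV}(\XX)$ with $\supp Z\subseteq B_R$: since $\chi_\epsilon Z$ is supported in $U$, the local $W^{1,2}_{\mathrm{loc}}(U)$-structure yields
\begin{equation*}
\int_\XX\chi_\epsilon\nabla f\,\cdot\,Z\,\dd\HH^n=-\int_\XX f\chi_\epsilon\,\mathrm{div}(Z)\,\dd\HH^n-\int_\XX f\,\nabla\chi_\epsilon\,\cdot\,Z\,\dd\HH^n.
\end{equation*}
The first two terms pass to the limit by dominated convergence (using $f\in L^\infty$, $|\nabla f|\le g\in L^2$, and the finiteness of $\HH^n(B_R)$); the third is bounded by $\|f\|_\infty\|Z\|_\infty\int|\nabla\chi_\epsilon|\le C\|f\|_\infty\|Z\|_\infty\epsilon\to 0$, where the $L^\infty$ hypothesis enters crucially. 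This gives the desired distributional IBP identity, and hence $f\in W^{1,2}(\XX)$ with $|\nabla f|\le g$.

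The main obstacle is the cutoff construction: one must carefully combine the covering-theoretic definition of $\HH^{n-1}$-nullness (with simultaneous smallness of $\sum_j\rho_j^{n-1}$ and $\sup_j\rho_j$) with Bishop--Gromov to convert $1/\rho_j$-Lipschitz hats into an $L^1$-gradient bound of order $\epsilon$. The remainder, including the justification that $\chi_\epsilon Z$ can be used as a test object on $U$ (via heat-flow regularization of $\chi_\epsilon$ if needed) and the final passage to the limit, is then standard.
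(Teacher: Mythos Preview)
Your proof is correct and shares the same key idea with the paper: the cutoff construction from an $\HH^{n-1}$-null cover, with the crucial observation that $f\in L^\infty$ allows an $L^1$ (rather than $L^2$) bound on $|\nabla\chi_\epsilon|$ to suffice. Where you differ is in the endgame. The paper multiplies $f$ by the cutoff, observes $f\varphi_\sigma\in W^{1,2}(B_R(o))$ with $|\nabla(f\varphi_\sigma)|\le |f||\nabla\varphi_\sigma|+g\varphi_\sigma$, and passes to the limit to land first in $\BV(B_R(o))$; it then uses that $|Df|(C)=0$ (from the structure of $\BV$ derivatives on $\HH^{n-1}$-null sets, \cite{ABPrank}) together with locality to get $|Df|\le g$ a.e., and finally upgrades to $W^{1,2}$ via \cite{GigliHan14}. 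You instead test directly against vector fields, prove the integration-by-parts identity $\int \nabla f\cdot Z=-\int f\,\mathrm{div}(Z)$ for all $Z\in\mathrm{TestV}(\XX)$ with compact support, and invoke the distributional characterization of $W^{1,2}$ (the same mechanism the paper isolates in Lemma~\ref{divconst}). Your route is arguably more streamlined, avoiding the $\BV$ detour and the external structure results; the paper's route has the minor advantage of producing explicit $W^{1,2}$ approximants $f\varphi_\sigma$ along the way.
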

	\begin{proof} 
     Set $U\defeq \bigcup_{x\notin C} B_{r_x}(x)\supseteq X\setminus C$.  Now fix $B_R(o)\subseteq\XX$ and notice that $\bar B_R(o)\setminus U$ is a compact subset of $ C$. For any $\sigma>0$, we can cover $\bar B_R(o)\setminus U$ with finitely many $B_{r_i}(x_i)$, such that $\sum_i r_i^{n-1}<\sigma$. Consider, for every $i$, the $1/r_i$-Lipschitz function $\varphi_i$ which is $1$ on $B_{r_i}(x_i)$ and is supported in $\bar B_{2r_i}(x_i)$. If we set $\varphi_\sigma \defeq (1- \sum_i\varphi_i)^+$, we can easily compute that 
		\begin{equation}\label{vfsdaca}
			\lim_{\sigma\searrow 0}\int_\XX [(1-\varphi_\sigma) +|\nabla \varphi_\sigma|]\,\dd\mass=0.
		\end{equation}
		
		We notice also that, for every $\sigma\in (0,1)$, $f\varphi_\sigma\in W^{1,2}(B_R(o))$ with 
		\begin{equation}
			|\nabla (f\varphi_\sigma)|\le |f||\nabla \varphi_\sigma|+g\varphi_\sigma\quad\HH^n\text{-a.e.\ on $B_R(o)$}.
		\end{equation}
		If we let $\sigma\searrow 0$, we deduce that  $f\in \BV(B_R(o))$. Notice that,  as $\HH^{n-1}(C)=0$, we have $|D f|(C)=0$,  by \cite[Theorem 3.4]{ABPrank} and the coarea formula. Then, by locality, $|D f|\le g\ \HH^n$-a.e.\ on $B_R(o)$. Thus, by \cite[Remark 3.5]{GigliHan14}, we get $f\in W^{1,2}(B_R(o))$, and it is now immediate to deduce $f\in W^{1,2}(\XX)$.
	\end{proof}

   \subsubsection{Codifferential and forms}	
   We assume the reader to be familiar with the standard notation regarding differential forms and (co)tangent modules over $\RCD$ spaces, referring to \cite{Gigli14}.

   \begin{lem}\label{vefdaa}
   	Let $(\XX, d,\HH^n)$ be a non-collapsed $\RCD(-(n-1) ,n)$ space and let $\omega\in\mathrm{TestForm}_k(\XX)$. 
   	Then $\omega\in D(\delta)$, namely $\omega$ belongs to the domain of the codifferential $\delta$, with
   	\begin{equation}\label{vfeadsc}
   		\delta\omega=-\sum_{i=1}^n \nabla_{e_i}\omega\mres e_i\quad\HH^n\text{-a.e.},
   	\end{equation}
   	where $(e_i)_{i=1,\dots,n}$ is an orthonormal basis for $L^2(T\XX)$.
   \end{lem}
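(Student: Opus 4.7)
My approach is to verify directly the duality that defines the codifferential: setting $\psi \defeq -\sum_{i=1}^n \nabla_{e_i}\omega \mres e_i$, I need to prove
\[
\int_\XX \omega \,\cdot\, d\eta \, \dd \HH^n = \int_\XX \psi \,\cdot\, \eta \, \dd \HH^n
\]
for every $\eta \in \mathrm{TestForms}_{k-1}(\XX)$ with compact support. By the adjoint definition of $\delta$, this would give both $\omega \in D(\delta)$ and $\delta\omega = \psi$. A preliminary observation is that $\psi$ is basis-independent, since it is a metric trace of the tensor $\nabla\omega$ and hence an intrinsic element of $L^2(\Lambda^{k-1}T^*\XX)$.

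The argument rests on two structural identities from Gigli's $\RCD$ Bochner calculus. The first is the torsion-free identity $d\eta = \sum_i e^i \wedge \nabla_{e_i}\eta$ (with $e^i$ the musical dual of $e_i$), which combined with the adjoint pairing $\omega \,\cdot\, (e^i \wedge \alpha) = (\omega \mres e_i) \,\cdot\, \alpha$ gives
\[
\omega \,\cdot\, d\eta = \sum_{i=1}^n (\omega \mres e_i) \,\cdot\, \nabla_{e_i}\eta.
\]
The second is metric compatibility of $\nabla$, which yields the Leibniz rule
\[
e_i\left((\omega \mres e_i) \,\cdot\, \eta\right) = \left((\nabla_{e_i}\omega) \mres e_i + \omega \mres \nabla_{e_i}e_i\right) \,\cdot\, \eta + (\omega \mres e_i) \,\cdot\, \nabla_{e_i}\eta.
\]
Integrating over $\XX$, summing in $i$, and applying the scalar integration-by-parts formula $\int e_i(f) \, \dd \HH^n = -\int f\,\mathrm{div}(e_i)\, \dd \HH^n$ to the compactly supported $f = (\omega \mres e_i) \,\cdot\, \eta$, the desired identity reduces to the cancellation of the two basis-dependent remainders $\sum_i (\omega \mres \nabla_{e_i}e_i) \,\cdot\, \eta$ and $\sum_i \mathrm{div}(e_i)\,(\omega \mres e_i) \,\cdot\, \eta$.

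The main obstacle is justifying this cancellation and the underlying integration by parts: an orthonormal basis $(e_i)$ of the Hilbert module $L^2(T\XX)$ is an abstract module element and need not consist of smooth vector fields, so that $\mathrm{div}(e_i)$ and $\nabla_{e_i}e_i$ are a priori only distributionally defined. I would handle this by reducing to a local computation on an $\epsilon$-regular ball, where a suitable perturbation of a $\delta$-splitting map provides a concrete orthonormal frame whose components belong to $\mathrm{TestF}(\XX)$; in this setting all Leibniz rules and the integration by parts are classically valid, and the required cancellation becomes the trace identity for the Levi-Civita connection. The global statement then follows from the basis-independence of $\psi$ together with a partition-of-unity argument.
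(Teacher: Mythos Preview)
Your approach is conceptually sound and genuinely different from the paper's. The paper never integrates by parts against the frame: by linearity it takes $\omega=f_0\,df_1\wedge\cdots\wedge df_k$, shows by a direct manipulation of $\int\omega\cdot d\eta$ that $\delta\omega=f_0\,\delta\tilde\omega-\tilde\omega\mres df_0$, then invokes Gigli's explicit formula for $\delta(df_1\wedge\cdots\wedge df_k)$ in terms of $\Delta f_a$ and $\hess f_a$, and separately expands $\sum_i\nabla_{e_i}\omega\mres e_i$ using $\nabla_{e_i}(df_a)=\hess f_a(e_i,\cdot)$. The two expressions are then matched term by term. The orthonormal basis enters \emph{only} through the trace identity $\sum_i\hess f_a(e_i,e_i)=\Delta f_a$, which is exactly where non-collapsedness is used; no differentiability of the $e_i$ themselves is ever needed. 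What this buys is that the proof is entirely algebraic once the known formulas are in hand.

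Your route is the classical Riemannian argument, and the cancellation you isolate does follow from metric compatibility of $\nabla$. But the fix you propose for the regularity obstacle has a real gap. The orthonormal frame obtained from a $\delta$-splitting map by Gram--Schmidt (this is the paper's Proposition~\ref{integrability}) has coefficient matrix $A$ lying only in $L^r\cap W^{1,q}$ with $q<2$; the ice-cream-cone example in the remark following that proposition shows this is sharp. So the $e_i$ are \emph{not} test vector fields, and neither $\nabla_{e_i}e_i$ nor $\mathrm{div}(e_i)$ is available in the clean sense your argument requires. One could try to approximate the $A_{i,j}$ by test functions and pass to the limit, but the two remainder terms you want to cancel then each contain a derivative of $A$, and controlling their difference in the limit is delicate---certainly not the routine step you describe. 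The paper's computation avoids this entirely by never differentiating the frame.
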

   \begin{proof}
   	By linearity, it is enough to prove the statement for $\omega=f_0\,df_1\wedge \cdots\wedge df_k =f_0\tilde{\omega}$, for $f_0,\dots,f_k\in\mathrm{TestF}(\XX)$.
   	Take any $\eta=g_0 \, dg_1\wedge\dots \wedge dg_{k-1}\in\mathrm{TestForm}_{k-1}(\XX)$, with $g_0,\dots,g_{k-1}\in\mathrm{TestF}(\XX)$.
   	We can compute
   	\begin{equation}
   		\begin{split}
   			\int \omega\,\cdot\, d\eta\,\dd\HH^n&= \int df_1\wedge \dots \wedge df_k\,\cdot\, f_0 \, d g_0\wedge \dots \wedge dg_{k-1}\,\dd\HH^n
   			\\&= \int \tilde{\omega}\,\cdot\, d (f_0 g_0)\wedge dg_1\wedge \dots \wedge dg_{n-1}\,\dd\HH^n-\int \tilde\omega\,\cdot\, g_0 \, d f_0\wedge dg_1\wedge \dots \wedge dg_{n-1} \,\dd\HH^n
   			\\&=\int [f_0\delta\tilde{\omega}\,\cdot\, \eta- \tilde{\omega}\,\cdot\, df_0\wedge \eta]\,\dd\HH^n.
   		\end{split}
   	\end{equation}
   	Notice that, by linearity, the above holds for every $\eta\in \mathrm{TestForm}_{k-1}(\XX)$. This means that $\omega\in D(\delta)$, with
   	$$
   	\delta\omega	=f_0\delta\tilde\omega- \tilde{\omega}\mres d f_0.
   	$$
   	Therefore, by \cite[Proposition 3.5.12]{Gigli14},
   	\begin{equation}
   		\begin{split}
   			\delta\omega&= f_0 \sum_{a=1}^k (-1)^a \Delta f_a \, df_1\wedge \cdots \wedge \widehat{df_a}\wedge \cdots \wedge df_k
   			\\&\qquad+f_0\sum_{1\le b<c\le k}(-1)^{b+c} [\hess f_c(df_b,\,\cdot\,)-\hess f_b(df_c,\,\cdot\,)] \wedge \cdots\wedge\widehat {df_b}\wedge\cdots  \wedge\widehat{df_c}\wedge\cdots
   			\\&\qquad - \tilde\omega\mres df_0\\&\eqdef{\bf A}+{\bf B}+{\bf C}.
   		\end{split}
   	\end{equation}
   	Notice that  
   	\begin{equation}
   		\begin{split}
   			{\bf B}&=\sum_{1\le b<c\le k}(-1)^{b+c} [\hess f_c(df_b,\,\cdot\,)-\hess f_b(df_c,\,\cdot\,)] \wedge \cdots\wedge\widehat {df_b}\wedge\cdots\wedge \widehat {df_c}\wedge\cdots
   			\\&=\sum_{i=1}^n	\sum_{1\le b<c\le n}(-1)^{b+c} \hess f_c(e_i,\,\cdot\,) \wedge \cdots\wedge  d f_b( e_i)\wedge\cdots \wedge\widehat {df_c}\wedge\cdots
   			\\&\qquad-\sum_{i=1}^n 	\sum_{1\le b<c\le k}(-1)^{b+c} \hess f_b(e_i,\,\cdot\,) \wedge \cdots\wedge \widehat {df_b}\wedge\cdots \wedge  df_c(e_i) \wedge\cdots
   			\\&=\sum_{i=1}^n (-1)^b\sum_{1\le a, b \le k,\, a\ne b} df_1\wedge \cdots \wedge\hess f_a(e_i,\,\cdot\,) \wedge \cdots\wedge d f_b( e_i) \wedge\cdots.
   		\end{split}
   	\end{equation}
   	Also, 
   	\begin{equation}
   		\begin{split}
   			\sum_{i=1}^n	\nabla_{e_i}\omega\mres e_i &=\sum_{i=1}^n (\nabla f_0\,\cdot\,e_i) \tilde\omega \mres e_i+\sum_{i=1}^n  \sum_{a=1}^k f_0 \, [df_1\wedge\cdots\wedge \hess(f_a)(e_i,\,\cdot\,)\wedge \cdots]\mres e_i
   			\\&=\tilde\omega\mres d f_0+ \sum_{i=1}^n\sum_{a=1}^k [f_0 \, df_1\wedge\cdots\wedge \hess f_a(e_i,\,\cdot\,)\wedge \cdots]\mres e_i.
   		\end{split}
   	\end{equation}
   	
   	Now notice that, for every $a=1,\dots,k$ and $i=1,\dots,n$, we have
   	\begin{equation}
   		\begin{split}
   			&[df_1\wedge\cdots\wedge \hess f_a(e_i,\,\cdot\,)\wedge \cdots]\mres e_i\\
            &= -\sum_{ 1\le b\le k,\,b\ne a } (-1)^b df_1\wedge \cdots \wedge\hess f_a(e_i,\,\cdot\,) \wedge \cdots\wedge d f_b(e_i)\wedge\cdots\\&\qquad-(-1)^a
   			df_1\wedge \cdots \wedge \hess f_a(e_i,e_i)\wedge \cdots.
   		\end{split}
   	\end{equation}
   	Therefore, using that the Laplacian is the trace of the Hessian on non-collapsed $\RCD$ spaces, we obtain
   	\begin{equation}
   		\sum_{i=1}^n	\nabla_{e_i}\omega\mres e_i=-{\bf C}-{\bf B}-{\bf A}=-\delta\omega,
   	\end{equation}	
    as desired.
   \end{proof}

	\subsubsection{Splitting maps}

Let $(\XX, d,\HH^n)$ be a non-collapsed $\RCD(-(n-1)\delta,n)$ space. Assume that, for some $p\in\XX$,
\begin{equation}\label{eq:ghclose}
	d_{GH}(B_4(p), B_4(0^n))<\delta.
\end{equation}
We recall that, by volume monotonicity, the following holds. For every $\epsilon>0$, if $\delta\le \delta(\epsilon, n)$, each $B_r(x)\subseteq B_3(p)$ satisfies
\begin{equation}\label{stupidmass}
	d_{GH}(B_r(x), B_r(0^n))<\epsilon r\quad\text{and}\quad 1-\epsilon < \frac{\HH^n(B_r(x))}{\omega_n r^n} < 1+\epsilon .
\end{equation}
We refer the reader to \cite[Theorem 4.3]{CJNrect} and \cite[Theorem 1.3]{DPG17}.

	\begin{thm}[Regular Balls]
		\label{prel2}
        Assume \eqref{eq:ghclose}.
		If $\epsilon \le \epsilon(n)$ and $\delta \le \delta(\epsilon,n)$, then the following hold.
		\begin{enumerate}
			\item There exists an $\epsilon$-splitting map \begin{equation}
				u=(u_1,\dots,u_n):B_3(p)\rightarrow\RR^n.
			\end{equation}
			\item For every $x\in B_2(p)$ and $r \in (0,1)$, there exists an invertible matrix $T_{x,r}$, satisfying 
			\begin{equation}\label{T.distortion}
				|T_{x,r}|+|T_{x,r}^{-1}|\le r^{-\epsilon}
			\end{equation}
			and such that $T_{x,r}u:B_r(x)\rightarrow\RR^n$ is an $\epsilon$-splitting map and $(1+\epsilon)$-Lipschitz. In particular, 
			\begin{equation}
				|d u_1\wedge\cdots \wedge d u_n|>0\quad \HH^n\text{-a.e.\ on }B_2(p).
			\end{equation}

			\item	It holds that $u:B_1(p)\rightarrow\RR^n$ is bi-H\"older onto its image: more precisely,
			\begin{equation}\label{canreifeq}
				(1-\epsilon) d(x,y)^{1+\epsilon}\le |u(x)-u(y)|\le (1+\epsilon) d(x,y)\quad\text{for every $x,y\in B_1(p)$}
			\end{equation} 
		and $u(B_1(p))$ is open in $\RR^n$.
			
			\item The set $G$,  defined as in  \cite[Proposition 1.6]{BruPasSem20} by 
			\begin{equation}
				\begin{split}
					G\defeq \bigg\{x\in B_1(p)\,:\,&\sup_{r\in (0,1)}\dashint_{B_r(x)} |\hess u_a|^2\,\dd\HH^n<\sqrt{\epsilon}\text{ for every $a=1,\dots,n$ and } \\
					&\sup_{r\in (0,1)}\dashint_{B_r(x)}|\nabla u_a\,\cdot\,\nabla u_b-\delta_{a,b}|\,\dd\HH^n< \sqrt{\epsilon} \text{ for every $a,b=1,\dots,n$}\bigg\},
				\end{split}
			\end{equation}
			satisfies
			\begin{equation}\label{brvafedcs0}
				\HH^n(B_1(p)\setminus  G)\le C(n) \sqrt{\epsilon} \HH^n(B_1(p))
			\end{equation} and is such that 	
			\begin{equation}\label{cinquepuntotre}
				\big||u(x)-u(y)|-d(x,y)\big|\le \eta d(x,y)\quad\text{for every $x,y\in B_1(p)$ with $ x\in G$},
			\end{equation}
   provided that $\epsilon<\epsilon(\eta,n)$, for $\eta>0$.
		\end{enumerate}
	\end{thm}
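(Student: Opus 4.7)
The strategy is classical: build $u$ by harmonic replacement of a Gromov--Hausdorff approximation, propagate the $\epsilon$-splitting property through all scales via a transformation theorem, and finally extract bi-H\"older control and the good set $G$ from Reifenberg-type and maximal-function arguments. All the ingredients are available in the non-collapsed $\RCD$ setting via \cite{Mondino-Naber14,DPG17,BruPasSem20,CJNrect} and references therein. For item (1), I would fix a $\delta$-GH isometry $\Phi:B_4(0^n)\to B_4(p)$, set $\tilde u_i\defeq\pi_i\circ\Phi^{-1}$ with $\pi_i$ the Euclidean coordinates, and let $u_i$ be a suitable harmonic regularization of $\tilde u_i$ on a slightly smaller ball. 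The Bochner inequality on $\RCD$ spaces and the near-equality in Bishop--Gromov implied by \eqref{eq:ghclose} then force the $L^2$-smallness of $\hess u_i$ and the $L^1$-smallness of $|\nabla u_a\,\cdot\,\nabla u_b-\delta_{ab}|$ on $B_3(p)$; hence $u$ is $\epsilon$-splitting once $\delta\le\delta(\epsilon,n)$.

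For item (2), the key tool is the transformation/propagation theorem for $\epsilon$-splitting maps: at each $x\in B_2(p)$ and each $r\in(0,1)$, one produces an invertible $T_{x,r}$ with $|T_{x,r}|+|T_{x,r}^{-1}|\le r^{-\epsilon}$ such that $T_{x,r}u$ is an $\epsilon$-splitting map on $B_r(x)$. Since an $\epsilon$-splitting map automatically satisfies $|\nabla u_a|\le 1+\epsilon$ on its domain (standard gradient estimates for almost-harmonic coordinates), $T_{x,r}u$ is $(1+\epsilon)$-Lipschitz on $B_r(x)$. A.e.\ nondegeneracy of $du_1\wedge\cdots\wedge du_n$ then follows because, at every Lebesgue point $x\in B_2(p)$ of the Gram matrix of $\nabla u$, this matrix is $\sqrt\epsilon$-close (for $r$ small) to $T_{x,r}^{-\top}T_{x,r}^{-1}$, and therefore invertible.

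Item (3) is the canonical Reifenberg theorem of Cheeger--Colding in its $\RCD$ incarnation (see \cite{CJNrect}): the upper bound in \eqref{canreifeq} is the Lipschitz estimate from Step 2, while the lower bound is obtained by dyadic telescoping over $r_k=2^{-k}d(x,y)$, each step controlled by the distortion bound on $T_{x,r_k}$; openness of $u(B_1(p))$ then follows from \eqref{canreifeq} and invariance of domain. For item (4), Step 1 provides averaged smallness of $|\hess u_a|^2$ and of $|\nabla u_a\,\cdot\,\nabla u_b-\delta_{ab}|$ on $B_3(p)$, and a weak-type maximal-function argument (with Bishop--Gromov controlling the doubling constant) yields the measure estimate \eqref{brvafedcs0}. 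The almost-isometry \eqref{cinquepuntotre} is obtained by expanding
\begin{equation*}
|u(y)-u(x)|^2=\sum_{a=1}^n (u_a(y)-u_a(x))^2
\end{equation*}
and integrating against a unit-speed geodesic from $x\in G$ to $y$: the $G$-bound at $x$ gives, at the scale $r=d(x,y)$, both $\nabla u_a\,\cdot\,\nabla u_b\approx\delta_{ab}$ on average and small Hessian, which together force the integrated gradient contributions to reconstruct $d(x,y)^2$ up to an error $\eta^2 d(x,y)^2$.

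The main technical obstacle is item (2): propagating an $\epsilon$-splitting map through all scales with the polynomial distortion $r^{-\epsilon}$ is the content of the non-collapsed transformation theorem, which rests on a delicate Cheeger--Jiang--Naber iteration of an $L^2$-Hessian bound over dyadic scales. Once this is granted, items (3) and (4), and in particular the strong conclusions \eqref{canreifeq} and \eqref{cinquepuntotre}, follow by (by now) routine arguments in the $\RCD$ toolbox.
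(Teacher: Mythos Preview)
Your outline for (1), the transformation theorem in (2), the bi-H\"older bound in (3), and the measure estimate \eqref{brvafedcs0} in (4) matches the paper, which cites the same sources and treats these as standard.

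For the openness of $u(B_1(p))$ you invoke invariance of domain; this works once $B_1(p)$ is known to be a topological $n$-manifold via classical Reifenberg, but the paper instead gives a self-contained argument: if $z$ were missed by the image, one takes a nearest image point $v(y)$, notes that $T_{y,4s}u(B_s(y))$ then lies in (essentially) a half-space for small $s$ and hence has volume at most $\tfrac34\omega_n s^n$, and contradicts the volume lower bound that item (4) furnishes for the image of the good set $G_\epsilon$. Your route is shorter; the paper's illustrates how (4) feeds back into (3).

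Your sketch for \eqref{cinquepuntotre}, however, has a genuine gap. The condition $x\in G$ gives only \emph{averaged} control of $\nabla u_a\cdot\nabla u_b-\delta_{ab}$ and $|\hess u_a|^2$ over balls centered at $x$, not pointwise control along a specific geodesic from $x$ to $y$; a segment-inequality argument would at best yield the estimate for \emph{most} endpoints, not the given pair. The paper proceeds differently: since $x\in G$, $u$ is a $\sqrt\epsilon$-splitting map on $B_{2d(x,y)}(x)$ (this is the content of \cite[Proposition~1.6]{BruPasSem20}), and one then uses that a $\sqrt\epsilon$-splitting map on a ball satisfying \eqref{stupidmass} is an $\eta$-GH-approximation once $\epsilon<\epsilon(\eta,n)$ (\cite[Remark~3.10]{brueboundary}), which directly gives \eqref{cinquepuntotre}. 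A smaller issue: your Lebesgue-point argument for the a.e.\ nondegeneracy of $du_1\wedge\cdots\wedge du_n$ is incomplete, since $|T_{x,r}|\le r^{-\epsilon}$ may blow up faster than the Lebesgue error decays; the paper instead shows the density of the zero set in any $B_r(x)$ is at most $C(n)\epsilon<1$ uniformly in $r$, contradicting the existence of density points.
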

	\begin{proof}
	    (1) and the first conclusion of (2) are nowadays standard and follow from the existence result for $\delta$-splitting maps and the transformation theorem \cite{Cheeger-Colding96,CJNrect}; see also \cite{BruPasSem20,bru2021constancy} for the proof of these results in the $\RCD$ setting.
	    To prove the second part of item (2), take any $x\in B_2(p)$ and $r\in (0,1)$ and notice that, if $w=w_{x,r}\defeq T_{x,r}u$, then 
		\begin{equation}
			|dw_1\wedge\dots \wedge dw_n|=|\det(T_{x,r})| |du_1\wedge\dots\wedge d u_n|\quad\HH^n\text{-a.e.\ on }B_{r}(x).
		\end{equation}
		In particular, $\HH^n$-a.e.\ on $B_r(x)$ it holds that $|dw_1\wedge\cdots \wedge dw_n|=0$ if and only if $|du_1\wedge\cdots\wedge du_n|=0$. Notice that, for a constant $c(n)$ depending only upon $n$,
		\begin{equation}
			|dw_1\wedge \cdots \wedge dw_n|>1/2\quad\HH^n\text{-a.e.\ on }B_r(x)\cap\{|\nabla w_a\,\cdot\,\nabla w_b-\delta_{a,b}|\le c(n)\ \forall\,   a,b=1,\dots,n\}.
		\end{equation}
		In particular, we deduce that
		\begin{equation}
			\begin{split}
				n^2\epsilon&\ge \sum_{a,b}\dashint_{B_r(x)}|\nabla w_a\,\cdot\,\nabla w_b-\delta_{a,b}|\dd\HH^n\ge c(n)\frac{\HH^n(\{|dw_1\wedge\cdots\wedge dw_n|=0\})}{\HH^n(B_r(x))}
				\\&=c(n)\frac{\HH^n(\{|du_1\wedge\cdots\wedge du_n|=0\})}{\HH^n(B_r(x))},
			\end{split}
		\end{equation}
		which, for $\epsilon< c(n)/n^2$ (which we can always assume), implies a contradiction at density points of $\{|du_1\wedge\cdots\wedge d u_n|=0\}$, showing that the latter has measure zero.

		%\medskip
		
		The first part of (3) is a standard conclusion of the geometric transformation theorem \cite{CJNrect,brueboundary}. To prove that $u(B_1(p))$ is open in $\RR^n$, we fix $x\in B_1(p)$ 
        and let $v\defeq T_{x,r}u$, with $r\defeq 1-d(p,x)>0$.
        It suffices to show that $v(B_r(x))$ includes the ball $B_{r/2}(v(x))$.

        We argue by contradiction and assume that $z\in B_{r/2}(v(x))$ but $z\not\in v(B_r(x))$.
        Since $v$ provides a GH-equivalence between $B_r(x)$ and $B_r(v(x))$ (cf.\ \cite[Remark 3.10]{brueboundary}), assuming $\epsilon<\epsilon(n)$ there exists $y\in B_{3r/4}(x)$
        such that $v(y)$ has minimal distance $0<r'<r/10$ from $z$.
        %and show that $B_r(u(x)) \subseteq u(B_1(p))$ for $r \defeq  (1 - d(x,p))^{1+\epsilon}/9$. By contradiction, if $z\in B_r(u(x))$ but $z\notin u(B_1(p))$, an easy argument building upon \eqref{canreifeq} shows the existence of $y\in B_1(p)$ such that $u(y)$ is a point of $u(B_1(p))$ of minimal distance from $z$. For every $s<|z - u(y)|/9$,
        We now study the $\epsilon$-splitting map $T_{y,4s} u : B_{4s}(y) \to \RR^n$ for $0<s<r'/4$. It turns out that
		\begin{itemize}
			\item[(i)] $T_{y,4s} u (B_{s}(y)) \subseteq B_{(1+\epsilon)s}(T_{y,4s}u(y))$ since $T_{y,4s} u$ is $(1+\epsilon)$-Lipschitz,
			
			\item[(ii)] $\HH^n(T_{y,4s} u (B_{s}(y))) \le \frac{3}{4} \omega_n s^n$ for $s$ small enough.
            %$T_{y,s} u (\bar B_{s}(y))$ is $c(n)$-far from the Euclidean ball $\bar B_1(0^n)$ in the Hausdorff metric, for $s$ small enough and some dimensional constant $c(n)>0$.
		\end{itemize}
		Here, (ii) follows from the fact that the image of $v(B_s(y))$ is  contained in the $C(n)s^2$-neighborhood of a half-space centered at $v(y)$ when $s$ is sufficiently small, as $v(B_s(x))$ does not intersect $B_{|z - v(y)|}(z)$. By the bounds \eqref{T.distortion}, $T_{y,4s} u (B_s(y))$ is contained in the $C(n)s^{2-\epsilon}$-neighborhood a half-space centered at $T_{y,4s}u(y)$. Hence, $T_{y,4s} u (B_s(y))$ cannot intersect a big portion (almost half) of $B_{(1+\epsilon)s}(T_{y,4s}u(y))$, providing the volume bound (ii).
		
		Finally, we show that (ii) contradicts the fact that $T_{y,4s} u : B_s(y) \to \RR^n$ is an $\epsilon$-splitting map,
        and in particular the bound \eqref{cinquepuntotre} proved below, for $\epsilon$ small enough.
        Indeed, let $w_\epsilon:=T_{y,4s} u$ and let $G_\epsilon\subseteq B_s(y)$ be the good set for the map $w_\epsilon$
        (defined as $G$ but in the scale-invariant way). Note that the proof of \eqref{cinquepuntotre} (for any given $\eta$)
        does not depend on \eqref{eq:ghclose}, but just on \eqref{stupidmass}, which trivially still hold when we replace $B_4(p)$
        with $B_{4s}(y)$.        
        Then, by \eqref{stupidmass} and \eqref{brvafedcs0}, we have
        $$\HH^n(w_\epsilon(G_\epsilon))\ge (1-C(n)\sqrt\epsilon)\omega_n s^n.$$
        By \eqref{cinquepuntotre}, the restriction $w_\epsilon:G_\epsilon\to w_\epsilon(G_\epsilon)$ is injective,
        with $(1-\eta)^{-1}$-Lipschitz inverse, giving
        $$\HH^n(G_\epsilon)\ge(1-\eta)^{-n}(1-C(n)\sqrt\epsilon)\omega_n s^n.$$
        This contradicts (ii) as soon as $\epsilon$, and thus $\eta$, are small enough.
        %We identify $\bar B_s(y)$, on which $w_\epsilon$ is defined,
        %with the unit ball $\bar B_1(y_\epsilon)$ in a rescaled RCD space $X_\epsilon$, which we can assume to converge to $\bar B_1(y_0)\subset X_0$ along a subsequence, for a limit space $X_0$. We can also assume that $w_\epsilon\to w_0$ uniformly
        %for a limit map $w_0:\bar B_1(y_0)\to\bar B_1(0^n)$.
        
        %To obtain a contradiction, let $G_\epsilon\subseteq B_1(y_\epsilon)$ be the good set for the map $w_\epsilon$.
        %Then, by volume convergence and \eqref{brvafedcs0}, there exists a subset $G_0\subseteq B_1(y_0)$ of full measure such that
        %\eqref{cinquepuntotre} holds with $\eta=0$, for all $x,y\in G_0$ (and $w_0$ in place of $u$).
        %Thus, $w_0:\bar B_1(y_0)\to\bar B_1(0^n)$ is an isometry onto its image, and in particular
        %the topological boundary of $w_0(B_1(y_0))$ is included in $w_0(B_1(y_0)\setminus A_0)$,
        %where $A_0\subseteq B_1(y_0)$ is the (topological) manifold part
        %(note indeed that $w_0$ must be an open map on $A_0$), giving
        %$$w_0(B_1(y_0))=B_1(0^n)$$
        %as $w_0(B_1(y_0)\setminus A_0)$ cannot disconnect $B_1(0^n)$, given that it has zero $\mathcal{H}^{n-1}$-measure; this contradicts (ii).
        %Another way to conclude is to use the standard fact that $\epsilon$-splitting maps converge to isometries with the Euclidean space as $\epsilon\to0$.
		
		Now we turn to item (4). By \cite[Proposition 1.6]{BruPasSem20}, \eqref{brvafedcs0} follows, as well as the fact that $u:B_{s}(x)\rightarrow\RR^n$ is a $\sqrt{\epsilon}$-splitting map for every $x\in G$ and $s\in (0,1)$.
		
		Now, \eqref{cinquepuntotre}  follows from the same argument of the proof of \cite[Proposition 2.8]{BruPasSem20}, by using \cite[Theorem 3.4 (iii)]{brueboundary} and the argument of \cite{bru2019rectifiability}. We give anyway the details below. 
		
		Fix $x,y$ as in the statement.  Notice that, if $d( x, y)\ge 1/2$, the claim follows from the first part of the theorem, provided that $\epsilon<\epsilon(\eta,n)$, so that we will assume that $d( x, y)\le 1/2$.   		Hence, as $x\in G$, $u:B_{2d ( x, y)}( x)\rightarrow\RR^n$ is an $\sqrt{\epsilon}$-splitting map. By \eqref{stupidmass}, provided that $\epsilon<\epsilon(\eta,n)$, \cite[Remark 3.10]{brueboundary}  implies that  $u:B_{2d ( x, y)}( x)\rightarrow\RR^n$ provides us with a GH-equivalence witnessing the bound
		\begin{equation}
			d_{GH}(B_{2d (x, y)}(x),B_{2d ( x, y)}(0^n) )<\eta d ( x,y),
		\end{equation}
		from which the conclusion follows.
	\end{proof}

\subsection{Volume Form on Regular Balls}\label{volumeregularball}

	We consider an $\RCD(-(n-1)\delta,n)$ space $(\XX, d,\HH^n)$ and a regular ball $B_4(p) \subseteq \XX$, i.e.
	\begin{equation}
	d_{GH}(B_4(p), B_4(0^n))<\delta.
	\end{equation}
	From Theorem \ref{prel2}, if $\delta \le \delta(\epsilon,n)$ we can construct $\epsilon$-splitting maps $u: B_3(p) \to \RR^n$. It is then natural to consider the pullback form
	\begin{equation}
		\nu \defeq  du_1 \wedge \ldots \wedge du_n .
	\end{equation}
	We already know (see Theorem \ref{prel2}) that $|\nu|$ is nonzero $\HH^n$-a.e.\ and $\epsilon$-close to $1$ in an integral sense. The next result provides sharp Sobolev estimates on $|\nu|^{-1}$.

	\begin{prop}\label{integrability}
			Fix $r\in (1,\infty)$ and $q\in (1,2)$. If $\epsilon \le \epsilon(p,q,n)$ and $\delta \le \delta(\epsilon,n)$ then 
			\begin{equation}\label{eq:nu-1}
			|\nu|^{-1} \in L^r(B_{1/4}(p)) \cap W^{1,q}(B_{1/4}(p)) .
			\end{equation}
		    Also, there exists a lower-triangular matrix field $A,A^{-1}\in L^r\cap W^{1,q}(B_{1/4}(p); \RR^{n\times n})$, with $\det(A(x))>0$ for $\HH^n$-a.e. $x\in B_{1/4}(p)$, such that letting
      \begin{equation}\label{bvrascfds}
            e_i\defeq A_{i,j}\nabla u_j
          %(e_1,\dots,e_n) \defeq A(\nabla u_1,\dots,\nabla u_n)
      \end{equation}
      we obtain an orthonormal basis $(e_1,\dots,e_n)$ on $B_{1/4}(p)$.
	\end{prop}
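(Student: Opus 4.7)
The plan is a Gram--Schmidt reduction to integrability of $|\nu|^{-1}$, followed by a quantitative $L^r$ bound driven by the transformation theorem, and a chain-rule argument for the Sobolev part.

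Set $G_{ij}\defeq \nabla u_i\cdot\nabla u_j$, so $\det G=|\nu|^2>0$ $\HH^n$-a.e.\ by Theorem \ref{prel2}(2). Performing Gram--Schmidt on $\nabla u_1,\dots,\nabla u_n$ in this order produces a unique lower-triangular matrix $A$ with positive diagonal satisfying $AGA^T=I$, so that \eqref{bvrascfds} defines an orthonormal basis. The entries of $A$ and $A^{-1}$ are explicit rational functions of the $G_{k\ell}$, with denominators being positive powers of the leading principal minors $\det G_{[k]}$, $k=1,\dots,n$; in particular $\det A=(\det G)^{-1/2}=|\nu|^{-1}$. Thus the $L^r\cap W^{1,q}$ bounds on $A$ and $A^{-1}$ reduce to the same bounds for $|\nu|^{-1}$ and for the reciprocals of the $\det G_{[k]}$, which are controlled identically since each $\det G_{[k]}$ is close to $1$ on the good set from Theorem \ref{prel2}(4).

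The core of the argument is the $L^r$ bound on $|\nu|^{-1}$. On the good set $G$ of Theorem \ref{prel2}(4), the scale-invariant $\sqrt\epsilon$-splitting property at every radius, combined with Lebesgue differentiation, forces $G_{ij}(x)$ to be as close to $\delta_{ij}$ as desired at $\HH^n$-a.e.\ $x\in G$ once $\epsilon$ is small, so $|\nu|^{-1}$ is bounded a.e.\ on $G$, whose complement has measure at most $C\sqrt\epsilon$. On the bad set, the plan is a Vitali/stopping-time covering argument proving
\begin{equation*}
\HH^n(\{x\in B_{1/4}(p):|\nu(x)|<t\})\le C(n,\epsilon)\, t^{\alpha(n,\epsilon)},\quad t\in(0,1),
\end{equation*}
with $\alpha(n,\epsilon)\to\infty$ as $\epsilon\to 0$, which yields $|\nu|^{-1}\in L^r$ via Cavalieri. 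For each bad $x$, letting $r(x)$ be the largest scale at which $T_{x,r(x)}u$ still passes a good-set criterion on $B_{r(x)}(x)$: by \eqref{T.distortion}, $|\det T_{x,r(x)}|\le r(x)^{-n\epsilon}$, while the $\epsilon$-splitting property of $T_{x,r(x)}u$ forces $|dT_{x,r(x)}u_1\wedge\dots\wedge dT_{x,r(x)}u_n|\ge \tfrac12$ on a $(1-C\sqrt\epsilon)$-fraction of $B_{r(x)}(x)$; combining, $|\nu|\ge \tfrac12 r(x)^{n\epsilon}$ on that fraction. Extracting a Vitali subfamily and summing Bishop--Gromov volume estimates converts this per-scale information into the claimed decay exponent, which improves as $\epsilon\to 0$.

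For the Sobolev bound, the entries of $G$ are in $L^\infty$ while $\nabla G_{ij}=\hess u_i(\nabla u_j,\cdot)+\hess u_j(\nabla u_i,\cdot)\in L^2$ by Theorem \ref{prel2}(4). The chain rule applied to $|\nu|^{-1}=(\det G)^{-1/2}$ on $\{\det G>0\}$ yields
\begin{equation*}
|\nabla|\nu|^{-1}|\le C_n|\nu|^{-3}\sum_i|\hess u_i|\quad\HH^n\text{-a.e.}
\end{equation*}
For $q\in(1,2)$, choosing $p\in(2,\infty)$ with $1/q=1/2+1/p$, Hölder gives $\|\nabla|\nu|^{-1}\|_{L^q(B_{1/4}(p))}\le C\||\nu|^{-3}\|_{L^p}\|\hess u\|_{L^2(B_{1/4}(p))}$, which is finite once $r\ge 3p$ in the $L^r$ step. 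The identical argument handles each entry of $A$ and $A^{-1}$ by differentiating the Gram--Schmidt formulas. The chain rule on the non-Lipschitz function $(\det G)^{-1/2}$ is legitimated via the truncation $(\det G+\delta)^{-1/2}$ followed by a $\delta\to 0^+$ limit, using the uniform $L^q$ bound as compactness input. The main obstacle is the $L^r$ decay: mere a.e.\ positivity of $|\nu|$ does not yield integrability, and one must exploit the scale-invariant splitting quality from Theorem \ref{prel2}(4) jointly with the distortion bound \eqref{T.distortion}, with the dependence $\epsilon\le\epsilon(r,n)$ being forced by the chosen decay exponent.
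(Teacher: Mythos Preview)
Your Gram--Schmidt reduction and the chain-rule argument for $W^{1,q}$ are correct and match the paper. The gap is in the $L^r$ step.

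First, the stopping scale $r(x)$ is not well defined as you describe it: by Theorem~\ref{prel2}(2), $T_{x,r}u$ is an $\epsilon$-splitting map for \emph{every} $r\in(0,1)$, so there is no ``largest scale at which $T_{x,r(x)}u$ still passes''. What you have at hand is the uniform statement that, for every center $x$ and every scale $r$, $|\nu|\ge \tfrac12 r^{n\epsilon}$ on a $(1-C\sqrt\epsilon)$-fraction of $B_r(x)$. But this, together with Vitali and Bishop--Gromov alone, only yields $\HH^n(\{|\nu|<t\})\le C\sqrt\epsilon\,\HH^n(B_{1/4}(p))$, a bound \emph{independent of $t$}; it gives no decay, hence no $L^r$ integrability beyond the trivial one. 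The $C\sqrt\epsilon$-fraction that escapes at each scale is not controlled further by your argument.

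What is missing is twofold. (a) The efficient covering of the bad set does not come from volume (Bishop--Gromov) but from the Hessian energy: one defines the bad set via a weighted maximal function $\sup_r r^\alpha\dashint_{B_r}|\hess w|^2$ for the transformed map $w=T_{x,2s}u$, and the $L^2$ Hessian bound then forces a \emph{content} estimate $\sum_i r_i^{\,n-\alpha}\le \tfrac12 s^{\,n-\alpha}$ on the Vitali cover of the bad set, not merely a volume bound. (b) This single step only gives $|\nu|\ge c\,s^{n\epsilon}$ on $B_s(x)\setminus\bigcup_i B_i$ and says nothing on the $B_i$. The paper then \emph{iterates}: inside each $2B_i$ one transforms again and repeats, so that at generation $k$ the content halves, $\sum_i r_{k,i}^{\,n-\alpha}\le 2^{-k}$, while on the $k$-th good layer $|\nu|\ge c\,r_{k,i}^{\,n\epsilon}$. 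Summing $\int_{\mathcal B_k\setminus\mathcal B_{k+1}}|\nu|^{-r}\le C\sum_i r_{k,i}^{\,n-\epsilon rn}\le C\,2^{-k}$ (once $\epsilon rn<\alpha$) gives the $L^r$ bound. The dependence $\epsilon\le\epsilon(r,n)$ enters exactly here, and it is this iteration---not a single stopping time---that produces the arbitrarily large decay exponent you assert.
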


   \begin{rem}
    The sharpness of Proposition \ref{integrability} can be verified on the two-dimensional ice-cream cone $C(S^1)$, where $\operatorname{diam}(S^1) = \frac{\pi}{1+\epsilon}$. If $\epsilon > 0$ is sufficiently small, then $u_1 = r^{1+\epsilon} \cos((1+\epsilon)\theta)$ and $u_2 = r^{1+\epsilon} \sin((1+\epsilon)\theta)$ constitute a $\delta$-splitting map, and $|\nu| = |du_1 \wedge du_2| =(1+\epsilon)^2 r^{1+2\epsilon}|dr\wedge d\theta|= (1+\epsilon)^2 r^{2\epsilon}$. Clearly, \eqref{eq:nu-1} is satisfied only for $q < \frac{2}{1+2\epsilon}$.
\end{rem}

    In view of Proposition \ref{integrability}, it is natural to define a {\it local volume form} by normalizing $\nu$:
    \begin{equation}
    	\omega_{B_{1/4}(p)} \defeq \frac{d u_1 \wedge \ldots \wedge d u_n}{|d u_1 \wedge \ldots \wedge d u_n|}=\frac{\nu}{|\nu|} ,
    	\quad \text{on $B_{1/4}(p)$} .
    \end{equation}

    \begin{prop}\label{sdfdascasdca}
       If $\delta<\delta(n)$, the following holds.
        \begin{enumerate}
            \item For every  $\eta\in\mathrm{TestForms}_{n}(\XX)$, $\omega_{B_{1/4}(p)}\,\cdot\,\eta\in W^{1,2}(B_{1/4}(p))$ with 				\begin{equation}\label{bssgfdvcac}
					\nabla (\omega_{B_{1/4}(p)}\,\cdot\,\eta)=\omega_{B_{1/4}(p)}\,\cdot\,\nabla \eta\quad\HH^n\text{-a.e.\ on $B_{1/4}(p)$}.
				\end{equation}

            \item For every  $\eta\in\mathrm{TestForms}_{n-1}(\XX)$ with $\supp(\eta)\subseteq B_{1/4}(p)$, \begin{equation}
    			\int_\XX \omega_{B_{1/4}(p)}\,\cdot\,d\eta\,\dd\HH^n=0.
    		\end{equation}
        \end{enumerate}
    \end{prop}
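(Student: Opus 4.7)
The plan is to use the orthonormal basis $e_i = A_{ij}\nabla u_j$ from Proposition \ref{integrability} together with the positivity of $\det(A)$ and the relation $du_i = (A^{-1})_{ji}e^j$ to identify
\[
\omega_{B_{1/4}(p)} = \frac{\nu}{|\nu|} = e^1 \wedge \cdots \wedge e^n \qquad \HH^n\text{-a.e.\ on } \{|\nu|>0\},
\]
which exhibits $\omega_{B_{1/4}(p)}$ as a unit section of the rank-one bundle $\Lambda^n T^*\XX$. The conceptual core of both parts of the proposition is that any such section is \emph{parallel}: since $\nabla_v\omega$ lies in the one-dimensional fibre spanned by $\omega$, it has the form $c_v\omega$, and $c_v$ must vanish because $\nabla_v|\omega|^2 = 0$. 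Equivalently, Kato's inequality is saturated for $\nu$, that is, $|\nabla\nu|^2 = |\nabla|\nu||^2$ a.e.\ on $\{|\nu|>0\}$, as one verifies directly from the line-bundle structure.

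For part (1), given $\eta = f_0\,df_1\wedge\cdots\wedge df_n \in \mathrm{TestForms}_n(\XX)$, first observe that $\omega_{B_{1/4}(p)}\cdot\eta = f_0\det(\nabla f_i \cdot e_j) \in L^\infty(B_{1/4}(p))$ with $|\omega_{B_{1/4}(p)}\cdot\eta| \le |\eta|$. To deal with the low regularity of $|\nu|^{-1}$ (only in $W^{1,q}$ for $q<2$), I will regularize via $\omega^\varepsilon \defeq \nu/\sqrt{|\nu|^2+\varepsilon^2}$, which is a bounded Sobolev $n$-form with $|\omega^\varepsilon|\le 1$. Apply the classical Leibniz rule to $\omega^\varepsilon\cdot\eta \in W^{1,2}(B_{1/4}(p))$, then use the Kato-type equality above to show that the error term $(\nabla\omega^\varepsilon)\cdot\eta$ vanishes in $L^2$ as $\varepsilon\to 0$. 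Passing to the limit yields $\omega_{B_{1/4}(p)}\cdot\eta \in W^{1,2}(B_{1/4}(p))$ together with the identity \eqref{bssgfdvcac}, and in particular the bound $|\nabla(\omega_{B_{1/4}(p)}\cdot\eta)| = |\omega_{B_{1/4}(p)}\cdot\nabla\eta| \le |\nabla\eta| \in L^2$.

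For part (2), given $\eta = g_0\,dg_1\wedge\cdots\wedge dg_{n-1}$ with compact support in $B_{1/4}(p)$, expand $\omega_{B_{1/4}(p)}\cdot d\eta = \det(\nabla g_{i-1}\cdot e_j)_{i,j=1}^n$ and Laplace-expand along the first row to write
\[
\omega_{B_{1/4}(p)}\cdot d\eta = \nabla g_0 \cdot V, \qquad V \defeq \sum_{j=1}^n (-1)^{j+1} M_j\, e_j,
\]
where $M_j$ is the $(n-1)\times(n-1)$ minor obtained by deleting the first row and the $j$-th column. By construction $\iota_V\omega_{B_{1/4}(p)} = dg_1\wedge\cdots\wedge dg_{n-1}$, which is closed; Cartan's formula $L_V\omega_{B_{1/4}(p)} = d\iota_V\omega_{B_{1/4}(p)} = 0$ together with the volume-form identity $L_V\omega_{B_{1/4}(p)} = \operatorname{div}(V)\,\omega_{B_{1/4}(p)}$ (valid a.e.\ thanks to the parallelism established in part (1)) forces $\operatorname{div}(V) = 0$ a.e. Integration by parts against the compactly supported test function $g_0$ then concludes
\[
\int_\XX \omega_{B_{1/4}(p)}\cdot d\eta\,\dd\HH^n = \int_\XX \nabla g_0 \cdot V\,\dd\HH^n = -\int_\XX g_0\operatorname{div}(V)\,\dd\HH^n = 0.
\]

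The main obstacle is making the formal parallelism $\nabla\omega_{B_{1/4}(p)} = 0$ rigorous given the merely $L^\infty$ regularity of $\omega_{B_{1/4}(p)}$ and the restricted Sobolev bound on $|\nu|^{-1}$, which prevents a direct application of the chain rule to $\nu/|\nu|$. The regularization $\omega^\varepsilon$ sidesteps the division by a vanishing quantity, while the Kato equality $|\nabla\nu|^2 = |\nabla|\nu||^2$ is the quantitative encoding of $\nu$ being a section of a line bundle and is precisely what makes the error terms vanish in the limit $\varepsilon\to 0$.
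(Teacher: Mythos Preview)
Your approach to part (1) is sound and close in spirit to the paper's: both exploit the rank-one structure of $\Lambda^n T^*\XX$ to deduce that $\nu/|\nu|$ is parallel. The paper regularizes differently, using directly that $|\nu|^{-1}\in L^4\cap W^{1,4/3}$ (Proposition~\ref{integrability}) to place $\omega_{B_{1/4}(p)}\cdot\eta$ in $W^{1,4/3}$ via the Leibniz rule, and then observes that the $\nabla\omega$ contribution vanishes pointwise by testing the rule against $\eta=\omega_{B_{1/4}(p)}$ itself (this is exactly your Kato identity). One caution: your claim that $(\nabla\omega^\varepsilon)\cdot\eta\to 0$ in $L^2$ is not clear, since the natural dominating function $|\nabla|\nu||\,/\,|\nu|$ lies only in $L^q$ for $q<2$. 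You do get $L^q$-convergence for any $q<2$, which suffices to put $\omega_{B_{1/4}(p)}\cdot\eta$ in $W^{1,q}$ with gradient $\omega_{B_{1/4}(p)}\cdot\nabla\eta\in L^2$ and then upgrade to $W^{1,2}$ (as the paper does, invoking \cite[Theorem~3.4]{GigliHan14}).

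Part (2), however, has a genuine gap. You rely on the chain $L_V\omega=(\operatorname{div}V)\,\omega$ and $L_V\omega=d\iota_V\omega=0$, but neither identity is available here: $\omega_{B_{1/4}(p)}$ is only $L^\infty$ (not in the domain of $d$ as a form), and $V$ has coefficients built from the matrix $A\in W^{1,q}$ with $q<2$, so no Lie-derivative calculus for this pair has been established. Worse, if $\operatorname{div}V$ is taken distributionally, then $\operatorname{div}V=0$ \emph{is} the statement $\int\nabla\phi\cdot V=0$ for all test $\phi$, which by your own algebraic identity equals $\int\omega_{B_{1/4}(p)}\cdot(d\phi\wedge dg_1\wedge\cdots\wedge dg_{n-1})$ --- precisely what you are trying to prove, so the argument is circular. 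The paper avoids this entirely: it uses the explicit codifferential formula $\delta\xi=-\sum_i\nabla_{e_i}\xi\mres e_i$ for test forms $\xi$ (Lemma~\ref{vefdaa}), approximates $|\nu|^{-1}$ by test functions $f_k$, integrates by parts $\int f_k\nu\cdot d\eta=-\int\sum_i\nabla_{e_i}(f_k\nu)\mres e_i\cdot\eta$, and lets $k\to\infty$ using the parallelism identity $\nabla|\nu|^{-1}\,\nu+|\nu|^{-1}\nabla\nu=0$ from part (1) to see the right-hand side vanishes.
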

   The next result shows the opposite implication: any form $ \omega \in L^\infty(\Lambda^n T^* X)$ with constant modulus and weak Sobolev regularity coincides with $\omega_{B_{1/4}(p)}$, up to scaling by a constant.

    \begin{prop}\label{bgfvsdca}
    	Let $\omega\in L^\infty(\Lambda^n T^{*}\XX)$ be such that one of the following holds:
    	\begin{enumerate}
    		\item $| \omega(x)|=1$ for $\HH^n$-a.e. $x\in B_{1/4}(p)$ and $ \omega\,\cdot\,\eta\in W^{1,2}(B_{1/4}(p))$  for every $\eta\in\mathrm{TestForms}_{n}(\XX)$ with $\supp\eta\subseteq B_{1/4}(x)$;
    		\item $\omega\ne 0$ on $B_{1/4}(p)$ and, for every  $\eta\in\mathrm{TestForms}_{n-1}(\XX)$ with $\supp\eta\subseteq B_{1/4}(p)$, 	\begin{equation}
    			\int_\XX \omega\,\cdot\,d\eta\,\dd\HH^n=0.
    		\end{equation}
    	\end{enumerate}
    	Then, if $\delta<\delta(n)$, there exists a constant $c\in \RR\setminus\{0\}$ such that $ \omega = c\omega_{B_{1/4}(p)}$, $\HH^n$-a.e.\ on $B_{1/4}(p)$.
    	% \begin{equation}
    	% 	\widetilde \omega = c\frac{du_1\wedge\cdots\wedge du_n}{|du_1\wedge\cdots\wedge du_n|}\qquad\HH^n\text{-a.e.\ on }B_{1/4}(p)\, .
    	% \end{equation}
    \end{prop}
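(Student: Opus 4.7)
I plan to prove both cases by showing that $f\defeq\omega\cdot\omega_{B_{1/4}(p)}$ is constant on the connected ball $B_{1/4}(p)$. Using Proposition~\ref{integrability}, I work with the local orthonormal frame $(e_1,\dots,e_n)$ produced from the splitting map $u$; since $\Lambda^n T^*\XX$ has pointwise rank $1$, we have $\omega_{B_{1/4}(p)}=e^1\wedge\cdots\wedge e^n$ and $\omega=f\,\omega_{B_{1/4}(p)}$ with $f\in L^\infty(B_{1/4}(p))$ and $|f|=|\omega|$ a.e., so $|f|\ge c>0$ under hypothesis (1$'$) and $f\not\equiv 0$ under hypothesis (2). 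By Lemma~\ref{divconst}, constancy of $f$ on the (connected) ball $B_{1/4}(p)$ follows once we verify
\begin{equation*}
\int_\XX f\,\mathrm{div}(Z)\,\dd\HH^n=0\qquad\text{for every }Z\in\mathrm{TestV}(\XX)\text{ with }\supp Z\Subset B_{1/4}(p),
\end{equation*}
and this, together with $f\not\equiv 0$, forces the nonzero constant $c$ in the conclusion.

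The core mechanism is the Cartan-type identity for top forms, $d(\iota_Z\omega_{B_{1/4}(p)})=\mathrm{div}(Z)\,\omega_{B_{1/4}(p)}$, which formally yields $\omega\cdot d(\iota_Z\omega_{B_{1/4}(p)})=f\,\mathrm{div}(Z)$. To make it rigorous in the $\RCD$ setting, I would expand $\iota_Z\omega_{B_{1/4}(p)}=\sum_i(-1)^{i-1}(Z\cdot e_i)\,e^1\wedge\cdots\wedge\widehat{e^i}\wedge\cdots\wedge e^n$, rewrite $e^j=\sum_k A_{jk}\,du_k$ via Proposition~\ref{integrability}, and approximate the $L^\infty\cap W^{1,q}$ coefficients (products of $Z\cdot e_i$ with entries of $A$) by test functions. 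This produces a sequence of genuine test $(n-1)$-forms $\eta^k$ for which $\int\omega\cdot d\eta^k\,\dd\HH^n=0$ by hypothesis~(2), and which converges to $\int f\,\mathrm{div}(Z)\,\dd\HH^n$ thanks to $\omega\in L^\infty$ combined with the strong $L^r$ bounds on $A,A^{-1}$; Lemma~\ref{vefdaa} provides the weak expression for $d$ on test $(n-1)$-forms needed to identify the limit. For case~(1$'$), I plan to reduce to~(2): decomposing $\omega\cdot\eta=f(\omega_{B_{1/4}(p)}\cdot\eta)$ and applying the gradient formula~\eqref{bssgfdvcac} to $\omega_{B_{1/4}(p)}\cdot\eta$, one integrates by parts using $|f|\ge c$ to obtain $\int\omega\cdot d\eta'\,\dd\HH^n=0$ for every $\eta'\in\mathrm{TestForms}_{n-1}(\XX)$ with compact support in $B_{1/4}(p)$, whence case~(2) applies.

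The principal obstacle is the approximation step in the core mechanism: the frame $(e_i)$ has only $W^{1,q}$ regularity for $q<2$, so direct $L^2$-density arguments for approximating $\iota_Z\omega_{B_{1/4}(p)}$ by test forms fail. The saving feature is that $\omega\in L^\infty$ and $A,A^{-1}\in L^r$ for arbitrarily large $r$, so only convergence with low integrability is needed, within reach via a heat-semigroup mollification combined with a Meyers--Serrin style density of $\mathrm{TestF}(\XX)$ in $W^{1,q}$. A cleaner alternative is to bypass the interior product entirely and test directly against the explicit family $h\,du_1\wedge\cdots\wedge\widehat{du_i}\wedge\cdots\wedge du_n$ with $h\in\mathrm{TestF}(\XX)$ supported in $B_{1/4}(p)$: their exterior derivatives equal $(-1)^{i-1}dh\wedge du_1\wedge\cdots\wedge\widehat{du_i}\wedge\cdots\wedge du_n$, which pair with $\omega$ to produce $f$ times a cofactor of $A^{-1}$ contracted with $\nabla h$, and varying $h$ and $i$ recovers every test divergence on $B_{1/4}(p)$ after inverting the (invertible) matrix $A$.
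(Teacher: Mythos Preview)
Your plan for case~(2) matches the paper's argument: write $\omega=f\,\omega_{B_{1/4}(p)}$, approximate the frame coefficients $A_{ab}$ of Proposition~\ref{integrability} by test functions to build $e_a^k$, test hypothesis~(2) against $\varphi\,e_2^k\wedge\cdots\wedge e_n^k$ (and its permutations), integrate by parts via Proposition~\ref{sdfdascasdca}(2), and pass to the limit using the $L^r\cap W^{1,q}$ bounds on $A,A^{-1}$ to obtain $\int f\,\mathrm{div}(Z)\,\dd\HH^n=0$ for every $Z\in\mathrm{TestV}(\XX)$ with $\supp Z\Subset B$, whence Lemma~\ref{divconst} gives $f$ constant on each $B\Subset B_{1/4}(p)$. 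Your ``cleaner alternative'' using $h\,du_1\wedge\cdots\wedge\widehat{du_i}\wedge\cdots\wedge du_n$ is a cosmetic variant of the same computation.

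For case~(1)/(1$'$), however, your reduction to~(2) has a gap. You claim that from $\omega\cdot\eta=f\,(\omega_{B_{1/4}(p)}\cdot\eta)\in W^{1,2}$ together with formula~\eqref{bssgfdvcac} one can ``integrate by parts using $|f|\ge c$'' to deduce $\int\omega\cdot d\eta'\,\dd\HH^n=0$ for test $(n-1)$-forms $\eta'$; but any such integration by parts would require Sobolev regularity of $f$, which is exactly what is unknown at this stage. Hypothesis~(1) pairs $\omega$ only against $n$-forms, and there is no mechanism to pass from $fg\in W^{1,2}$ (with $g=\omega_{B_{1/4}(p)}\cdot\eta$) to weak closedness of $\omega$ without differentiating $f$. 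The paper handles case~(1) by a much shorter direct route that avoids case~(2) entirely: taking $\eta$ to be a cutoff times $\nu=du_1\wedge\cdots\wedge du_n$, hypothesis~(1) gives $\omega\cdot\nu=f\,|\nu|\in L^\infty\cap W^{1,2}(B)$; since $|\nu|^{-1}\in L^4\cap W^{1,3/2}(B)$ by Proposition~\ref{integrability}, the product rule yields $f=(\omega\cdot\nu)\,|\nu|^{-1}\in W^{1,4/3}(B)$, and then $|f|=1$ (resp.\ $|f|\ge c$ for~(1$'$)) forces $f$ locally constant. This is both simpler and free of the circularity in your proposed reduction.
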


	\begin{proof}[Proof of Proposition \ref{integrability}]
 We begin by proving the regularity for $|\nu|^{-1}$.
		Notice first that if we show that $|\nu|^{-1} \in L^r(B_{1/4}(p))$ for all $r\in(1,\infty)$ then $ |\nu|^{-1} \in W^{1,q}(B_{1/4}(p))$ for all $q\in(1,2)$. Indeed, by direct computation, $|\nu|\in W^{1,2}$ with $|\nabla|\nu||\le C(\delta,n)\sum_a|\hess u_a|\in L^2$ and also, for any $\sigma\in (0,1)$, $\frac{1}{|\nu|+\sigma}\in W^{1,q}$ with 
		\begin{equation}
		\bigg|	\nabla \frac{1}{|\nu|+\sigma}\bigg|\le  \frac{|\nabla|\nu||}{(|\nu|+\sigma)^2}.
		\end{equation}
		Hence, the claim follows by H\"older's inequality,  given the integrability proved for  $|\nu|^{-1}$.
		
        Let $\alpha\in (0,2)$ be fixed (actually we are going to need only the case $\alpha=1$).
        We split the argument into two steps, the second of which involves iterating the first one.
        %and let $\eta\in (0,1/2)$ to be fixed later depending on $p,q,n,\alpha$.
		
		\textbf{Step 1}. Let $B_{2s}(x)\subseteq B_{1/2}(p)$. We claim that there exists a countable collection $(B_i)_i$ with $B_i=B_{r_i}(x_i)\subseteq B_{2r_i}(x_i)\subseteq B_{2s}(x)$  for every $i$, such that:
		\begin{itemize}
			\item[(i)] $|\nu|\ge c(n) s^{\epsilon n}\ \HH^n$-a.e.\ on $B_s(x)\setminus \bigcup_i B_i$;
			\item[(ii)]  $\sum_i r_i^{n-\alpha}\le \frac12 s^{n-\alpha}$.
		\end{itemize}
		
				Fix a ball $B_{2s}(x)$ as in the statement of the claim and consider $w=w_{x,2s}\defeq T_{x,2s}u$, which is an $\epsilon$-splitting map on $B_{2s}(x)$, by Theorem \ref{prel2}. Notice that $\HH^n$-a.e.\ we have
		 \begin{equation}
		\begin{split}	|dw_1\wedge\cdots\wedge dw_n| & =|\det(T_{x,2s})du_1\wedge\cdots\wedge d u_n|
        \\& \le C(n) |T_{x,2s}|^n|du_1\wedge\dots\wedge d u_n|
        \\&
        \le C(n) {(2s)}^{-\epsilon n} |\nu|,
        \end{split}
		\end{equation}
		where the last inequality is due to \eqref{T.distortion}. Hence, instead of item (i), it suffices to show
		\begin{itemize}
			\item [(i')] $|dw_1\wedge\cdots\wedge dw_n| \ge 1/2 \ \HH^n$-a.e.\ on $B_s(x)\setminus \bigcup_i B_i$.
		\end{itemize}
		Now notice that items (i') and (ii) are scale-invariant, so that we proceed in the proof considering the rescaled space $(\XX, s^{-1} d,\HH^n)$, which is still a non-collapsed $\RCD(-(n-1)\delta ,n)$ space. Hence, we will work on $B_1(x)$ in the rescaled space, and we have at our disposal an $\epsilon$-splitting map that we still call  $w:B_2(x)\rightarrow\RR^n$.

		Set for brevity $|\hess w|^2\defeq \sum_{a} |\hess w_a|^2\in L^1(B_{1/4}(p))$ and define, for $z\in B_1(x)$,
\begin{equation}
			Mw(z)\defeq \sup_{r\in (0,1)} r^\alpha \dashint_{B_r(z)}|\hess w|^2\,\dd\HH^n.
\end{equation}
		 It is enough to follow the  proof \cite[Proposition 3.11]{bru2019rectifiability}, so that we just sketch the details. Set $G\defeq \{z\in B_1(x)\,:\,Mw(z)\le \sqrt{\epsilon} \}$.
		 
	Taking $z\in G$, as in the proof of \cite[Proposition 3.11]{bru2019rectifiability} we have that 
	\begin{equation}
		\dashint_{B_r(z)}|\nabla w_a\,\cdot\,\nabla w_b-\delta_{i,j}|\le C(n,\alpha)\epsilon^{1/4}\quad\text{for every }r\in (0,1)\text{ and }a,b=1,\dots, n.
	\end{equation}
	Hence,  with a Lebesgue point argument, we see that 
\begin{equation}
		|\nabla w_a\,\cdot\,\nabla w_b-\delta_{a,b}|\le C(n,\alpha)\epsilon^{1/4}\quad\text{for every $a,b=1,\dots,n$}
\end{equation}
$\HH^n$-a.e.\ on $G$
so that, if $\epsilon$ is small enough, we obtain
\begin{equation}
	|dw_1\wedge\cdots\wedge dw_n|\ge 1/2\quad\HH^n\text{-a.e.\ on $G$},
\end{equation}
by simple linear algebra. 

Now it remains to cover $B_1(x)\setminus G$ by suitable balls. Take any $z\in B_1(x)\setminus G$, so that there exists $ r_z\in (0,1)$ with
\begin{equation}\label{vewafdsc}
\sqrt{\epsilon}\le  r_z^\alpha\dashint_{B_{r_z}(z)}|\hess w|^2\le C(n) r_z^{\alpha-n}\int_{B_2(x)}|\hess w|^2\le C(n) r_z^{\alpha-n}  \epsilon,
\end{equation}
where we used also \eqref{stupidmass} and the fact that $w:B_2(x)\rightarrow\RR^n$ is an $\epsilon$-splitting map. Notice in passing that this implies $r_z\le 1/10$, provided that $\epsilon$ is small enough.
By using Vitali’s covering lemma, we extract a sub-cover from $(B_{r_z}(z))_{z\in B_1(x)\setminus G}$, say $(\widehat B_i)_i$, such that $\bigcup_i 5\widehat B_i\supseteq  B_1(x)\setminus G$ and $(\widehat B_i)_i$ are pairwise disjoint, for $\widehat B_i\defeq B_{r_{z_i}}(z_i)$. Then, setting $r_i:=r_{z_i}$, by \eqref{vewafdsc} and \eqref{stupidmass} we have
\begin{equation}
\begin{split}
		\sum_i r_i^{n-\alpha}&\le C(n)\sum_i r_i^{-\alpha}{\HH^n(B_{r_i}(z_i))}\le C(n)\epsilon^{-1/2}\sum_i \int_{B_{r_i}(x_i)}|\hess w|^2\,\dd\HH^n\\
	&\le  C(n)\epsilon^{-1/2}\int_{B_{2}(x)}|\hess w|^2\,\dd\HH^n\le C(n) \sqrt{\epsilon}\le \frac12,
\end{split}
\end{equation}
provided that $\epsilon$ is small enough. Hence, it is now enough to define $B_i\defeq 5\widehat B_i$.

\textbf{Step 2}. 
	We iterate \textbf{Step 1}. We start from $B_{2s}(x)=B_{1/2}(p)$ and we obtain, thanks to \textbf{Step 1}, a collection of balls $(B_{i}^1=B_{r_{1,i}}(x_{1,i}))_{i}$. Then, at step $k$, for every ball $B_{l}^k$, we apply \textbf{Step 1} to $2B_{l}^k$ and we obtain a collection of balls  $(B_{l,m}^{k+1})_m$, and we set $(B_i^{k+1}=B_{r_{k+1,i}}(x_{k+1,i}))_i$ to be the union of all the balls $(B_{l,m}^{k+1})_{l,m}$.  Now, by \textbf{Step 1}, for every $k\in\NN$,
		\begin{equation}\label{vfeads}
			\sum_{i} r_{k,i}^{n-\alpha}\le \frac{1}{2} \sum_{i} r_{k-1,i}^{n-\alpha}\le\dots\le \frac{1}{2^{k}} (1/4)^{n-\alpha}.
		\end{equation}
	Hence, if we set ${\mathcal{B}}_k\defeq\bigcup_{i} B_{k,i}$, we see that $\HH^n(\mathcal{B}_k)\rightarrow 0$ as $k\rightarrow\infty$, so that we see that it is enough to find a bound for $\int_{B_{1/4}(p)\setminus\mathcal{B}_k} |\nu|^{-p}\dd\HH^n$ that is independent of $k$. To this aim, notice that
	\begin{equation}
		\int_{B_{1/4}(p)\setminus\mathcal{B}_k} |\nu|^{-r}\,\dd\HH^n\le\sum_{j= 0}^{k-1} \int_{\mathcal{B}_j\setminus\mathcal{B}_{j+1}} |\nu|^{-r}\,\dd\HH^n,
	\end{equation}
	where we set  $\mathcal{B}_0\defeq B_{1/4}(p)$. We conclude by computing, for $j\ge 0$, exploiting \textbf{Step 1} and \eqref{stupidmass}, that
\begin{equation}
		\int_{\mathcal{B}_j\setminus\mathcal{B}_{j+1}}|\nu|^{-r}\le \sum_{i}\int_{B^j_i\setminus \mathcal{B}_{j+1}}|\nu|^{-r}
		\le  \sum_{i}C(n) r_{j,i}^n C(n)^p r_{j,i}^{-\epsilon rn}\le C(n,r)\sum_{i} r_{j,i}^{n-\alpha}\le \frac{C(n,r)}{2^j},
\end{equation}
	provided that $\epsilon<\epsilon(r,n,\alpha)$. Summing over $j$, we obtain the bound in $L^r$.

        %\bigskip

		The proof of the second part follows from the classical Gram--Schmidt procedure. However, we have to take care of the regularity of the coefficients. Let $r',q'\in (1,\infty)$ to be determined later, depending upon $r,q,n$. 
		Define $\nu_a=d u_1\wedge\cdots\wedge d u_a$ for $a=1,\dots,n$. Notice that $|\nu_a|\ge  c(n)|\nu_n|$, so that, by what we proved above for $|\nu|^{-1}$, we have that
		\begin{equation}\label{befvdacas}
			|\nu_a|^{-1}\in L^{r'}\cap W^{1,q'}(B_{1/4}(p))\quad\text{for every }a=1,\dots,n,
		\end{equation}  
		provided that $\delta<\delta(\epsilon,r',q',n)$ ($r',q'$ are yet to be fixed, but depending only upon $r,q,n$).

		We start by defining on $B_{1/4}(p)$
		\begin{equation}
			\tilde e_1\defeq d u_1,\quad m_1\defeq |\tilde e_1|,\quad e_1=\frac{\tilde e_1}{m_1},
		\end{equation}
		and, for $a=2,\dots,n$, we define inductively
		\begin{equation}
			\tilde e_a\defeq d u_a-\sum_{b=1}^{a-1}(d u_a\,\cdot\,e_b )e_b,\quad m_a\defeq |\tilde e_a|,\quad e_a=\frac{\tilde e_a}{m_a}.
		\end{equation}
		Notice that this definition is well-posed, as $|du_1\wedge\cdots\wedge d u_n|>0\ \HH^n$-a.e.\ on $B_{1/4}(p)$ (which holds true by Theorem \ref{prel2}) implies that $m_a>0\ \HH^n$-a.e.\ on $B_{1/4}(p)$ for every $a=1,\dots,n$. 
		As the definition of $e_a$ involves terms $d u_b$ only with $b=1,\dots,a$,  we naturally have the lower-triangular matrix field $A:B_{1/4}(p)\rightarrow\RR^{n\times n}$. Also, $A$ has positive entries on the diagonal. Moreover, by construction, $(e_1,\dots,e_n)$ are orthonormal.
		We now need to show integrability and Sobolev regularity for $A$.
		
		A simple computation, exploiting the fact that  $(e_1,\dots,e_n)$ are orthonormal, yields
		\begin{equation}
		\begin{split}
				m_a&=\bigg| d u_a-\sum_{b=1}^{a-1} (du_a\,\cdot\,e_b )e_b\bigg|=\bigg| e_1\wedge\cdots\wedge e_{a-1}\wedge \bigg( d u_a-\sum_{b=1}^{a-1}(d u_a\,\cdot\,e_b) e_b\Big)\bigg|=|e_1\wedge\cdots \wedge e_{a-1}\wedge d u_a|
			\\&=\frac{1}{m_1\cdots m_{a-1}}|\tilde e_1\wedge \cdots\wedge \tilde e_{a-1}\wedge d u_a|=\frac{1}{m_1\cdots m_{a-1}}|d u_1\wedge\cdots\wedge d u_a|,
		\end{split}
		\end{equation}
		so that, for every $a=2,\dots,n$, we have
		\begin{equation}\label{bgrsdf}
			m_a=\frac{|\nu_a|}{|\nu_{a-1}|}\quad\HH^n\text{-a.e.\ on }B_{1/4}(p).
		\end{equation}

		Now notice that 
		\begin{equation}
\begin{split}
				e_a&=\frac{1}{m_a}\bigg(  d u_a-\sum_{b=1}^{a-1}\Big( d u_a\,\cdot\, \sum_{h=1}^b A_{b,h}\,d u_h\Big)\Big( \sum_{k=1}^b A_{b,k}\,d u_k\Big) \bigg)
				\\&=\frac{1}{m_a}d u_a-\frac{1}{m_a}\sum_{b=1}^{a-1}\sum_{h=1}^b\sum_{k=1}^b A_{b,h}A_{b,k} (d u_a\,\cdot\,d u_h) \, d u_k,
\end{split}
		\end{equation}
		so that, for $k=1,\dots,a-1$,
		\begin{equation}\label{brvfdaa}
			A_{a,k}=-\frac{1}{m_a}\sum_{b=k}^{a-1}\sum_{h=1}^b A_{b,h}A_{b,k} (d u_a\,\cdot\,d u_h) \quad\HH^n\text{-a.e.\ on }B_{1/4}(p).
		\end{equation}
		Now, notice that the regularity for $A_{a,a}=\frac{1}{m_a}$ follows from \eqref{befvdacas} and \eqref{bgrsdf} (for $a\ge 2$).
		Hence, we can easily prove that $A_{a,b}\in L^{r}\cap W^{1,q}(B_{1/4}(p))^{n\times n}$ for every $b=1,\dots,a-1$ by induction or recursion on $a$, given \eqref{brvfdaa}, exploiting \eqref{bgrsdf}, and keeping track of the integrability of $A_{a,b}$. Here the choice of $r'$ and $q'$, depending upon $r,q,n$, enters into play.
			\end{proof}

\begin{proof}[Proof of Proposition \ref{sdfdascasdca}]
    We first show (1). Taking $\eta\in \mathrm{TestForms}_n(\XX)$, recall by Proposition \ref{integrability} that $|\nu|^{-1}\in L^4\cap W^{1,4/3}(B_{1/4}(p))$, if $\epsilon<\epsilon(n)$ and $\delta<\delta(\epsilon,n)$. In particular $\omega_{B_{1/4}(p)} \,\cdot\,\eta\in L^4\cap W^{1,4/3}(B_{1/4}(p))$, with 
\begin{equation}\label{vrfdacaa}
\nabla (\omega_{B_{1/4}(p)}\,\cdot\,\eta)
=\bigg(\nabla \frac{1}{|\nu|} \nu 
 + \frac{1}{|\nu|}\nabla \nu \bigg)\,\cdot\,\eta
 + \omega_{B_{1/4}(p)} \,\cdot\,\nabla \eta\quad\HH^n\text{-a.e.\ on }B_{1/4}(p).
\end{equation}
Now, notice that a similar equation as above holds also for $\eta = \omega_{B_{1/4}(p)}$, in which case, by $|\omega_{B_{1/4}(p)}|=1$, we see that 
		\begin{equation}
			0=\nabla \bigg(\frac{1}{|\nu|} \nu\,\cdot\, \frac{1}{|\nu|}\nu\bigg)
            =2\bigg(\nabla \frac{1}{|\nu|}\nu + \frac{1}{\nu}\nabla \nu \bigg)\,\cdot\,\frac{\nu}{|\nu|}\quad\HH^n\text{-a.e.\ on }B_{1/4}(p).
		\end{equation}
Since $\omega_{B_{1/4}(p)}=\frac{\nu}{|\nu|}$ is a nowhere vanishing top form on $B_{1/4}(p)$, we deduce that 
		\begin{equation}\label{bsvfdaa}
			\nabla \frac{1}{|v|}v+\frac{1}{|v|}\nabla v=0 \quad\HH^n\text{-a.e.\ on }B_{1/4}(p),
		\end{equation}
		so that, by \eqref{vrfdacaa},  we have proved \eqref{bssgfdvcac}, and hence $\omega_{B_{1/4}(p)}\,\cdot\,\eta\in W^{1,2}(B_{1/4}(p))$, by \cite[Theorem 3.4]{GigliHan14}.
		%Now, the global $\omega_X\,\cdot\,\eta\in W^{1,2}(\XX)$ follows from Lemma \ref{sobolevext}.
		
		We now prove (2). Fix $\eta\in \mathrm{TestForms}_{n-1}(\XX)$ with  $\supp(\eta)\subseteq B_{1/4}(p)$, say $\supp(\eta)\subseteq B\Subset B_{1/4}(p)$.  %Recall that, for every $p\in  B_R(o)\cap  A$, we have $r_p$ such that $\omega=\frac{v}{|v|}\ \HH^n$-a.e.\ on $B_{r_p}(x)$, and $\frac{1}{|v|}\in L^5\cap W^{1,3/2}(B_{r_p/4}(x))$. 
 % For $\sigma>0$, consider $\varphi_\sigma$ as in the proof of Lemma \ref{sobolevext}, with $B_R(o)\setminus A\subseteq B_R(o)$. Notice that still $\varphi_\sigma\eta\in D(d)$, with $d(\varphi_\sigma \eta)=d \varphi_\sigma \wedge\eta + \varphi_\sigma d\eta$, hence, by \eqref{vfsdaca}, we can just prove 
		%\begin{equation}
		%	\int_\XX \omega_X\,\cdot\, d(\varphi_\sigma\eta)\dd\HH^n=0\qquad\text{for every }\sigma\in (0,1).
	%	\end{equation}
%  Hence, by a partition of unity argument, we see that there is no loss of generality in assuming that $\supp\eta\subseteq B_{r_p/8}(p)$, for some $B_{r_p}(p)$ as above, hence, we have $\omega_X = \frac{\nu}{|\nu|}\ \HH^n$-a.e.\ on $B_{r_p}(p)$, with the same notation as above. 
Let $f_k\in\mathrm{TestF}(\XX)$ with $f_k\rightarrow \frac{1}{|\nu|}$ in $L^4\cap W^{1,4/3}(B)$, and $\supp(f_k)\subseteq B_{1/4}(p)$. By Lemma \ref{vefdaa},  for every $k$ it holds
\begin{equation}
\begin{split}
\int_\XX f_k \nu \,\cdot\,d\eta\,\dd\HH^n
&= -\int_\XX \sum_{i=1}^n \nabla_{e_i} (f_k \nu )\mres e_i\,\cdot\,\eta\,\dd\HH^n\\
&=-\int_\XX \sum_{i=1}^n\big(( \nabla f_k\,\cdot\,e_i) \nu + f_k \nabla_{e_i} \nu\big)\mres e_i\,\cdot\,\eta\,\dd\HH^n.
\end{split}
\end{equation}
If we let $k\rightarrow \infty$, we have
\begin{equation}
\int_\XX \omega_{B_{1/4}(p)}\,\cdot\,d\eta\,\dd\HH^n=-\int_\XX \sum_{i=1}^n\bigg(\bigg(\nabla\frac{1}{|\nu|}\,\cdot\,e_i\bigg)  \nu + \frac{1}{| \nu |} \nabla_{e_i}\nu\bigg)\mres e_i\,\cdot\,\eta\,\dd\HH^n=0,
\end{equation}
having used \eqref{bsvfdaa} for the last equality.
\end{proof}

\begin{proof}[Proof of Proposition \ref{bgfvsdca}]
      Remember our notation $\nu = du_1 \wedge \ldots \wedge du_n$ and $\omega_{B_{1/4}(p)} = \frac{\nu}{|\nu|}$.
   We can then find $f\in L^\infty(B_{1/4}(p))$ such that 
   	\begin{equation}
   		\omega= f \omega_{B_{1/4}(p)}\quad\HH^n\text{-a.e.\ on }B_{1/4}(p).
   	\end{equation}
   	Fix a ball $B\Subset B_{1/4}(p)$. It is enough to prove that $f= \omega\,\cdot\, \omega_{B_{1/4}(p)}$ is constant on $B$.
    
%\bigskip\\
\textbf{Step 1}. Assume first that (1) holds.  By assumption, $\omega\,\cdot\, \nu \in L^\infty\cap W^{1,2}(B)$. Moreover, $\frac{1}{|\nu|}\in L^4\cap W^{1,3/2}(B_{1/4}(p))$, provided that $\delta<\delta(n)$, by Proposition \ref{integrability}.  Thus, we get $f\in W^{1,4/3}(B)$,  but also $|f|=1$ $\HH^n$-a.e.\ on $B_{1/4}(p)$ (as $|\omega|=1$ $\HH^n$-a.e.\ here), so that $f$ must be constant on $B$.

   	%\bigskip\\
    \textbf{Step 2}. Assume instead that (2) holds. Let  $\varphi\in\mathrm{TestF}(\XX)$ with $\supp\varphi\Subset B$. Define also $(e_1,\dots,e_n)$ as in Proposition \ref{integrability}, for $p=6n$, $q=3/2$, assuming that $\epsilon<\epsilon(p,q,n)$ and $\delta<\delta(\epsilon,n)$. If $A$ denotes the matrix as in Proposition \ref{integrability}, let $A^{k}\in \mathrm{TestF}(\XX)^{n\times n}$ be  matrix fields such that, for every $a,b=1,\dots,n$, we have $A^k_{a,b}\rightarrow A_{a,b} $ in $L^{6n}\cap W^{1,3/2}(B)$, as $k\rightarrow\infty$. As in \eqref{bvrascfds}, this defines naturally $e_1^k,\dots,e_n^k$,
    which we view as 1-forms.
   	Hence, by assumption,
   	\begin{equation}\label{vfsdaa}
   		\int_\XX f e_1\wedge\cdots\wedge e_n\,\cdot\, d(\varphi e_2^k\wedge\cdots \wedge e_n^k)\,\dd\HH^n=	\int_\XX \omega\,\cdot\, d(\varphi e_2^k\wedge\cdots \wedge e_n^k)\,\dd\HH^n=0.
   	\end{equation}
   	Notice that for every $a=1,\dots,n$, $de_a^k= d\sum_{b=1}^n A_{a,b}^k\,d u_b=\sum_{b=1}^n dA_{a,b}^k\wedge du_b$, and this means that
   	\begin{equation}\label{vfsdaa1}
   		\|d(\varphi e_2^k\wedge\cdots \wedge e_n^k)\|_{L^{6/5}(B)}\le M\quad\text{for every $a=1,\dots,n$ and $k\ge 1$},
   	\end{equation}
   	where $M$ depends on the norms $\|A_{a,b}\|_{L^{6n}\cap W^{1,3/2}(B_{1/4}(p))}$, $\|\varphi\|_{L^\infty}$, $\|\nabla\varphi\|_{L^\infty}$, and $n$.
   	
   	Thus, take $(f_h)_h\subseteq\mathrm{TestF}(\XX)$ with $\supp f_h\subseteq B_{1/4}(p)$ and $f_h\rightarrow f$ in $L^{6}(B)$, with moreover $(f_h)_h\subseteq L^\infty(B_{1/4}(p))$ bounded.
   	By \eqref{vfsdaa} and \eqref{vfsdaa1}, we have that 
   	\begin{equation}
   		\bigg|	\int_\XX f_he_1\wedge\cdots\wedge e_n\,\cdot\, d(\varphi e_2^k\wedge\cdots \wedge e_n^k)\,\dd\HH^n\bigg|\le \| f-f_h\|_{L^6(B)} M\quad\text{for every }k,h.
   	\end{equation}
   	We then recall that $e_1\wedge\cdots\wedge e_n=\omega_{B_{1/4}(p)}$ on $B_{1/4}(p)$ and use Lemma \ref{sdfdascasdca} to integrate by parts  to deduce, letting  $k\rightarrow\infty$ and using the convergence $e^k_a\rightarrow e_a$ in $L^{6n}(B)$, that
   	\begin{equation}
   		\bigg|	\int_\XX\varphi e_1\wedge\cdots\wedge e_n\,\cdot\, d f_h\wedge  e_2\wedge\cdots \wedge e_n\,\dd\HH^n\bigg|\le \| f-f_h\|_{L^6(B)} M\quad\text{for every }h.
   	\end{equation}
   	This implies $\int_\XX \varphi d f_h\,\cdot\, e_1\,\dd\HH^n\to 0$. Repeating the same argument for different indices, we obtain
   	\begin{equation}
   		\lim_{h\to\infty} \int_\XX \varphi\, d f_h\,\cdot\, e_a\,\dd\HH^n=0\quad\text{for every }a=1,\dots,n.
   	\end{equation}
   	Hence, approximating again $e_a$ with $e_a^k$, we see that, for every $a=1,\dots,n$,
   	\begin{equation}
   		\begin{split}
   			0&=\lim_{h\to\infty}\lim_{k\to\infty} \int_\XX \varphi\, d f_h\,\cdot\, e_a^k\,\dd\HH^n\\
   			&=-\lim_{h\to\infty}\lim_{k\to\infty} \int_\XX f_h \bigg(d\varphi\,\cdot\, e_a^k+\varphi\sum_{b=1}^n d A^k_{a,b}\,\cdot\, d u_b\bigg)\,\dd\HH^n\\
   			&=-\int_\XX f\bigg(d\varphi\,\cdot\, e_a+\varphi\sum_{b=1}^n d A_{a,b}\,\cdot\, d u_b\bigg)\,\dd\HH^n,
   		\end{split}
   	\end{equation}
    where we used the fact that $u$ is harmonic.
   	Here we are considering $dA_{a,b}\in L^{0}(T\XX)$, with $|dA_{a,b}|\in L^{3/2}(B)$, limit of $dA_{a,b}^k$, and similarly we will do for differentials of other $W^{1,3/2}(B)$ functions, which is meaningful thanks to \cite[Theorem 3.4]{GigliHan14}.
   	By approximation, we see that the above continues to hold even for $\varphi\in W^{1,3/2}\cap L^\infty(B)$.
   	
   	Now, take  $Z\in \mathrm{TestV}(\XX)$ with $\supp(Z)\Subset B$. The above holds for $\varphi_a\defeq e_a(Z)$, so that
   	\begin{equation}\label{vfadasdc}
   		\int_\XX f\sum_{a=1}^n\bigg(d\varphi_a\,\cdot\, e_a+\varphi_a\sum_{b=1}^n d A_{a,b}\,\cdot\, d u_b\bigg)\,\dd\HH^n=0.
   	\end{equation}
   	We claim that the term in brackets  in \eqref{vfadasdc} equals $\mathrm{div}Z$, in which case we can conclude by Lemma~\ref{divconst}.
   	In order to verify this, take  $\psi\in \mathrm{TestF}(\XX)$ with $\supp\psi\Subset B$, and take $(\varphi_a^h)_h\subseteq \mathrm{TestF}(\XX)$ with $\supp\varphi_a^h\Subset B$ such that $\varphi_a^h\rightarrow\varphi_a$ in $W^{1,3/2}(B)$, with moreover $(\varphi_a^h)_h\subseteq L^\infty(B)$ bounded.
   	We compute
   	\begin{equation}
   		\begin{split}
   			-\int_\XX \psi\mathrm{div}(Z)\,\dd\HH^n&=\int_\XX d \psi\,\cdot\, \sum_{a=1}^n \varphi_a e_a\,\dd\HH^n=\lim_{h\to\infty}\lim_{k\to\infty} \int_\XX d \psi\,\cdot\, \sum_{a=1}^n \varphi_a^h e_a^k\,\dd\HH^n\\
   			&=-\lim_{h\to\infty}\lim_{k\to\infty} \sum_{a=1}^n\int_\XX \psi \bigg(d\varphi_a^h\,\cdot\, e_a^k+\varphi^h_a\sum_{b=1}^n d A_{a,b}^k\,\cdot\, d u_b\bigg)\,\dd\HH^n\\
   			&=- \sum_{a=1}^n\int_\XX \psi \bigg(d\varphi_a\,\cdot\, e_a+\varphi_a\sum_{b=1}^n d A_{a,b}\,\cdot\, d u_b\bigg)\,\dd\HH^n,
   		\end{split}
   	\end{equation}
    where we used again the harmonicity of $u$.
   	This is what we needed to verify.				
   \end{proof}

\subsection{Compatibility of Local Volume Forms}\label{sectcompat}

	Let $\epsilon,\delta\in(0,1)$. Let $(\XX, d,\HH^n)$ be a non-collapsed $\RCD(-(n-1)\delta,n)$ space. We consider two $\delta$-regular balls $B_{4r}(p), B_{4r'}(p') \subseteq \XX$, i.e.
	\begin{equation}
		d_{GH}(B_{4r}(p), B_{4r}(0^n))<\delta r,
		\quad
		d_{GH}(B_{4r'}(p'), B_{4r'}(0^n))<\delta r',
		\quad 0^n \in \RR^n.
	\end{equation}
    By Theorem \ref{prel2}, if $\delta \le \delta(\epsilon,n)$, we can construct $\epsilon$-splitting maps $u:B_{3r}(p)\rightarrow\RR^n$ and $u':B_{3r'}(q)\rightarrow\RR^n$. The next result shows that the local volume forms are compatible on $A\defeq B_{r}(p)\cap B_{r'}(p')$.

	        \begin{prop}\label{brgvfsd}
				For $\epsilon <\epsilon(n)$ and $\delta \le \delta(\epsilon,n)$, then
                we can find $f\in L^0(A)$ such that
		\begin{equation}\label{vfdcsas}
						du_{1}\wedge\cdots \wedge du_{n}=f\,du'_{1}\wedge\cdots \wedge du'_{n}\quad\HH^n\text{-a.e.\ on $A$}.
				\end{equation}
					Moreover,  if $\deg(u'\circ u^{-1})=1$ on a connected component $u(A')$ of $u(A)$, then $f>0$ $\HH^n$-a.e.\ on $A'$, whereas, if $\deg(u'\circ u^{-1})=-1$ here, then $f<0$ $\HH^n$-a.e.\ on $A'$.
            	\end{prop}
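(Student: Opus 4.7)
My plan is to construct $f$ via a pointwise rank-one argument on the top exterior module, then show $\mathrm{sgn}(f)$ is locally constant on $A$ by applying Proposition \ref{bgfvsdca} on small regular subballs, and finally identify its constant value on each connected component $A'\subseteq A$ with $\deg(u'\circ u^{-1})|_{u(A')}$ via a pointwise computation at a point of the common good set. Existence of $f$ is immediate: on a non-collapsed $\RCD(-(n-1)\delta,n)$ space the module $L^0(\Lambda^n T^{*}\XX)$ has pointwise rank one, and both $\nu\defeq du_1\wedge\cdots\wedge du_n$ and $\nu'\defeq du'_1\wedge\cdots\wedge du'_n$ are $\HH^n$-a.e.\ nonzero on $A$ by Theorem \ref{prel2}(2), so there is a unique $\HH^n$-a.e.\ nonzero $f\in L^0(A)$ with $\nu=f\nu'$.

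For local constancy of $\mathrm{sgn}(f)$, cover $A$ by balls $B_\rho(q)\subseteq A$ so small that a scale-invariant version of Proposition \ref{bgfvsdca} applies to $B_\rho(q)$ for both unit $n$-forms $\omega_{B_r(p)}=\nu/|\nu|$ and $\omega_{B_{r'}(p')}=\nu'/|\nu'|$; hypothesis (1) of that proposition restricts trivially from $B_r(p),B_{r'}(p')$ to the subball thanks to Proposition \ref{sdfdascasdca}(1). On such a $B_\rho(q)$ the two unit-modulus forms coincide up to a scalar of modulus one, necessarily $\pm 1$, which gives $f=\varepsilon_{q,\rho}(|\nu|/|\nu'|)$ with $\varepsilon_{q,\rho}\in\{\pm 1\}$; hence $\mathrm{sgn}(f)$ is locally constant on $A$ and therefore constant on each connected component $A'$. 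To pin down the sign on $A'$ and match it with $\deg(u'\circ u^{-1})$, pick $x\in A'\cap G(u)\cap G(u')$, where $G(u),G(u')$ are the good sets of Theorem \ref{prel2}(4) (which, for $\epsilon$ small, have nearly full measure in $A'$). By part (4) of that theorem, for any $\eta>0$ I may arrange (by choosing $\epsilon<\epsilon(\eta,n)$) that both $u$ and $u'$ are $\eta s$-GH approximations on $B_s(x)$ for every small $s$; composing, $\phi\defeq u'\circ u^{-1}$ is an $O(\eta s)$-approximate isometry between open subsets of $\RR^n$ near $u(x)$, and is therefore homotopic, on such a ball, to an affine map $y\mapsto u'(x)+L(y-u(x))$ with $L\in O(n)$, so its (global) degree on $u(A')$ equals $\det L\in\{\pm 1\}$. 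On the other hand, the Gram--Schmidt construction of Proposition \ref{integrability} applied to $(\nabla u_i)$ and $(\nabla u'_i)$ yields an orthonormal frame $(e_i)$ of $T_x\XX$ and matrices $O,O'$ (close to $O(n)$) with $\nabla u_i(x)=\sum_j O_{ij}e_j$ and $\nabla u'_i(x)=\sum_j O'_{ij}e_j$; expanding in the dual frame gives $\nu(x)=\det(O)\,e_1^{*}\wedge\cdots\wedge e_n^{*}$ and $\nu'(x)=\det(O')\,e_1^{*}\wedge\cdots\wedge e_n^{*}$, so that $\mathrm{sgn}(f(x))=\mathrm{sgn}(\det O\cdot\det O')=\mathrm{sgn}(\det(O'O^{-1}))=\mathrm{sgn}(\det L)$, matching the degree.

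The main obstacle is this final step: $u$ and $u'$ lack genuine pointwise differentials, so the matrices $O,O'$ and the linear part $L$ of the affine approximation of $\phi$ have to be extracted from Lebesgue-point data together with the Hessian and quasi-orthogonality densities controlling $G(u)\cap G(u')$. The cleanest way is to pass to the rescaled space $(\XX,s^{-1}d)$, where the rescalings $u^s$ and $u'^s$ converge along $s\to 0$ to genuine affine isometries of $\RR^n$ with linear parts $O$ and $O'$; then $\phi^s$ converges to the affine isometry $y\mapsto Ly$ with $L=O'O^{-1}$, and topological stability of the degree under uniform convergence of bi-Lipschitz approximations closes the argument.
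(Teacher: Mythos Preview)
Your strategy matches the paper's: first show that $\mathrm{sgn}(f)$ is locally constant on $A$, then identify this constant on each component $A'$ with $\deg(u'\circ u^{-1})$ via a blow-up at a point of the common good set. Your invocation of Proposition \ref{bgfvsdca} for local constancy is a repackaging of the paper's direct computation (the paper writes $g=\omega_{B_r(p)}\cdot\omega_{B_{r'}(p')}=(\nu\cdot\nu')\,|\nu|^{-1}|\nu'|^{-1}$ and uses Proposition \ref{integrability} to get $g\in W^{1,q}$ for some $q>1$, whence $|g|=1$ forces $g$ locally constant); either route is fine and they amount to the same Sobolev estimate.

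There is, however, a genuine gap in your degree step. You write that $G(u)\cap G(u')$ has ``nearly full measure in $A'$'', but \eqref{brvafedcs0} only gives $\HH^n(B_{r}(p)\setminus G(u))\le C(n)\sqrt\epsilon\,\HH^n(B_{r}(p))$ and the analogue for $G(u')$ in $B_{r'}(p')$. If the two scales are very different (say $r\ll r'$), then $A'$ can easily have measure smaller than $C(n)\sqrt\epsilon\,\HH^n(B_{r'}(p'))$, and $G(u')\cap A'$ may well be empty; you cannot then choose $x\in A'\cap G(u)\cap G(u')$ at all. The paper addresses this in its Step~2: pick any $\hat x\in A'$, take $s>0$ small, and use Theorem \ref{prel2}(2) to obtain matrices $\widehat T,\widehat T'$ so that $\hat u\defeq\widehat T u$ and $\hat u'\defeq\widehat T' u'$ are $\epsilon$-splitting on $B_{2s}(\hat x)\subseteq A'$. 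Now the good sets $\widehat G,\widehat G'$ for $\hat u,\hat u'$ have nearly full measure in $B_s(\hat x)$, so $\widehat G\cap\widehat G'\ne\emptyset$ and your blow-up argument applies to $\hat u,\hat u'$. One then transfers the conclusion back via
\[
\hat\nu=(\det\widehat T)\,\nu,\qquad \hat\nu'=(\det\widehat T')\,\nu',\qquad
\deg(u'\circ u^{-1})=\mathrm{sgn}(\det\widehat T')\,\deg(\hat u'\circ\hat u^{-1})\,\mathrm{sgn}(\det\widehat T),
\]
which shows $\mathrm{sgn}(f)=\deg(u'\circ u^{-1})$ on $A'$. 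With this reduction inserted, your argument is complete and essentially coincides with the paper's proof; the identification $d(u'\circ u^{-1})(\bar x)=L'\circ L^{-1}$ that you sketch via rescaling is exactly the content of the paper's Steps 3--5.
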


				\begin{proof}%[Proof of Proposition \ref{brgvfsd}]
First, recall that, by Theorem \ref{prel2}, $|du_1\wedge\cdots\wedge du_n|>0$ $\HH^n$-a.e.\ on $ B_{r}(p)$ and $|du'_1\wedge\cdots\wedge du'_n|>0$ $\HH^n$-a.e.\ on $ B_{r'}(p')$. 
%\medskip\\

\textbf{Step  1}.  Set $\nu \defeq du_1\wedge\cdots \wedge du_n$ and $\omega_{B_r(p)} \defeq \frac{\nu}{|\nu|}$, and similarly for $\nu'$ and $\omega_{B_{r'}(p')}$. By what we just remarked, we have an $f$ as in \eqref{vfdcsas}. %Now we want to prove that either $f>0\ \HH^n$-a.e.\ or  $f<0\ \HH^n$-a.e.
Defining $g$ by $\omega_{B_r(p)} =g \omega_{B_{r'}(p')}$ $\HH^n$-a.e.\ on $A$, we now claim that $g$ is locally constant,
implying that $f$ has locally constant sign.
%We can then prove the same claim for $g$, defined by $\omega_{B_r(p)} =g \omega_{B_{r'}(p')}\ \HH^n$-a.e.\ on $B_{r}(p)\cap B_{r'}(p')$.
Notice that, as $|g|=1\ \HH^n$-a.e., it is enough to prove that $g\in W^{1,q}(A)$, for some $q>1$,
as this gives $\nabla g=0$ $\HH^n$-a.e.\ on $\{g=1\}$ and on $\{g=-1\}$. We have
\begin{equation}
g
=\omega_{B_r(p)}\,\cdot\, \omega_{B_{r'}(p')}
={\nu\,\cdot\,\nu'}\frac{1}{|\nu|}\frac{1}{|\nu'|}\quad\HH^n\text{-a.e.\ on $A$}.
\end{equation}
Since $\nu\,\cdot\,\nu'\in W^{1,2}\cap L^\infty$ and $\frac{1}{|\nu|},\frac{1}{|\nu'|}\in W^{1,3/2}\cap L^{5}$  by Proposition \ref{integrability}, then $g\in W^{1,q}$ for some $q>1$, provided that $\epsilon<\epsilon(n)$ and $\delta<\delta(\epsilon,n)$.
				
Up to changing the sign of $u'_1$, we will assume, in the following, that $f>0$ $\HH^n$-a.e.\ on a connected component $A'\subseteq A$.

%\medskip\\
\textbf{Step 2}. Let $G,G'$ be the sets associated with $u,u'$, as in Theorem \ref{prel2}. We want to show that we can assume with no loss of generality that $\HH^n(G\cap G')>0$. 

Take any point $\hat x\in A'$. By Theorem \ref{prel2}, if $s$ is small enough, we have two invertible matrices $\widehat T,\widehat T'$ such that $\hat u\defeq \widehat T u,\hat u'\defeq \widehat T'u':B_{2s}(\hat x)\rightarrow\RR^n$ are $\epsilon$-splitting maps. If we define the sets $\widehat G,\widehat G'$ as in Theorem \ref{prel2}, we have $\HH^n(B_s(\hat x)\setminus \widehat G) + \HH^n(B_s(\hat x)\setminus \widehat G')\le C(n) \sqrt{\epsilon}\HH^n(B_s(\hat x))$. Thus, $\HH^n(\widehat G\cap \widehat G')>0$ if $\epsilon<\epsilon(n)$. Assuming that we have proved the result for $\hat u,\hat u'$, we claim that this implies the conclusion of the proof. Indeed,  set $\hat\nu \defeq d\hat u_1\wedge\cdots\wedge d\hat u_n =(\det\widehat T)\nu$ and define similarly $\hat \nu'$, so that 
\begin{equation}
\hat \nu= \frac{\det \widehat T}{\det \widehat T'}f\hat \nu'\quad\HH^n\text{-a.e.\ on }B_s(\hat x).
\end{equation}
Hence, by the result for $\hat u,\hat u'$ and the fact that $f>0$, we have that 
\begin{equation}
\deg(\hat u'\circ\hat u^{-1})=\sign\bigg(\frac{\det \widehat T}{\det \widehat T'}\bigg)=\sign(\det \widehat T')\sign(\det \widehat T),
\end{equation}
so that 
\begin{equation}
		\deg( u'\circ u^{-1})=\deg (\widehat T'\circ \hat u'\circ \hat u^{-1}\circ \widehat T^{-1})=\sign(\det \widehat T')\deg(\hat u'\circ \hat u^{-1})\sign(\det \widehat T)=1.
\end{equation}
Hence, in what follows, we replace $u,u'$ with $\hat u,\hat u'$, but we go back to the original notation $u,u'$.

%\medskip\\
\textbf{Step 3}. 
First, notice that $u'\circ u^{-1}$ is $\LL^n$-differentiable on $u(G\cap G')$. Indeed, by  \eqref{cinquepuntotre}, $u:G\rightarrow\RR^n$ is bi-Lipschitz onto its image, and the same holds for $u'$. Hence, setting $F$ to be any Lipschitz extension of $u'\circ u^{-1}:u(G\cap G')\rightarrow \RR^n$,  then $F$ is differentiable $\LL^n$-a.e.\ on $u(G\cap G')$. Take $\bar x$ a density point for $u(G\cap G')$ at which $F$ is differentiable, so that
\begin{equation}
\lim_{u(G\cap G')\ni x\rightarrow \bar x}\frac{|u'\circ u^{-1}(x)-u'\circ u^{-1}(\bar x)-\nabla F(\bar x)\,\cdot\,( x-\bar x)|}{|\bar x-x|}=0.
\end{equation}
For $\eta\in (0,1)$, if $s>0$ is small enough, for every $x\in B_s(\bar x)$ we can find $x'\in B_s(\bar x)\cap u(G\cap G')$ with $|x-x'|\le\eta |x-\bar x|$, by density reasons. We estimate, by \eqref{canreifeq} and \eqref{cinquepuntotre},
				\begin{equation}
\begin{split}
						|u'\circ u^{-1}(x)-u'\circ u^{-1}( x')-\nabla F(\bar x)\,\cdot\,( x-x')|&\le 2|u^{-1}(x)-u^{-1}(x')|+|\nabla F(\bar x)| |x-x'|\\&\le (4+|\nabla F(\bar x)| )|x-x'|\le (4+|\nabla F(\bar x)| )\eta |x-\bar x|.
\end{split}
				\end{equation}
				Hence,
				\begin{equation}
						\frac{|u'\circ u^{-1}(x)-u'\circ u^{-1}(\bar x)-\nabla F(\bar x)\,\cdot\,(\bar x-x)|}{| x-\bar x|} \le (4+|\nabla F(\bar x)|)\eta+o(|x'-\bar x|),
				\end{equation}
				whence the claim, as $\eta$ was arbitrary.
                
				%\medskip\\
                \textbf{Step 4}. Now we make some standard reductions, which amount to negligible modifications of $G,G'$. First, we can assume that $|\nabla u_a\,\cdot\,\nabla u_b-\delta_{a,b}|<\sqrt{\epsilon}\ \HH^n$-a.e.\ on $G$, for every $a,b=1,\dots,n$, and similarly for $u'$. This follows from the definition of the sets $G,G'$, taking Lebesgue points. Moreover, we can also assume that every point of $G\cap G'$ is a Lebesgue point for $\nabla u_a\,\cdot\,\nabla u_b$, $\nabla u_a\,\cdot\,\nabla u_b'$, and $\nabla u_a'\,\cdot\,\nabla u_b'$, for every $a,b=1,\dots,n$. Also, we can assume that $G\cap G'\subseteq\mathcal{R}$. Fix then $\bar x\in G\cap G'$ a density point, such that $u^{-1}$ is differentiable at $u(x)$,  which is a density point for $u(G\cap G')$. Up to translations in $\RR^n$, we assume that $u(\bar x)=u'(\bar x)=0^n$.
				
				Notice also that, by Theorem \ref{prel2}, $u'\circ u^{-1}$ is an homeomorphism onto its open image. Hence, by the differentiability property at $0$, it is enough to show
				\begin{equation}\label{claim}
					\det(d (u'\circ u^{-1})(0))>0.
				\end{equation}
				
				Take a sequence of radii $(r_k)_k$ with $r_k\rightarrow 0$ and, up to subsequences, assume that $(\XX,r_k^{-1}d,\HH^n,\bar x)\rightarrow (\RR^n, d_e,\LL^n,0)$ in the pmGH sense. Fix a realization $(\ZZ,d_\ZZ)$ of such convergence and coordinates on $\RR^n$. This is possible as  $\bar x\in\mathcal{R}$.
				Up to taking a further subsequence, we can assume that, for every $a=1,\dots,n$, $u_{a,k}\defeq r_k^{-1}u_a\rightarrow L_a$ locally uniformly for some $L_a:\RR^n\rightarrow \RR$ Lipschitz and harmonic, for the realization as above, with $|\nabla u_{k,a}|^2\rightarrow |\nabla L_a|^2$ locally strongly in $L^1$.  This comes from  the convergence results of \cite{Ambrosio2017local}: see e.g.\ \cite[Section 1.2.3]{bru2019rectifiability}.
                
                As $\bar x$ is a Lebesgue point for $|\nabla u_a|^2$,  we can use the results of \cite{AH16, Ambrosio2017local} (see in particular  \cite[Proposition 1.27]{bru2019rectifiability}) to see that $|\nabla L_a|$ is $\LL^n$-a.e.\ constant on $\RR^n$. Finally, by the Bochner inequality (in the form of, e.g., \cite[(1.22)]{bru2019rectifiability}), we can actually prove that $(L_1,\dots,L_n)$ are linear functions, which we write, with a slight abuse, as $x\mapsto L x$. Of course, we assume that the analogous result holds for $u'$, so that we obtain a linear function $L'=(L'_1,\dots,L_n'):\RR^n\rightarrow\RR^n$. Again by the convergence $|\nabla u_{k,a}|^2\rightarrow | L_a|^2$, and by polarization, taking the Lebesgue representatives, we see that, for every $a,b=1,\dots,n$, $ L_a\,\cdot\, L_b=\nabla u_a\,\cdot\,\nabla u_b (\bar x)$,  $ L_a\,\cdot\, L_b'=\nabla u_a\,\cdot\,\nabla u_b' (\bar x)$.
                %Hence, if $\epsilon$ is small enough, depending upon $n$, we have that $|L-\mathrm{Id}|, |L'-\mathrm{Id}|<c_n$,  where $c_n$ is yet to be fixed, depending only upon $n$. %Moreover, $L_a\,\cdot\, L_b'=\nabla u_a\,\cdot\,\nabla u_b'(\bar x)$.
				
				By \eqref{vfdcsas} and simple linear algebra, we see that $\det(\nabla u_a\,\cdot\,\nabla u_b'(\bar x))_{a,b}>0$, and we read this as $\det(L\circ (L')^T)=\det(L_a\,\cdot\, L_b')_{a,b}>0$. This means that $\det L$ and $\det L'$ have the same sign, so that we immediately obtain 
				\begin{equation}\label{venjoa}
					\sign\big(\det(L'\circ L^{-1})\big)=1.
				\end{equation}
				%\medskip\\
                
                \textbf{Step 5}.
				Now, fix $a=1,\dots,n$. We take $(y_k)_k\subseteq u(G)$, with
				\begin{equation}
					\frac{|y_k|}{r_k}\to1\quad\text{and}\quad\frac{y_k}{|y_k|}\rightarrow e_a,
				\end{equation} 
			 which we can do  as $0$ is a density point for $u(G)$. Setting $\bar y_k\defeq u^{-1}(y_k)$, by \eqref{cinquepuntotre} we have 
			 \begin{equation}
			 	0<\liminf_k\frac{d(\bar y_k,\bar x)}{r_k}\le\limsup_k\frac{d(\bar y_k,\bar x)}{r_k} <\infty,
			 \end{equation}
			 so that we can extract a (not relabeled) subsequence with $\bar y_k\rightarrow \bar y\in\RR^n\setminus\{0\}$, in $(\ZZ,d_\ZZ)$.
			 We then see that 
			 \begin{equation}
			 	\lim_{k\to\infty} r_k^{-1} u'\circ u ^{-1}(y_k)=\lim_{k\to\infty} r_k^{-1} u'(\bar y_k)=L' \bar y=L'\circ L^{-1}(L\bar y).
			 \end{equation}
			 But, similarly,
			 \begin{equation}
			 L\bar y=	\lim_{k\to\infty} r_k^{-1} u(\bar y_k)=\lim_{k\to\infty} r_k^{-1} y_k=e_a,
			 \end{equation}
			 so that $d (u'\circ u^{-1})=L'\circ L^{-1}$,  and we conclude \eqref{claim} by \eqref{venjoa}.
				\end{proof}

\subsection{Proof of Theorem \ref{brfvaed}}

Let $\delta<\delta(n)$ to be fixed later. Notice that we can replace $A$ by $A\cap A_{\delta/4}(\XX)$, getting an open set which is an orientable topological manifold and whose complement has vanishing $\HH^{n-1}$-measure. By definition of $A_\delta(\XX)$,  for every $p\in A$, there exists $r_p\in (0,\delta/4)$ with $d_{GH}(B_{4r_p}(p),B_{4r_p}(0^n))<\delta r_p$.
	
 Assume that $\delta<\delta(n)$ to have the claims of Proposition \ref{brgvfsd} (in particular the scale-invariant version of Theorem \ref{prel2}). More precisely, these statements depend also on a parameters $\epsilon,\eta$, which we fix accordingly, depending only upon $n$.
 
 For every $p\in A$, consider  $u:B_{3r_p}(p)\rightarrow\RR^n$  the $\epsilon$-splitting maps given by (the scale-invariant version of) Theorem \ref{prel2}. Consider the $n$-forms $\nu \defeq du_1\wedge\cdots\wedge d u_n:B_{r_p}(p)\rightarrow\RR^n$ and $\omega_{B_{r_p/4}(p)} \defeq \frac{\nu}{|\nu|}$ (which is well defined by Theorem \ref{prel2}). As $A$ is an oriented topological manifold, up to replacing $u_1$ with $-u_1$, we can assume $\deg(u)=1$. Then, define the  form $\omega_X$ on $X$
 by letting $\omega_X\defeq\omega_{B_{r_p/4}(p)}$ $\HH^n$-a.e.\ on $B_{r_p/4}(p)$. We have to prove that this definition is well-posed. 
Take $B_{r'}(p')$ with $p'\in A$ and $r'\defeq r_{p'}\in (0,\delta/4)$ as above. Then, consider the form $\omega_{B_{r'/4}(p')}$ as above. We need to check that $\omega_{B_{r_p/4}(p)} = \omega_{B_{r'/4}(p')}$ $\HH^n$-a.e.\ on $B_{r_p/4}(p)\cap B_{r'/4}(p')$. This, however, follows from Proposition \ref{brgvfsd}, as $\deg(u'\circ u^{-1})=1$.

Now we want to prove regularity for $\omega_\XX$. Item (1) follows from Proposition \ref{sdfdascasdca} with Lemma \ref{sobolevext}. We turn to item (2). Fix $\eta\in \mathrm{TestForms}_{n-1}(\XX)$ with compact support, say $\supp \eta\subseteq B_R(o)$. For $\sigma>0$, consider $\varphi_\sigma$ as in the proof of Lemma \ref{sobolevext}. Notice that still $\varphi_\sigma\eta\in D(d)$, with $d(\varphi_\sigma \eta)=d \varphi_\sigma \wedge\eta + \varphi_\sigma \, d\eta$. Hence, by \eqref{vfsdaca}, it suffices to prove 
		\begin{equation}
			\int_\XX \omega_X\,\cdot\, d(\varphi_\sigma\eta)\,\dd\HH^n=0\quad\text{for every }\sigma\in (0,1).
		\end{equation}
  Then, by a partition of unity argument, we see that there is no loss of generality in assuming that $\supp\eta\subseteq B_{r_p/4}(p)$, for some $B_{r_p}(p)$ as above,  so that the conclusion is due to Proposition \ref{sdfdascasdca} with an approximation argument.\qed	

	\subsection{Proof of Theorem \ref{vfedacadc}}
		Take $\epsilon < \epsilon(n)$ and $\delta < \delta(\epsilon,n)$, to be fixed later, hence $\delta<\delta(n)$.
		As already recalled in the proof of Theorem \ref{brfvaed}, for every  $x\in A_{\delta/4}(\XX)$, there exists $r_x\in (0,\delta/4)$ such that $d_{GH}(B_{4r_x}(x), B_{4r_x}(0^n))<\delta r_x$.
		%Set $A\defeq \bigcup_{x\notin S^{n-1}} B_{r_x/4}(x)$. Clearly, $A$ is open, and, by Theorem \ref{prel2}, it is a toplogical manifold. Also, by the $\HH^n$-a.e.\ connectedness of $\mathcal{R}$, $A$ is connected. By construction, $\dim_\HH(\XX\setminus A)\le \dim_\HH(S^{n-1})\le n-2$. 
		
		We have to prove that $A_{\delta/4}(\XX)$ is orientable. Take $x\in A_{\delta/4}(\XX)$ and $r_x\in (0,\delta/4)$ as above, so that by Theorem \ref{prel2} we have an $\epsilon$-splitting map $u=(u_1,\dots,u_n):B_{3r_x}(x)\rightarrow\RR^n$. By Theorem \ref{prel2}, $B_{r_x}(x)$ is orientable.  By Proposition \ref{bgfvsdca}, if $\epsilon<\epsilon(n)$ and $\delta<\delta(\epsilon,n)$,
		\begin{equation}\label{bbfdssadc}
		\omega = c \frac{\nu}{|\nu|}=c\omega_{B_{r_x/4}(x)}\quad\HH^n\text{-a.e.\ on $B_{r_x/4}(x)$},
		\end{equation}
		where $\nu\defeq du_1\wedge\cdots\wedge d u_n$, and where $c$ might a priori depend on $x$.  But this implies that $|\omega|$ is locally constant on the connected set $A_{\delta/4}(\XX)$, so that $|\omega|=c$ holds $\HH^n$-a.e.\ on $\XX$.
        %As $\omega\ne 0$, we have $c\ne 0$.
		Up to replacing  $u_1$ with $-u_1$, we can assume that $c> 0$.
		Consider then, as chart for $B_{r_x/4}(x)$,  the map $u$. 
		Take $x,y\in A_{\delta/4}$ and $r_x,r_y$, and assume that $B_{r_x}(x)\cap B_{r_y}(y)\ne\emptyset$.   Let $u_x,u_y$ be the maps as above. By Proposition \ref{brgvfsd}, thanks to the choice of the sign of $u_{x,1}, u_{y,1}$, we have that $\deg(u_y\circ u_x^{-1})=1$, so that we see that the atlas for $A_{\delta/4}$ is oriented.
	\qed

\subsection{Proof of Remark \ref{remmodulo}}
We sketch how we need to modify the arguments of our proofs in order to obtain the claim. First, we replace (1) of Proposition \ref{bgfvsdca} with 
\begin{itemize}
    \item [(1')] $|\omega(x)|\ge c>0$ for $\HH^n$-a.e. $x\in B_{1/4}(p)$. Moreover, $ \omega\,\cdot\,\eta\in W^{1,2}(B_{1/4}(p))$  for every $\eta\in\mathrm{TestForms}_{n}(\XX)$ with $\supp\eta\subseteq B_{1/4}(x)$. 
\end{itemize}
The conclusion is then that \begin{equation}%\label{bbfdssadc1}
    \omega = f\frac{du_1\wedge\cdots\wedge du_n}{|du_1\wedge\cdots\wedge du_n|}=f\omega_{B_{1/4}(p)}\quad\HH^n\text{-a.e.\ on }B_{1/4}(p),
\end{equation}
    where either $f\ge c$ $\HH^n$-a.e.\ on $B_{1/4}(p)$ or $f\le-c$ $\HH^n$-a.e.\ on $B_{1/4}(p)$.
This is proved similarly as for \textbf{Step 1} of the proof of Proposition \ref{bgfvsdca}: we still have that $f\in W^{1,4/3}(B)$, for any ball $B\Subset B_{1/4}(p)$, as well as $|f|\ge c>0$ $\HH^n$.a.e.\ on $B$.
This forces %that either $f\ge c$ $\HH^n$.a.e.\ on $B$ or $f\le -c$ $\HH^n$.a.e.\ on $B$
the claim, by well-known properties of Sobolev functions. Indeed, considering $g:=\max\{f,-c\}$ and $h:=\min\{f,c\}$,
%\defeq( f\vee -c)\wedge c=c\chi_{\{f\ge c\}}-c\chi_{\{f\le -c\}}$, as $g\in W^{1,10/7}(B)$
since $h\in W^{1,4/3}(B)$ and $h$ has only finitely many values (namely, $\pm c$), we see that $h$ must be constant.

Then, the proof of Theorem \ref{vfedacadc} can be followed almost verbatim. Indeed, keeping the same notation, we have that \eqref{bbfdssadc} has to be replaced by 	
\begin{equation}
\omega=f\frac{\nu}{|\nu|}=\omega_{B_{r_x/4}(x)}\quad\HH^n\text{-a.e.\ on $B_{r_x/4}(x)$},
\end{equation}
where either $f\ge c$ $\HH^n$-a.e.\ or $f\le -c$ $\HH^n$-a.e. Then, up to replacing $u_1$ with $-u_1$, we see that we can assume that $f\ge c$ holds $\HH^n$-a.e.\ on $B_{r_x/4}(x)$, so that, as in the proof of Theorem \ref{vfedacadc}, the conclusion follows from Proposition \ref{brgvfsd}.
\qed    

\subsection{Proof of Theorem \ref{uniqueness}}
Let $\delta<\delta(n)$ to be fixed later.
By Theorem \ref{vfedacadc}, $A_{\delta/4}(\XX)$ is orientable, so that we can apply Theorem \ref{brfvaed} to obtain  an orientation form $\omega_\XX\in L^\infty(\Lambda^n T^{*}\XX)$. 
As in the proof of Theorem \ref{vfedacadc}, for every $x\in A_{\delta/4}(\XX)$ there exists $r_x\in (0,\delta/4)$ such that $d_{GH}(B_{4r_x}(x), B_{4r_x}(0^n))<\delta r_x$. Again, as in the proof of Theorem \ref{vfedacadc}, we obtain an $\epsilon$-splitting map $u=(u_1,\dots,u_n):B_{3r_x}(x)\rightarrow\RR^n$, by Theorem \ref{prel2}. By Proposition \ref{bgfvsdca}, if $\epsilon<\epsilon(n)$ and $\delta<\delta(\epsilon,n)$, for $\nu \defeq du_1\wedge\cdots\wedge d u_n$,
\begin{equation}%\label{bbfdssadc1}
\omega_\XX = c_\XX\frac{\nu}{|\nu|}=c_X\omega_{B_{r_x/4}(x)}\quad\HH^n\text{-a.e.\ on $B_{r_x/4}(x)$},
\end{equation}
and, for $i=1,2$,
\begin{equation}\label{bbfdssadc2}
\omega_i=c_i\frac{\nu}{|\nu|}=c_i\omega_{B_{r_x/4}(x)}\quad\HH^n\text{-a.e.\ on $B_{r_x/4}(x)$},
\end{equation}
for $c_\XX,c_1,c_2\in\RR\setminus\{0\}$ which may depend on $x$. 
  As $A_{\delta/4}(\XX)$ is connected, we have
  \begin{equation}
      \omega_i=\frac{c_i}{c_\XX}\omega_\XX\quad\HH^n\text{-a.e.\ on $B_{r_x/4}(x)$},
  \end{equation}
  and the last ratio must be constant on $A_{\delta/4}$ (as it is locally constant).
  This concludes the proof.
\qed

				\section{Metric Currents}\label{sectioncurrents}
				In this section, we are going to work with an $\RCD(K,N)$ space $(\XX,d,\mass)$. We will need the machinery of currents on metric spaces.

	As $\mass$ can be not finite, we consider local metric currents (but we still denote them as metric currents), as defined in \cite{lan11}, by modification of \cite{AmbrosioKirchheim00}.
	We then recall that $\mathcal{D}^k(\XX)\defeq \LIP_c(\XX)\times\LIP_{\mathrm{loc}}(\XX)^k$, for $k\in\NN$,
    is in some sense their pre-dual. As $(\XX,d)$ is proper, in practice it will be equivalent to consider $\mathcal{D}^k\defeq \LIP_c(\XX)^{k+1}$.
				
	For $\omega\in L^0(\Lambda^k T^*\XX)$, we will consider both the usual pointwise norm $|\omega|$ and the pointwise co-mass norm $|\omega|_c$, defined as
				\begin{equation}
					|\omega|_c\defeq\esssup\{\omega\,\cdot\,\tau_1\wedge\dots \wedge \tau_k \mid \tau_1,\dots,\tau_k\in L^0( T^*\XX) \text{ and }|\tau_1\wedge\dots\wedge  \tau_k|\le 1\ \mass\text{-a.e.}\}.
				\end{equation} 
    
    \begin{lem}\label{lemmanormaforme}
                    With the notation above, we have that
            \begin{equation}\label{lemmanormaforme1}
					|\omega|_c\le |\omega|\le {n \choose k} |\omega|_c\qquad\mass\text{-a.e.}
				\end{equation}
                and\begin{equation}\label{lemmanormaforme2}
					|\omega|_c=\esssup\{\omega\,\cdot\,d\pi_1\wedge\cdots \wedge d\pi_k \mid \pi_1,\dots, \pi_k\text{ are $1$-Lipschitz}\}\quad\mass\text{-a.e.}
				\end{equation} 
    \end{lem}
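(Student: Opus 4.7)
The plan is to reduce both items to linear algebra performed in a local orthonormal frame of $T^*\XX$, supplied by the RCD structure (for instance by applying Gram--Schmidt to a splitting map as in Proposition~\ref{integrability}).

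For item (1), the inequality $|\omega|_c\le|\omega|$ is immediate from pointwise Cauchy--Schwarz in $\Lambda^k T^*\XX$: for any admissible simple form $\tau_1\wedge\cdots\wedge\tau_k$ one has $\omega\,\cdot\,\tau_1\wedge\cdots\wedge\tau_k\le|\omega||\tau_1\wedge\cdots\wedge\tau_k|\le|\omega|$ $\mass$-a.e., so taking $\esssup$ in $(\tau_i)$ gives the bound. For the reverse, I would pick a measurable local orthonormal frame $(e_1,\ldots,e_n)$ of $T^*\XX$, which exists $\mass$-a.e.\ on each regular ball by Gram--Schmidt on the splitting map of Proposition~\ref{integrability}, then patched together via a countable covering. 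Expanding $\omega=\sum_{|I|=k}\omega_I\,e_I$ with $e_I=e_{i_1}\wedge\cdots\wedge e_{i_k}$, one has $|\omega|^2=\sum_I\omega_I^2$, and since $|e_I|=1$ each coefficient satisfies $|\omega_I|=|\omega\,\cdot\,e_I|\le|\omega|_c$. Summing the $\binom{n}{k}$ terms gives the strictly stronger bound $|\omega|^2\le\binom{n}{k}|\omega|_c^2$, which implies \eqref{lemmanormaforme1}.

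For item (2), the ``$\ge$'' direction of \eqref{lemmanormaforme2} is elementary: if $\pi_i$ is 1-Lipschitz, then $|d\pi_i|\le 1$ $\mass$-a.e., and Hadamard's inequality in $\Lambda^k T^*\XX$ yields $|d\pi_1\wedge\cdots\wedge d\pi_k|\le 1$, so such simple forms are admissible competitors in the definition of $|\omega|_c$.

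The hard step is the reverse inequality in (2): approximating an arbitrary admissible $(\tau_1,\ldots,\tau_k)$ by $(d\pi_1,\ldots,d\pi_k)$ with 1-Lipschitz $\pi_i$. By pointwise Gram--Schmidt I would first reduce to the case of pointwise-orthonormal $\tau_i$ with $|\tau_1\wedge\cdots\wedge\tau_k|=1$ a.e. Working on a regular ball equipped with a splitting map $u=(u_1,\ldots,u_n)$ and the orthonormal frame $(e_j)$ of Proposition~\ref{integrability}, one writes $\tau_i=\sum_j a_{ij}e_j$ with $A=(a_{ij})$ having orthonormal rows, approximates $A$ in $L^0$ by piecewise-constant matrices $A^{(m)}$ of operator norm $\le 1$ on small patches where the distortion between $(e_j)$ and the orthonormalised $(du_j)$ is controlled by $\epsilon$, and sets $\pi_i^{(m)}\defeq(1-\epsilon)^{1/k}\sum_j a_{ij}^{(m)}u_j$, multiplied by a cut-off. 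The scaling absorbs the frame distortion and gives $|\nabla\pi_i^{(m)}|\le 1$ $\mass$-a.e.; since $\XX$ is a length space, this upgrades to a 1-Lipschitz representative. Letting $m\to\infty$ and $\epsilon\to 0$ yields $\omega\,\cdot\,d\pi_1^{(m)}\wedge\cdots\wedge d\pi_k^{(m)}\to\omega\,\cdot\,\tau_1\wedge\cdots\wedge\tau_k$ in $L^0$ on each patch, which suffices at the level of $\esssup$. The main obstacle is precisely preserving the 1-Lipschitz constraint despite the local frame $(e_j)$ not being itself a frame of differentials of Lipschitz functions; the scaling by $(1-\epsilon)^{1/k}$ combined with the sharp transformation and distortion estimates in Proposition~\ref{integrability} is what makes this feasible.
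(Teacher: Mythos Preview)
Your treatment of \eqref{lemmanormaforme1} and of the inequality $(\ge)$ in \eqref{lemmanormaforme2} is correct and essentially matches the paper, which dispatches both in one line by invoking the existence of local orthonormal bases for $L^0(T^*\XX)$.

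For the hard direction $(\le)$ of \eqref{lemmanormaforme2}, however, there is a genuine gap. You set $\pi_i^{(m)}=(1-\epsilon)^{1/k}\sum_j a_{ij}^{(m)}u_j$ and assert $|\nabla\pi_i^{(m)}|\le1$ $\mass$-a.e., but this does not follow from the properties of an $\epsilon$-splitting map. The definition only guarantees $|\nabla u_a|\le 1+\epsilon$ pointwise and $\dashint|\nabla u_a\cdot\nabla u_b-\delta_{a,b}|<\epsilon$ in average; even on the good set $G$ of Theorem~\ref{prel2} the pointwise control of the Gram matrix holds only after passing to Lebesgue points, and $\HH^n(B_1(p)\setminus G)>0$ in general. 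Off $G$ one has no better bound than $|\sum_j a_{ij}^{(m)}\nabla u_j|\lesssim\sqrt n$, so your scaling cannot force $|\nabla\pi_i^{(m)}|\le1$ $\mass$-a.e., and Sobolev-to-Lipschitz therefore does not produce a $1$-Lipschitz representative. The cut-off you multiply by only makes matters worse, and piecing together functions built from piecewise-constant $A^{(m)}$ over different patches does not yield a single Lipschitz function without further gluing, which again injects gradient. Finally, your argument is routed through Proposition~\ref{integrability}, which is proved only for non-collapsed spaces on $\delta$-regular balls, whereas the lemma is stated for arbitrary $\RCD(K,N)$ spaces.

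The paper avoids all of this by a different mechanism. A partitioning argument (using that differentials of Lipschitz functions generate $L^0(T^*\XX)$) first reduces $|\omega|_c$ to an $\esssup$ over Lipschitz $\pi_i$ with $|d\pi_i|\le1$ $\mass$-a.e.\ on some Borel $B$. Then one regularizes by the heat flow: the Bakry--\'Emery estimate $|d(h_t\pi_i)|\le e^{-Kt}h_t|d\pi_i|$ gives a \emph{continuous} upper bound on the gradient, so on a subset $B'\subseteq B$ of nearly full measure and for small $t$ one gets $|d(h_t\pi_i)|\le 1+\eta$ in a genuine neighbourhood of each point of $B'$. The local Sobolev-to-Lipschitz property then makes $h_t\pi_i$ honestly $(1+\eta)$-Lipschitz on small balls, and after rescaling and McShane extension one obtains $1$-Lipschitz competitors whose differentials agree with $(1+\eta)^{-1}d\pi_i^t\to d\pi_i$ on $B'$. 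This route needs no splitting maps and no control of any Gram matrix, which is why it works in the stated generality.
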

				
	The next statement establishes a correspondence between forms and currents of bounded weight.
    
			\begin{prop}\label{formsandcurrents}
					Let $(\XX, d,\mass)$ be an $\RCD(K,N)$ space of essential dimension $n$ and let $k\in\{1,\dots,n\}$. 			
					Every $\omega \in L^\infty(\Lambda^k T^*\XX)$  induces a metric $k$-current $T_\omega$  by
					\begin{equation}\label{vefadcsa}
						\mathcal{D}^k(\XX)\ni (f,\pi_1,\dots,\pi_k)\mapsto \int f\omega\,\cdot\, d\pi_1\wedge\cdots\wedge d\pi_k,
					\end{equation}
					and $|T_\omega|=|\omega|_c$ $\mass$-a.e. 
					Conversely, every $k$-metric current $T$ with $|T|\in L^\infty$ is induced by a (unique) $k$-form $\omega_T \in L^\infty(\Lambda^k T^*\XX)$ through \eqref{vefadcsa}.
				\end{prop}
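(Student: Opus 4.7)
The plan splits along the two directions of the biconditional, with the backward direction (recovering $\omega_T$ from $T$) being the more delicate one.

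For the forward direction, I would define $T_\omega$ by the stated formula and check the axioms of a local metric current in the sense of Lang: multilinearity, the alternating property in the $\pi_i$, joint continuity under pointwise convergence with uniform Lipschitz bounds, locality, and finite mass. The algebraic properties are inherited directly from $d$ and $\wedge$ in Gigli's differential calculus; locality follows because $d\pi_i=0$ $\mass$-a.e.\ wherever $\pi_i$ is locally constant; continuity reduces to standard weak convergence of differentials for uniformly Lipschitz, pointwise converging sequences in $\RCD$ spaces. The mass bound $|T_\omega|\le |\omega|_c\,\mass$ is immediate from the pointwise comass estimate
\[
    |\omega\,\cdot\, d\pi_1\wedge\cdots\wedge d\pi_k|\le |\omega|_c\,|d\pi_1\wedge\cdots\wedge d\pi_k|\le |\omega|_c\prod_{i=1}^k \mathrm{Lip}(\pi_i),
\]
which is essentially the definition of $|\omega|_c$.

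For the identity $|T_\omega|=|\omega|_c$ $\mass$-a.e., the inequality $\le$ has just been established. For the reverse, I would invoke equation \eqref{lemmanormaforme2} of Lemma \ref{lemmanormaforme}, which realizes $|\omega|_c$ as the essential supremum of $\omega\,\cdot\, d\pi_1\wedge\cdots\wedge d\pi_k$ over tuples of $1$-Lipschitz functions. A measurable selection of near-optimal $\pi_i$ on a fixed measurable set $E$, combined with testing $T_\omega$ against Lipschitz cut-offs $f\in\LIP_c$ approximating $\chi_E$, produces the matching lower bound on the mass density.

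For the converse, fix $\pi_1,\dots,\pi_k\in\LIP_c(\XX)$; the map $f\mapsto T(f,\pi_1,\dots,\pi_k)$ is a continuous linear functional on $L^1(\XX,\mass)$ with norm at most $\||T|\|_{L^\infty}\prod_i\mathrm{Lip}(\pi_i)$, hence is represented by integration against a unique $\Psi(\pi_1,\dots,\pi_k)\in L^\infty(\XX)$. The main obstacle is to show that $\Psi(\pi_1,\dots,\pi_k)(x)$ depends, for $\mass$-a.e.\ $x$, only on the tuple $(d\pi_1(x),\dots,d\pi_k(x))\in (T^*_x\XX)^k$. I would first upgrade $\Psi$ to an $L^0$-multilinear and alternating map in the slots $d\pi_i$, using locality of $T$ together with the Leibniz rule $d(\psi\pi_i)=\psi\, d\pi_i+\pi_i\, d\psi$ (applied for $\psi\in\LIP$ and extended by approximation) to reduce the dependence on $\pi_i$ to the dependence on $d\pi_i$. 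Since differentials of $\LIP_c$ functions generate the cotangent module $L^0(T^*\XX)$ in Gigli's sense, the $L^0$-multilinearity then forces the claimed pointwise dependence on $(d\pi_i(x))$. The universal property of the exterior power of the cotangent module produces the unique $\omega_T\in L^0(\Lambda^k T^*\XX)$ with $T_{\omega_T}=T$, and the uniform bound on $\Psi$ upgrades it to $\omega_T\in L^\infty$.
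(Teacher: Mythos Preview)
Your outline has the right architecture, but two steps that you label ``standard'' are precisely where the work lies.

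\textbf{Continuity of $T_\omega$.} If $\pi_i^j\to\pi_i$ pointwise with a uniform Lipschitz bound, you only get $d\pi_i^j\rightharpoonup d\pi_i$ weakly in $L^2_{\mathrm{loc}}$; the integrand $f\,\omega\,\cdot\, d\pi_1^j\wedge\cdots\wedge d\pi_k^j$ is a \emph{product} of $k$ weakly converging factors, and such products need not converge. The paper does not appeal to any off-the-shelf weak convergence: it telescopes, approximates $f\omega$ in $L^1$ by a compactly supported $\tilde\omega\in\mathrm{TestForms}_k(\XX)$, and integrates by parts via $\delta\tilde\omega$ (Lemma~\ref{vefdaa}) so that the bad factor $d(\pi_i-\pi_i^j)$ becomes the strongly convergent $\pi_i-\pi_i^j$. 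Your sentence ``continuity reduces to standard weak convergence of differentials'' hides exactly this.

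\textbf{Pointwise dependence on $d\pi_i$ in the converse.} The Leibniz rule plus current-locality do show that $\Psi$ is well defined on \emph{exact} $1$-forms $d\pi$ (if $d\rho=0$ $\mass$-a.e.\ then $\rho$ is locally constant by Sobolev-to-Lipschitz, and current-locality kills the slot). But they do \emph{not} give $L^\infty(\mass)$-linearity in each slot, which is what you need to extend from $\{d\pi:\pi\in\LIP_c\}$ to the full module $L^0(T^*\XX)$ and invoke the universal property of $\Lambda^k$. Concretely, $\chi_E\,d\pi$ is not exact, so to make sense of $\tilde\Psi(\chi_E\,d\pi,\dots)$ you must extend by continuity, and for that you need the pointwise bound
\[
|\Psi(\pi_1,\dots,\pi_k)|\le \||T|\|_{L^\infty}\,|d\pi_1|\cdots|d\pi_k|\quad\mass\text{-a.e.}
\]
Your a priori estimate only gives the global constant $\||T|\|_{L^\infty}\prod_i\mathrm{Lip}(\pi_i)$, which is useless for this purpose. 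The paper's Step~1 establishes exactly the pointwise bound, by heat-regularizing $\pi_i^t\defeq h_t\pi_i$, using $|d\pi_i^t|\le e^{-Kt}h_t|d\pi_i|$ together with the local Sobolev-to-Lipschitz property to control the \emph{local} Lipschitz constant of $\pi_i^t$ near a Lebesgue point of $|d\pi_i|$, and then letting $t\searrow 0$ via the continuity axiom. Without this step your $\tilde\Psi$ is only $\RR$-linear on exact forms, and the passage to an $L^\infty$-module map---hence to $\omega_T$---does not go through.
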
	

        Finally, the next two results show that the form is co-closed precisely when the current has no boundary.

    	\begin{thm}\label{maincurr}
					Let $(\XX,d,\mass)$ be an $\RCD(K,N)$ space of essential dimension $n$. Let $\omega\in L^\infty(\Lambda^kT^*\XX)$ satisfying
		\begin{equation}
			\int_\XX \omega\,\cdot\,d\eta\,\dd\mass=0\quad\text{for every  $\eta\in\mathrm{TestForms}_{k-1}(\XX)$ with compact support}.
		\end{equation}
		
		Then the current $T_\omega$ induced as in Proposition \ref{formsandcurrents} has no boundary.
				\end{thm}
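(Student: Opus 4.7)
To prove $\partial T_\omega=0$, I first unfold the boundary condition into an integral identity involving $\omega$. Fix $(f,\pi_1,\dots,\pi_{k-1})\in \mathcal{D}^{k-1}(\XX)$ and a cutoff $\chi\in \LIP_c(\XX)$ with $\chi\equiv 1$ on a neighborhood of $\supp f$. The definition of boundary for local metric currents in the sense of \cite{lan11}, together with Proposition \ref{formsandcurrents}, yields
\begin{equation*}
\partial T_\omega(f,\pi_1,\dots,\pi_{k-1})=T_\omega(\chi,f,\pi_1,\dots,\pi_{k-1})=\int_\XX \chi\omega\,\cdot\,df\wedge d\pi_1\wedge\cdots\wedge d\pi_{k-1}\,\dd\mass.
\end{equation*}
By locality of the differential, $df=0$ $\mass$-a.e.\ outside $\supp f$, so the integrand is concentrated where $\chi\equiv 1$, and the task reduces to showing that $\int_\XX \omega\,\cdot\,df\wedge d\pi_1\wedge\cdots\wedge d\pi_{k-1}\,\dd\mass=0$.

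The plan is to approximate $f$ and the $\pi_i$ by compactly supported test functions and then invoke the hypothesis. Fix another cutoff $\chi'\in \mathrm{Test}(\XX)\cap \LIP_c(\XX)$ with $\chi'\equiv 1$ on a neighborhood of $\supp f$ (such cutoffs exist by \cite[Lemma 3.1]{Mondino-Naber14}). By Leibniz and locality, replacing each $\pi_i$ with $\chi'\pi_i$ does not change the integral while producing compactly supported Lipschitz functions. I then regularize via heat flow: for any $g\in \LIP_c(\XX)\subseteq W^{1,2}\cap L^\infty(\XX)$, the approximant $\heat_t g$ lies in $\mathrm{Test}(\XX)$ for $t>0$, converges to $g$ strongly in $W^{1,2}$ as $t\to 0^+$, and has Lipschitz constant uniformly bounded in $t$ by Bakry--\'Emery. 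Multiplying once more by $\chi'$ restores compact support while preserving the strong $W^{1,2}$-convergence, producing approximants $f_t,\pi_{1,t},\dots,\pi_{k-1,t}\in \mathrm{Test}(\XX)$ with supports in a fixed compact set and uniformly bounded Lipschitz constants.

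For each $t>0$, the form $\eta_t\defeq f_t\,d\pi_{1,t}\wedge\cdots\wedge d\pi_{k-1,t}$ lies in $\mathrm{TestForms}_{k-1}(\XX)$ and has compact support, so the hypothesis gives
\begin{equation*}
0=\int_\XX \omega\,\cdot\,d\eta_t\,\dd\mass=\int_\XX \omega\,\cdot\,df_t\wedge d\pi_{1,t}\wedge\cdots\wedge d\pi_{k-1,t}\,\dd\mass,
\end{equation*}
and it remains to pass to the limit $t\to 0^+$.

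The main obstacle is this final passage to the limit, as $\omega$ is only $L^\infty$. The key point is that the approximants have uniformly bounded Lipschitz constants, so that $|df_t|$ and $|d\pi_{i,t}|$ are equibounded in $L^\infty$ with supports in a fixed compact set $K$. Combined with strong $L^2(K)$ convergence of the differentials, this provides (along a common subsequence) $\mass$-a.e.\ convergence together with a uniform $L^\infty$ majorant on $K$. The algebraic wedge product is therefore continuous in this regime, and the integrand converges in $L^1(K)$ by dominated convergence, using the pointwise bound $|\omega\,\cdot\,\alpha|\le C(n,k)|\omega|_c|\alpha|$ coming from Lemma \ref{lemmanormaforme}. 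This yields the desired vanishing and concludes the proof.
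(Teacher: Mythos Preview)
Your proof is correct and follows essentially the same approach as the paper: reduce $\partial T_\omega(f,\pi_1,\dots,\pi_{k-1})$ to the integral $\int_\XX \omega\cdot d(f\,d\pi_1\wedge\cdots\wedge d\pi_{k-1})\,\dd\mass$ and then invoke the hypothesis via approximation. The paper simply writes ``which vanishes, thanks to the assumption, by an approximation argument'' at that point, whereas you spell out the approximation explicitly (cutoff to get compact support, heat-flow regularization to land in $\mathrm{TestF}(\XX)$, and dominated convergence using the uniform Lipschitz bounds); this is a correct and standard way to fill in that one-line gap.
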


    \begin{thm}\label{bgrvfdcassc}
							Let $(\XX,d,\mass)$ be an $\RCD(K,N)$ space of essential dimension $n$. Let $T$ be a metric $k$-current with $|T|\in L^\infty$ and assume that $T$ has no boundary. Then,  if $\omega_T\in L^\infty(\Lambda^k T^*\XX)$ is the $k$-form induced as in Proposition \ref{formsandcurrents}, we have
									\begin{equation}
								\int_\XX \omega\,\cdot\,d\eta\,\dd\mass=0\quad\text{for every  $\eta\in\mathrm{TestForms}_{k-1}(\XX)$ with compact support}.
							\end{equation}
				\end{thm}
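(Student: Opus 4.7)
My plan is to verify the identity directly by combining Proposition \ref{formsandcurrents} with the local definition of the boundary operator in the sense of \cite{lan11}: concretely, $\int_\XX\omega_T\,\cdot\,d\eta\,\dd\mass$ is, up to an irrelevant cutoff, the evaluation of $T$ on a tuple lying in the domain of $\partial T$, so the hypothesis $\partial T=0$ closes the argument.

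First, I would reduce to an elementary $\eta$. Any $\eta\in\mathrm{TestForms}_{k-1}(\XX)$ with compact support can be written, by linearity and after multiplication by a cutoff $\chi\in\LIP_c(\XX)$ that equals $1$ on $\supp\eta$ (available by properness of $\XX$), as a finite sum of forms $f_0\,df_1\wedge\cdots\wedge df_{k-1}$ with $f_0\in\mathrm{Test}(\XX)\cap\LIP_c(\XX)$ and $f_1,\dots,f_{k-1}\in\mathrm{Test}(\XX)$. Hence it suffices to prove the identity for such an elementary $\eta$, for which $d\eta=df_0\wedge df_1\wedge\cdots\wedge df_{k-1}$. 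I would then invoke Proposition \ref{formsandcurrents}, which gives
\begin{equation*}
T(\sigma,f_0,f_1,\dots,f_{k-1})=\int_\XX \sigma\,\omega_T\,\cdot\, df_0\wedge df_1\wedge\cdots\wedge df_{k-1}\,\dd\mass
\end{equation*}
for every $\sigma\in\LIP_c(\XX)$; choosing $\sigma\equiv 1$ on a neighborhood of $\supp f_0$ (hence on $\supp df_0$) and exploiting $\omega_T\in L^\infty$, the right-hand side equals $\int_\XX\omega_T\,\cdot\,d\eta\,\dd\mass$.

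The conclusion is then immediate: by the local definition of the boundary in \cite{lan11},
\begin{equation*}
\partial T(f_0,f_1,\dots,f_{k-1})=T(\sigma,f_0,f_1,\dots,f_{k-1}),
\end{equation*}
which is independent of the admissible $\sigma$ thanks to the locality axiom, and which is well posed because $(f_0,f_1,\dots,f_{k-1})\in\mathcal{D}^{k-1}(\XX)$ since $f_0\in\LIP_c(\XX)$. The hypothesis $\partial T=0$ therefore forces $T(\sigma,f_0,f_1,\dots,f_{k-1})=0$, and hence $\int_\XX\omega_T\,\cdot\,d\eta\,\dd\mass=0$.

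The main point, rather than a real obstacle, is the careful bookkeeping between Proposition \ref{formsandcurrents} (stated for arbitrary Lipschitz arguments) and the test functions appearing inside $\eta$: test functions belong to $\LIP_{\mathrm{loc}}(\XX)$, so the representation formula applies verbatim, but one must check that the resulting tuple lies in $\mathcal{D}^{k-1}(\XX)$, which is precisely why the preliminary multiplication by $\chi$ is indispensable.
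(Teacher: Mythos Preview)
Your proposal is correct and follows essentially the same approach as the paper: reduce by linearity to an elementary test form $\eta=f_0\,df_1\wedge\cdots\wedge df_{k-1}$, insert a cutoff $\sigma\in\LIP_c(\XX)$ equal to $1$ near $\supp f_0$, identify $\int_\XX\omega_T\cdot d\eta\,\dd\mass$ with $T(\sigma,f_0,\dots,f_{k-1})$ via Proposition \ref{formsandcurrents}, and conclude from $\partial T=0$. Your extra care about verifying $(f_0,\dots,f_{k-1})\in\mathcal{D}^{k-1}(\XX)$ is welcome but not strictly needed beyond noting that test functions are Lipschitz and $f_0$ can be taken compactly supported.
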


				\subsection{Proof of Lemma \ref{lemmanormaforme}}
                
                First, \eqref{lemmanormaforme1} follows from the existence of local bases for $L^0(T^*\XX)$.  The inequality $(\ge)$ in \eqref{lemmanormaforme2} is trivial, so we only have to prove $(\le)$.
                Notice first that a partitioning argument shows that 
                \begin{equation}%\label{lemmanormaforme2}
					|\omega|_c=\esssup\{\chi_B\omega\,\cdot\,d\pi_1\wedge\cdots \wedge d\pi_k\mid\pi_1,\dots ,\pi_k\in\LIP(\XX)\text{ with $|d\pi_1|,\dots,|d\pi_k|\le 1\ \mass$-a.e.\ on $B$}\}.
				\end{equation}  
                Fix then $\pi_1,\dots,\pi_k\in \LIP(\XX)$ and let $B$ be such that $|d\pi_1|,\dots,|d\pi_k|\le 1\ \mass$-a.e.\ on $B$. We want to show that $\chi_B\omega\,\cdot\,d\pi_1\wedge\cdots \wedge d\pi_k$ is bounded above by the right-hand-side of \eqref{lemmanormaforme2}.
                
                We can assume that $B$ is compact and that $\pi_1,\dots,\pi_k$ have compact support.
                Define $\pi_i^t\defeq h_t \pi_i$ and recall that $|d \pi_i^t|\le e^{-Kt} h_t|d \pi_i|\ \mass$-a.e.\ for every $i=1,\dots, k$. Also, for any $\eta\in (0,1)$, there exists $\eta'\in (0,1)$ and a compact  subset $B'\subseteq B$ with $\mass(B\setminus B')<\eta$ such that, for $t\in (0,\eta')$, we have
                $$e^{-Kt}h_t| d\pi_i|\le (1+\eta)\quad \text{$\mass$-a.e.\ on $B'$},$$
                for every $i=1,\dots,k$.
                For  $\mass$-a.e.\ $x\in B'$, if $t\in (0,\eta')$ and if $r$ is small enough, $\pi_i^t$ is $(1+\eta)$-Lipschitz on $B_r(x)$, for $i=1,\dots,k$. Indeed, we recall that $h_t |d \pi_i|$ is a continuous function, and that the local version of the Sobolev-to-Lipschitz property holds on $\RCD(K,N)$ spaces. This easily yields the conclusion.
                
				\subsection{Proof of Proposition \ref{formsandcurrents}}
					Take $\omega$ as in the statement. We verify that $T$ is a current as in \cite{lan11}. Multilinearity is clear, as well as locality. We thus have to check only the continuity axiom. Notice first that $|T_\omega|= |\omega|_c\in L^\infty$, by the previous lemma.

					Fix then $(f^j)\subseteq\LIP_c(\XX)$, $f\in \LIP_c(\XX)$ with $f^j\rightarrow f$ uniformly and such that $\supp f^j\subseteq K$ for a common compact set $K$, and $k$ sequences $(\pi_i^j)_j$, for $i=1,\dots,k$, where $\pi^j_i$ is $C$-Lipschitz (for some fixed $C$) for every $j\in\NN$ and $i=1,\dots,k$, and $\pi_i^j\rightarrow\pi_i$ pointwise on $\XX$.
					We have to prove that
					\begin{equation}
						\mathcal{E}^j\defeq  \int f\omega\,\cdot\, d\pi_1\wedge\cdots\wedge d\pi_k-\int f^j\omega\,\cdot\, d\pi_1^j\wedge\cdots\wedge d\pi_k^j\rightarrow 0.
					\end{equation}  
					As 
					\begin{equation}
						\Big|\int f^j\omega\,\cdot\, d\pi_1^j\wedge\cdots\wedge d\pi_k^j-\int f\omega\,\cdot\, d\pi_1^j\wedge\cdots\wedge d\pi_k^j\Big|\le C^k\|f-f^j\|_{L^1(|\omega|_c\mass\mressmall K)},
					\end{equation}
					we see that there is no loss of generality in assuming $f^j=f$ for every $j$.
					
					We notice that $	\mathcal{E}^j=	\mathcal {T}^j_1+\cdots+	\mathcal {T}^j_k$, where		
					\begin{equation}
						\mathcal {T}^j_i\defeq\int f\omega\,\cdot\, d\pi_1^j\wedge\cdots\wedge d\pi_{i-1}^j\wedge d(\pi_i -\pi_i^j)\wedge d\pi_{i+1}\wedge \cdots\wedge d\pi_k,
					\end{equation}
					so that we focus on a fixed $i=1,\dots,k$ and we prove that $	\mathcal {T}^j_i\rightarrow 0$.
					We now take $\tilde \omega\in \mathrm{TestForm}_k(\XX)$ with compact support such that $\|f\omega-\tilde\omega\|_{L^1}<\epsilon$. 
					We then use an immediate approximation argument to integrate by parts in
					\begin{equation}
						\begin{split}
							{\mathcal{\tilde T}}^j_i&\defeq	\int \tilde \omega \,\cdot\, d\pi_1^j\wedge\cdots\wedge d\pi_{i-1}^j\wedge d(\pi_i -\pi_i^j)\wedge d\pi_{i+1}\wedge \cdots\wedge d\pi_k \\&=(-1)^{i}\int \delta\tilde\omega\,\cdot\,( \pi_i-\pi_i^j) \, d\pi_1^j\wedge\cdots\wedge d\pi_{i-1}^j\wedge d\pi_{i+1}\wedge \cdots\wedge d\pi_k,
						\end{split}
					\end{equation}
					and we notice that the right-hand side converges to $0$ as $j\rightarrow\infty$.
					Hence, we have
                    $$\limsup_{j\to\infty} |\mathcal{T}_i^j|\le 		\limsup_{j\to\infty} |\mathcal{T}_i^j-	\mathcal{\tilde T}_i^j|+	\limsup_{j\to\infty} |	\mathcal{\tilde T}_i^j|\le \epsilon\cdot C^k,$$
                    so that, as $\epsilon$ was arbitrary, we conclude the first part.
                    Now we prove the other implication. We divide the proof in three steps.
					%\medskip\\
                    
                    \textbf{Step 1}.
					Letting $(f,\pi_1,\dots,\pi_k)\in\mathcal{D}^k(\XX)$, we show that 
					\begin{equation}
						T(f,\pi_1,\dots,\pi_k)\le \int |f| |d\pi_1| \cdots |d\pi_k| \, \dd|T|.
					\end{equation}
					By the locality axiom for currents, we see that we can assume that $\pi_1,\dots,\pi_k$ have compact support. Define $\pi_i^t\defeq h_t \pi_i$ and recall that $|d \pi_i^t|\le e^{-Kt} h_t|d \pi_i|$ holds $\mass$-a.e.\ for every $i=1,\dots, k$.
					Notice that it is enough to show, for $t\in (0,1)$, that
					\begin{equation}\label{vefadcs}
						T(f,\pi_1^t,\dots,\pi_k^t)\le e^{-Kkt} \int_K |f| (h_t|d \pi_1|) \cdots (h_t|d \pi_k|) \dd|T|,
					\end{equation}	 
					and then let $t\searrow 0$,	using the continuity axiom. 
					
					Therefore, we focus on \eqref{vefadcs}. 
					First, notice that 
					$
					T(\,\cdot\,,\pi_1^t,\dots,\pi_k^t)
					$ induces an $L^\infty$ function $g$ with  $g\le |T|\in L^\infty$.  Take a Lebesgue point $x$ for $g$ and $|T|$, and fix $\epsilon\in (0,1)$. Now, if $r$ is small enough, $\pi_i^t$ is $[\epsilon+e^{-Kt}(h_t |d \pi_i|)(x)]$-Lipschitz on $B_r(x)$, for $i=1,\dots,k$. Indeed, we recall that $h_t |d \pi_i|$ is a continuous function, and that the local version of the Sobolev-to-Lipschitz property holds on $\RCD(K,N)$ spaces. By the locality axiom for currents, we have
					$$
					\Big|\int_{B_r(x)}g\,\dd\mass\Big|=|T(\chi_{B_r(x)},\pi_1^t,\dots,\pi_k^t)|\le \int_{B_r(x)} \prod_{i=1}^k (\epsilon+ e^{-Kt}(h_t |d \pi_i|)(x))\,\dd |T|,
					$$
					so that, once we divide both sides by $\mass(B_r(x))$,  let $r\searrow 0$, and then $\epsilon\searrow 0$, we see that
					\begin{equation}
						|g|(x)\le |T|(x)e^{-Kkt}\prod_{i=1}^k (h_t |d \pi_i|)(x),
					\end{equation} 
					whence the claim, by integration.%\medskip\\
                    
					\textbf{Step 2}. Now take  $u_1,\dots,u_n\in\LIP(\XX)$ Lipschitz and $(c_i^j)_{i=1,\dots,k;\, j=1,\dots,n}\subseteq\RR$. For every choice of functions $(f,\pi_1,\dots,\pi_k)\in\mathcal{D}^k(\XX)$, we can write
					\begin{equation}
						T(f,\pi_1,\dots,\pi_k)=T\Big(f,\sum_{j=1}^n c_1^j u_j,\dots,\sum_{j=1}^n c_k^j u_j\Big)+\mathcal{R}_1+\cdots+\mathcal{R}_k, 
					\end{equation}
					where, for $i=1,\dots,k$,
					\begin{equation}\label{brgvfsad}
						\mathcal{R}_i=
						T\Big(f,\sum_{j=1}^n c_1^j u_j,\dots, \sum_{j=1}^n c_{i-1}^j u_j,\Big( \pi_i-\sum_{j=1}^n c_{i}^j u_j \Big),\pi_{i+1},\dots,\pi_k\Big)
					\end{equation}
					and hence
					\begin{equation}
						|		\mathcal{R}_i|\le 	\int |f| \Big |\sum_{j=1}^n c_1^j d u_j\Big| \cdots \Big|d \pi_i-\sum_{j=1}^n c_{i}^j d u_j\Big|\cdots |d \pi_k|\,\dd|T|,
					\end{equation}
					thanks to \textbf{Step 1}.
					Now, by multilinearity, 
					$$
					T\Big(f,\sum_{j=1}^n c_1^j u_j,\dots,\sum_{j=1}^n c_k^j  u_j\Big)= \sum_{1\le i_1,\dots,i_k\le n} T(f c_1^{i_1}\cdots c_k^{i_k}  ,u_{i_1},\dots,u_{i_k}).
					$$
					All in all, we have
					$$
					\Big|	T(f,\pi_1,\dots,\pi_k)- \sum_{1\le i_1,\dots,i_k\le n} T(f c_1^{i_1}\cdots c_k^{i_k}, u_{i_1},\dots,u_{i_k})\Big|\le |\mathcal{R}_1|+\cdots  |\mathcal{R}_k|.
					$$
					By a partitioning and approximation argument, we see that the above continues to holds even in the case in which $(c_i^j)_{i=1,\dots,k; \, j=1,\dots,n}\subseteq L^\infty$. 
					
					Now, take $(f_l,\pi_{l,1},\dots,\pi_{l,k})_{l}\subseteq\mathcal{D}^k(\XX)$ and $(c_{l,i}^j)_{i=1,\dots,k;\, j=1,\dots,n;\,l}\subseteq L^\infty$ be finitely many elements indexed by $l$. Summing the above equation,
                    we obtain
					\begin{equation*}
						\Big| \sum_{l} 	T(f_l,\pi_{l,1},\dots,\pi_{l,k})-\sum_l  \sum_{1\le i_1,\dots,i_k\le n} T(f_lc_{l,1}^{i_1}\cdots c_{l,k}^{i_k},u_{i_1},\dots,u_{i_k} )\Big|\le  \sum_l [|\mathcal{R}_{l,1}|+\cdots+|\mathcal{R}_{l,1}|]\eqdef\mathcal{R},
					\end{equation*}		
					with the obvious meaning for $\mathcal{R}_{l,1},\dots, \mathcal{R}_{l,k}$, as in \eqref{brgvfsad}. Now, let $I_1,\dots I_{c_{k,n}}$ be the increasing multi-indices for $k$ elements over $n$ (hence $c_{k,n}={n\choose k}$), so that
					\begin{equation}
						\Big| \sum_{l} 	T(f_l,\pi_{l,1},\dots,\pi_{l,k})-\sum_l \sum_{I_h}\sum_{\{i_1,\dots,i_k\}=I_h} T(f_l \sign((i_1,\dots,i_k), I_h)c_{l,1}^{i_1}\cdots c_{l,k}^{i_k},u_{I_h} )\Big|\le \mathcal{R}
					\end{equation}
					where, if $(\sigma(i_1),\dots,\sigma(i_k))=I_h$, $u_{I_h}$ stands for $u_{\sigma(i_1)},\dots,u_{\sigma(i_k)}$ and $\sign((i_1,\dots,i_k), I_h)$ means the sign of the permutation $\sigma$.  
					By recalling \textbf{Step 1}, we see that
					\begin{equation*}
						\Big| \sum_{l} 	T(f_l,\pi_{l,1},\dots,\pi_{l,k})\Big|\le  \sum_{I_h}\sum_{\{i_1,\dots,i_k\}=I_h} \int \Big|\sum_l f_l \sign((i_1,\dots,i_k), I_h)c_{l,1}^{i_1}\cdots c_{l,k}^{i_k}\Big|\prod_{j\in I_h}|d u_j|\,\dd|T|+ \mathcal{R}.
					\end{equation*}
					
					Now we consider the following form with compact support, say $\supp\eta\subseteq K$, with $K$
                    independent of $l$:
					\begin{equation}
						\eta\defeq \sum_l f_l \, d\pi_{l,1}\wedge \cdots \wedge d\pi_{l,k}.
					\end{equation} 
					%and we see that, if we choose $c_{l,i}^j\defeq d \pi_{l,i}\,\cdot\,d u_j\in L^\infty$ for every $l,i,j$, we have
					%$$
					%\Big| \sum_{l} 	T(f_l,\pi_{l,1},\dots,\pi_{l,k})\Big|\le  \sum_{I_h}\int \Big|\eta\,\cdot\, \bigwedge_{j\in I_h}d u_j\Big|\prod_{j\in I_h}|d u_j|\,\dd|T|+\mathcal{R}.
					%$$
					By a further partitioning and approximation argument, we see that we can take, in place of the vector fields $(d u_j)_j$, an orthonormal basis $(e_j)_j$.
                    Taking
                    $$c_{l,i}^j\defeq d \pi_{l,i}\,\cdot\,e_j\in L^\infty$$
                    for every $l,i,j$, 
                    we see that in this case the remainder $\mathcal{R}$ vanishes.
					Hence,
					\begin{equation}\label{vfdacs}
						\Big| \sum_{l} 	T(f_l,\pi_{l,1},\dots,\pi_{l,k})\Big|\le  \sum_{I_h}\int \Big|\eta\,\cdot\, \bigwedge_{j\in I_h}e_j\Big|\,\dd|T|\le c_{n,k}\int |\eta|\,\dd|T|\le c_{n,k}\sqrt{|T|(K)}\|\eta\|_{L^2(|T|)}.
					\end{equation}
                    
					\textbf{Step 3}. By \eqref{vfdacs}, we have that, for every $K\subseteq\XX$ compact, the assignment 
					\begin{equation}
						\big\{\eta\in\mathrm{TestForms}_k(\XX)\,:\,\supp\eta\subseteq K\big\}\ni \sum_l f_l \, d\pi_{l,1}\wedge \cdots \wedge d\pi_{l,k}\mapsto \sum_l T(f_l\chi_K,\pi_{l,1},\dots,\pi_{l,k})
					\end{equation}
					is represented by $\omega_{K,T}\in L^2(\Lambda^k T^*\XX)$.  It is also easy to show that $|\omega_{K,T}|_c=|T|$ holds $\mass$-a.e.\ on $K$. We can now obtain $\omega_T$ by considering $\omega_{\bar B_R(o),T}$, for some fixed $o\in\XX$, and letting $R\rightarrow\infty$.
				\qed

			\subsection{Proof of Theorem \ref{maincurr}}
Letting $(f,\pi_1,\dots,\pi_{k-1})\in\mathcal{D}^{k-1}(\XX)$, if $\sigma\in \LIP_c(\XX)$ is such that $\sigma=1$ on $\supp f$,
by definition of boundary we then have
\begin{equation}
\begin{split}
		\partial T_\omega(f,\pi_1,\dots,\pi_{k-1})&=T_\omega(\sigma,f,\pi_1,\dots,\pi_{k-1})=\int_\XX \omega\,\cdot\,\sigma \, df\wedge d\pi_1\wedge\cdots\wedge d\pi_{k-1}\,\dd\mass\\
		&=\int_\XX \omega\,\cdot\, d(f  d\pi_1\wedge\cdots\wedge d\pi_{k-1})\,\dd\mass,
\end{split}
\end{equation}
which vanishes, thanks to the assumption, by an approximation argument. 
			\qed

				\subsection{Proof of Theorem \ref{bgrvfdcassc}}
					By linearity, we can just take $\eta= f_0\,df_1\wedge\cdots\wedge df_{k-1}$ with $f_0,\dots,f_{k-1}\in\mathrm{TestF}(\XX)$ with compact support, so that, if $\sigma\in \LIP_c(\XX)$ is such that $\sigma=1$ on $\supp f_0$, we get
					\begin{equation}
						\int_\XX \omega\,\cdot\,d\eta\,\dd\mass=	\int_\XX \omega\,\cdot\,\sigma \, df_0\wedge df_1\wedge\cdots\wedge df_{k-1}\,\dd\mass=T_\omega(\sigma,f_0,\dots,f_{k-1}).
					\end{equation}
                    Since the latter equals $\partial T(f_0,f_1,\dots,f_{k-1})=0$, this
					concludes the proof.
				\qed

	 \bibliographystyle{abbrv}

\end{document}